\documentclass[final]{siamltex} 
\usepackage{amsmath,amssymb}
\usepackage{color}
\usepackage{enumerate}
\usepackage{multirow}
\usepackage{booktabs}
\usepackage{array}
\usepackage{pstricks}
\usepackage{euscript}
\usepackage{epic,eepic}
\usepackage{graphicx}

\newcommand{\menge}[2]{\big\{{#1} \mid {#2}\big\}}

\newcommand{\emp}{\ensuremath{{\varnothing}}}

\newcommand{\scal}[2]{\left\langle{#1},{#2} \right\rangle}

\newcommand{\HH}{\ensuremath{\mathcal H}}

\newcommand{\EE}{\ensuremath{\boldsymbol{E}}}
\newcommand{\E}{\ensuremath{\mathbb{E}}}

\newcommand{\RR}{\ensuremath{\mathbb R}}

\newcommand{\RP}{\ensuremath{\left[0,+\infty\right[}}
\newcommand{\RPP}{\ensuremath{\,\left]0,+\infty\right[}}

\newcommand{\NN}{\ensuremath{\mathbb N}}

\newcommand{\dom}{\ensuremath{\operatorname{dom}}}

\newcommand{\prox}{\ensuremath{\operatorname{prox}}}

\newcommand{\Argmin}{\ensuremath{\operatorname{Argmin}}}
\newcommand{\argmin}{\ensuremath{\operatorname{argmin}}}

\newcommand{\zz}{\ensuremath{\boldsymbol{z}}}

\newcommand{\EEE}{\ensuremath{\boldsymbol{E}}}

\newcommand{\weakly}{\ensuremath{\rightharpoonup}}

\newcommand{\pinf}{\ensuremath{+\infty}}

\newcommand{\obj}{T}
\newcommand{\smo}{L}
\newcommand{\nsm}{R}
\newcommand{\X}{\mathcal{X}}
\newcommand{\Y}{\mathcal{Y}}

\PassOptionsToPackage{normalem}{ulem}

\newtheorem{condition}[theorem]{Condition}
\newtheorem{problem}[theorem]{Problem}
\newtheorem{example}{Example}
\newtheorem{remark}{Remark}
\newtheorem{algo}[theorem]{Algorithm}
\newtheorem{assumption}[theorem]{Assumption}

\title{Convergence of Stochastic Proximal Gradient  Algorithm}
 \author{Lorenzo Rosasco  $^\dag$\thanks{
       DIBRIS, Universit\`a di Genova,
       Via Dodecaneso, 35,
       16146, Genova, Italy, ({\tt lrosasco@mit.edu})}   
 \and
Silvia Villa $^\dag$ \and B$\grave{\text{\u{a}}}$ng C\^ong V\~u  
\thanks{
 LCSL, Istituto Italiano di Tecnologia
       and Massachusetts Institute of Technology,
       Bldg. 46-5155, 77 Massachusetts Avenue, Cambridge, MA 02139, USA, ({\tt Silvia.Villa@iit.it, Cong.Bang@iit.it})} 
}

\begin{document}
\maketitle
\frenchspacing

\begin{abstract}
We prove novel  convergence results for a stochastic proximal gradient algorithm 
suitable for solving a large class of convex optimization problems, where a convex objective function  is given by the sum of a smooth and a possibly non-smooth component.
We consider the iterates convergence and   derive $O(1/n)$ non asymptotic bounds in expectation in the strongly convex case, as well as  almost sure convergence results  under weaker assumptions. Our approach allows to avoid averaging and  weaken boundedness assumptions which are often considered in theoretical studies and might not be satisfied in practice. 
\end{abstract}

\begin{keywords}
  Proximal Methods, Forward-backward splitting algorithm, Stochastic optimization, Online Learning Algorithms.
\end{keywords}

\maketitle

\section{Introduction}

First order methods  have recently been widely applied 
to solve convex optimization problems in a variety of areas including 
machine learning  and signal processing. 
In particular, proximal
gradient algorithms (a.k.a. forward-backward splitting algorithms) and their accelerated variants have received considerable attention (see \cite{livre1,siam05,nesterov07,beck09} 
and references therein). 
These algorithms are easy to implement and suitable for solving high dimensional  problems 
 thanks to the low  memory requirement of each iteration. Moreover, they are particularly suitable for composite optimization, that is 
 when a convex objective function  is  the sum  of  a smooth and a non-smooth component.  
This class of optimization problems arises naturally in  regularization schemes where one component is a data fitting term  and the other a regularizer, see for example \cite{siam07,MRSVV10}. 
Interestingly,    proximal  splitting algorithms separate the contribution of each   component  at every iteration:
 the proximal operator defined by the non smooth term is applied to a  gradient descent step for  the smooth term.  In practice it is often relevant to consider situations where the latter operation cannot be perfomed {\em exactly}. For example the case where the proximal operator is known only up-to an error have been considered  in \cite{Roc76,siam05,SRB11,VilSal13}. 

In this paper we are interested in the complementary situation where it is the gradient 
of the smooth term to be know up-to an error. More precisely, we consider the case where 
only  stochastic estimates of the gradient are available and develop stochastic versions of proximal splitting methods.  This latter situation is particularly relevant in statistical learning, where we have to minimize  an expected objective function from random samples. In this context, iterative algorithms, where only one gradient estimate is used in each step, are often referred to as online learning algorithms. More generally, the situation where only stochastic gradient estimates are available is important in stochastic optimization, where iterative algorithms can be seen as  a form of stochastic approximation.  
Finally, stochastic gradient approaches are considered in the incremental optimization of an objective function which is the sum of many terms, e.g. the empirical risk in machine learning   \cite{Bert11}, see Section \ref{ss:amm} for a detailed discussion.  In the next section we describe our contribution in the context of the state of the art.

\subsection{Contribution and Previous Work}

The study of  stochastic approximation methods originates in the classical work of \cite{RobMon51}, 
and assumes the objective function to be smooth and strongly convex;  
the related literature is vast (see e.g. \cite{Ben90,Nem09,De11} and references therein). An improvement of the original 
stochastic approximation method, based on averaging of the trajectories and larger step-sizes, is proposed by \cite{Nem83} 
and \cite{PolJud92}. 
More recently, one  can recognize  two main approaches to solve
general nonsmooth convex  stochastic optimization. The first one uses different versions of mirror descent 
stochastic approximation, based on projected subgradient averaging techniques \cite{Jud11,Nem09,Lan09,ShaZha12}.  
Similar methods have been extensively studied also  in the machine learning community in the context of online learning, where the proof of convergence of the average of the iterates is often based  on regret analysis and, the so called, online-to-batch conversion \cite{Zin03,HazAgaKal07,hazan,RakShaSri12}.   
The second line of research is based on stochastic  variants of accelerated proximal   gradient descent \cite{Hu,Ghadimi12,Jud14,bottou2005line,Salev07,Shalev08,zhang2008multi}.

The algorithm we consider is also a stochastic extension of proximal gradient descent, but 
corresponds to its basic version with no acceleration. Indeed, as discussed below, a main question we consider  is  if  accelerated methods yield any advantage in the stochastic case.  
The FOBOS algorithm  in  \cite{Duchi09} is   the closest approach to the one we consider,  the main two differences being 
1) we consider an additional relaxation step which may lead to accelerations, and especially 2) 
we do not  consider averaging of the iterates. This latter point is important, since   averaging can have a detrimental effect. Indeed, non-smooth problems often arise in applications where sparsity of the solution is  of interest, and  it is easy to see that averaging prevent the solution to be sparse \cite{LinChePen14,Xiao}.  Moreover, as noted in \cite{RakShaSri12} and \cite{ShaZha12}, averaging can have a  negative impact on the convergence rate in the strongly convex case. Indeed,  in this paper we improve the error bound  in \cite{Duchi09} in this latter case.

Our study is developed in  an infinite dimensional setting, where we  focus on almost sure 
convergence of the iterates and non asymptotic bounds on their expectation.
Considering iterates convergence  is standard in optimization theory and  often considered in machine learning when sparsity based learning  is studied \cite{BulVan11}. The theoretical analysis in the paper is divided in two parts. In the first, we study convergence in  expectation in the strongly convex case, generalizing the results in \cite[Section 3]{bach} to  the nonsmooth case. We provide  a non-asymptotic analysis of stochastic proximal gradient descent where the bounds depend explicitly on the  parameters of the problem.  Interestingly,  we obtain, in the strongly convex case,  the same   $O(1/n)$ error bound   that can be obtained from the optimal rate of convergence for function values as achieved by accelerated methods, see e.g. \cite{Ghadimi12}.  This result  (confirmed by  numerical simulations) suggests that, unlike in the deterministic setting, in stochastic optimization acceleration does not have an impact on the rate of convergence.    In the second part,  we establish  almost sure convergence. Our results generalize  to the composite case the analysis of the stochastic  projected subgradient algorithm in a Hilbert space \cite{Barty07} (see also \cite{Bennar07,Monnez06}).  
Our analysis is  based on a novel extension of the analysis  of proximal methods with exact gradient,  based on considering  random quasi-Fej\'er sequences \cite{Ermol68}.  This approach allows to consider assumptions on the stochastic estimates of the gradients which are 
more general than those considered in previous work, and does not require  boundedness of the iterates. 

We  note that  a recent technical report \cite{AtcForMou14} also analyzes a stochastic proximal gradient method  (without the relaxation step) and its accelerated variant. Almost sure convergence of the iterates (without averaging) is proved under uniqueness of the minimizer, but under  assumptions different from ours:  continuity of  the objective function-- thus excluding constrained smooth optimization-- and boundedness the iterates. Convergence rates for the iterates without averaging are derived, but only  for the accelerated method.
Finally, we note that convergence of the iterates of stochastic proximal gradient has been recently  obtained from the  analysis of convergence of stochastic fixed point algorithms presented in the recent preprint \cite{ComPes14}. However, this latter results is derived from  summability assumptions on the errors  of  the stochastic estimates which are usually not satisfied in the machine learning setting.

The paper is organized as follows. In Section \ref{sec:psae} we introduce composite optimization and 
the stochastic proximal gradient algorithm, along with some relevant special cases.  
In Section \ref{s:alconv}, we study convergence in expectation and almost surely that we prove in  Section \ref{sec:proofs}.
Section \ref{s:exp} describes some numerical tests comparing the stochastic projected gradient algorithm with state of the
art stochastic first order methods. 
The proofs of auxiliary results are found in  Appendix \ref{s:bg}.

\paragraph{\bf Notation and basic definitions}
Throughout, $(\boldsymbol{E, \mathcal{A},P})$ is a probability space,
$\NN^* = \NN\backslash\{0\}$, and 
 $\HH$ is  a real separable Hilbert space.
 We use the notation
$\scal{\cdot}{\cdot}$ and $\|\cdot\|$ for the scalar product and the associated norm
in $\HH$. The symbols $\weakly$ and $\to$ denote, respectively, 
 weak and strong convergence. 
The class of lower semicontinuous convex functions  
$f\colon\HH\to\left]-\infty,+\infty\right]$ such 
that $\dom f=\menge{x\in\HH}{f(x) < +\infty}\neq\emp$, 
is denoted by $\Gamma_0(\HH)$. The proximity operator of $f\in\Gamma_0(\HH)$ is 
 \begin{equation}
\label{e:prox}
\prox_f\colon\HH\to\HH,\quad \prox_f(w)=\underset{v\in\HH}{\argmin}\: f(v) + \frac12\|w-v\|^2.
\end{equation}
Throughout this paper, we assume implicitly that the closed-form expressions of the 
proximity operators to be  available. 
We refer to \cite{livre1,Com11} for the closed-form expression of a wide class of 
functions, see \cite{MRSVV10} for examples in machine learning.
 Given  a random variable $X$, we denote by $\E[X]$ its expected value, and by
$\sigma(X)$ the  $\sigma$-field generated by $X$. The conditional expectation
of $X$ given a $\sigma$-algebra $\mathcal{A}\subset \boldsymbol{\mathcal{A}}$ is denoted by 
$\E[X|\mathcal{A}]$. The conditional expectation of $X$ given $Y$ is denoted by $\E[X|Y]$.
A filtration of  $\boldsymbol{\mathcal{A}}$ is an increasing sequence $(\mathcal{A}_n)_{n\in\NN^*}$
 of sub-$\sigma$-algebras of $\boldsymbol{\mathcal{A}}$.
A  $\HH$-valued random process is a sequence of random variables $(X_n)_{n\in\NN^*}$ taking values in $\HH$.
The shorthand notation `a.s.' stands for  `almost sure'.    
\section{Problem setting and examples}\label{sec:psae}
In this section, we  introduce the composite convex optimization problem, the stochastic proximal method we study,  
and discuss some  special cases of the framework we consider. 
\subsection{Problem}
Composite optimization problems are defined as 
the  problem of minimizing  the sum of a smooth convex function and a possibly nonsmooth convex function.
Here we assume that the latter is proximable, that is the proximity operator \eqref{e:prox} is available in closed form or can be easily computed.

\begin{problem}
\label{prob2}
Let $\nsm\in \Gamma_0(\HH)$, let $\beta \in ]0,\infty[$, and 
let $\smo\colon\HH\to\RR $ be convex and differentiable,  
with a $\beta$-Lipschitz continuous gradient.
The problem is to
\begin{equation}
\label{e:prob2}
  \underset{w\in\HH}{\text{minimize}}\; \obj(w)=\smo(w)+\nsm(w),
\end{equation}
under the assumption that the set of solutions to \eqref{e:prob2} is non-empty.
\end{problem} 
As  mentioned in the introduction, problems with this composite structure has 
been recently extensively studied in convex 
optimization. In particular, the class of splitting methods, which decouple the contribution of the
smooth term and the nonsmooth one, received a lot of attention  \cite{livre1}.
Within the class of splitting methods, in this paper we study the following stochastic 
proximal gradient (SPG) algorithm.

\begin{algo}[SPG] 
\label{a:main}
Let $(\gamma_n)_{n\in\NN^*}$ be a strictly positive sequence, 
let $(\lambda_n)_{n\in\NN^*}$ be a sequence in $\left[0,1\right]$, and
let $(\mathrm{G}_n)_{n\in\NN^{*}}$ be a $\HH$-valued random process such that
$(\forall n\in\NN^*)$ $\E[\|\mathrm{G}_n\|^2]<+\infty$. 
 Fix $w_1$ a $\HH$-valued integrable vector with $\E[\|w_1\|^2] < +\infty$ and set
\begin{equation}
\label{e:main1}
(\forall n\in \NN^{*})\quad
\begin{array}{l}
\left\lfloor
\begin{array}{l}
z_n = w_n- \gamma_n \mathrm{G}_n\\
y_{n} = \prox_{\gamma_n \nsm}z_n\\
w_{n+1} = (1-\lambda_n) w_{n} + \lambda_ny_{n}.\\
\end{array}
\right.\\[2mm]
\end{array}
\end{equation}
\end{algo}
Algorithm \ref{a:main} is  a stochastic version of the proximal 
forward-backward splitting  \cite{siam05}, where
we replace  the exact gradient by a stochastic element.  More specifically, 
if, for every $n\in\NN^*$, $\mathrm{G}_n = \nabla\smo(w_n)$, our algorithm  reduces to 
the one in \cite{siam05}.
A stochastic proximal forward-backward splitting (FOBOS)
was firstly proposed in \cite{Duchi09} for minimizing 
the sum of two functions where one of them is proximable, 
and the other is convex and subdifferentiable.
Algorithm \ref{a:main}   generalizes  the FOBOS  algorithm,  by including a relaxation step, while assuming   the first component in \eqref{e:prob2} to be smooth.
As it is the standard, to ensure convergence of the  proposed algorithm,
we need  additional conditions on 
the random process $(\mathrm{G}_n)_{n\in\NN^*}$ as well as on the sequence of step-sizes
$(\gamma_n)_{n\in\NN^*}$.

\begin{condition}
The following conditions will be considered for the
 filtration $(\mathcal{A}_n)_{n\in\NN^*}$ with 
$\mathcal{A}_n = \sigma(w_1,\ldots,w_n)$.
\begin{enumerate}
\item[(A1)] 
\label{e:csss1}
 For every $n\in\NN^*$, $\E\left[\mathrm{G}_n|\mathcal{A}_n \right] = \nabla \smo(w_n)$.
\item[(A2)] 
\label{e:csss2}
 For every $n\in\NN^*$, there exist $\sigma\in\left]0,+\infty\right[$ and $\alpha_n \in \left]0,+\infty\right[$ such that
\begin{equation}
\E\left[\|\mathrm{G}_n -\nabla \smo(w_n)\|^2|\mathcal{A}_n\right] 
\leq \sigma^2(1+ \alpha_n\|\nabla \smo(w_n)\|^2 )
\end{equation}
\item[(A3)]  There exists $\epsilon\in\left]0,+\infty\right[$ such that 
$(\forall n\in\NN^*)\;  0<\gamma_n\leq \dfrac{1-\epsilon}{\beta( 1 + 2\sigma^2\alpha_n)} $ 
\item[(A4)] For any solution $\overline{w}$ of the problem \eqref{e:prob2}, set $(\forall n\in\NN^*)\;\chi^{2}_n =
\lambda_n\gamma_{n}^2\big(1 +2\alpha_{n}\|\nabla \smo(\overline{w})\|^2\big)$. Assume that
\begin{equation}
\label{e:suma}
 \sum_{n\in\NN^{*}}\lambda_n\gamma_n = +\infty
\quad \text{and}\quad 
 \sum_{n\in\NN^{*}}\chi^{2}_n < +\infty.
\end{equation}
\end{enumerate}
\end{condition}
Condition (A1) means that,  at each iteration $n$, $\mathrm{G}_n$ is an unbiased estimate of the gradient of  the smooth term. Condition (A2) has been 
considered in \cite{Barty07}.
It is weaker than typical conditions used in the analysis of 
stochastic (sub)gradient algorithms, namely boundedness of the sequence
$(\E[\|\mathrm{G}_n\|^2|\mathcal{A}_n])_{n\in\NN^*}$ (see \cite{Nem09}) or even boundedness 
of $(\|\mathrm{G}_n\|^2)_{n\in\NN^*}$ (see \cite{Duchi09}). 
We note that this last requirement on the entire space is not compatible with the assumption of strong convexity, 
because  the gradient is necessarily not uniformly bounded,  therefore the use of the more general condition (A2) is needed in this case.

Conditions such as (A3) and (A4) are, respectively, widely used in the 
deterministic setting  and in stochastic optimization. 
Assumption (A3) is more restrictive that the one usually assumed in the deterministic setting, that is $(\forall n\in\NN^*)\; \gamma_n\leq (2-\epsilon)/\beta$.
We also note that when $(\lambda_n)_{n\in\NN^*}$  
is bounded away from zero, and $(\alpha_n)_{n\in\NN^*}$ is bounded, 
(A4) implies (A3) for $n$ large enough.  
 The condition  $\sum_{n\in\NN^{*}}\chi^{2}_n<+\infty$ in  
Assumption $(A4)$ is  satisfied if $\big(\lambda_n\gamma_{n}^2\big(1 +2\alpha_{n}\big)\big)_{n\in\NN^{*}}$ 
is summable. 
Moreover, if $R=0$, it reduces to $\sum_{n\in\mathbb{N}^*}\lambda_n\gamma_{n}^2<+\infty$, 
since in this case $\nabla \smo(\overline{w})=0$ for every solution $\bar{w}$.
Finally, in our case, the step-size is required to converge to zero,
while it is typically bounded away from zero in the study of deterministic 
proximal forward-backward splitting algorithm \cite{siam05}.

\subsection{Special cases}

Problem \ref{prob2} covers a wide class of deterministic  as well as 
stochastic convex optimization problems, especially from machine  learning and signal processing, see e.g. \cite{siam05,siam07,Nem09,MRSVV10,glopridu} 
and  references therein.
The simplest case is when $\nsm$ is identically equal to 0, so that  
Problem \ref{prob2} reduces to the classic problem of 
finding a  minimizer of a convex differentiable  function from unbiased estimates of its gradients.
In the case when  $\nsm$ is the indicator function of a nonempty, convex, closed  set $C$, i.e. 
 \begin{equation}
\nsm(w) = \iota_C(w) 
= \begin{cases}
  0 &\text{$w\in C$},\notag\\
+\infty &\text{$w\not\in C$},
  \end{cases}
\end{equation}
then problem \eqref{e:prob2} reduces to a constrained minimization problem of the form 
\begin{equation*}
  \underset{w\in C}{\text{minimize}}\;  \smo(w),
\end{equation*}     
which is well studied in the literature, as mentioned in the introduction.
Below, we discuss in more detail some special cases of interest.

\begin{example}({\bf Minimization of an  Expectation}).\label{Ex:ExpMin}
 Let $\xi$ be a random vector with probability distribution $P$ supported on $\EEE$
and $F\colon\HH\times\EEE\to \RR$. Stochastic
gradient descent methods are usually studied in the case where $\HH$ is an euclidean space and  
\begin{equation*}
\smo(w) = \E\left[ F(w,\xi)\right] = \int_{\EE} F(w,\xi)dP(\xi),
\end{equation*}  
under the assumption that  $(\forall \xi \in \EEE)$ $F(\cdot,\xi)$ is  
a convex differentiable function with Lipschitz continuous gradient \cite{Nem09}. 
Let  $(\xi_n)_{n\in\NN^*}$ be independent copies of the random vector $\xi$. 
Assume that there is an oracle that, for each $(w,\xi) \in \HH\times \EE$, 
returns a vector $G(w,\xi)$ such that $\nabla \smo(w) = \E\left[G(w,\xi)\right]$.
By setting $(\forall n\in\NN^*)\; \mathrm{G}_n = G(w_n,\xi_n)$ and $\mathcal{A}_n=\sigma(\xi_1,\ldots,\xi_{n})$, 
then (A2) holds. This latter assertion follows from standard properties of conditional expectation, see e.g. \cite[Example 5.1.5]{Duret04}. 
\end{example}

\begin{example}({\bf Minimization of a Sum of Functions})\label{Ex:SumOfFunctions}
 Let $\nsm\in \Gamma_0(\HH)$,  let $m$ be a strictly positive integer. For every 
$i\in\{1,\ldots,m\}$, let $\smo_i$ be convex and differentiable, such that 
$\sum_{i=1}^m\smo_i$ has a $\beta$-Lipschitz continuous gradient,
for some $\beta\in \left]0,+\infty\right[$. The problem is to 
\begin{equation*}
  \underset{w\in \HH}{\text{minimize}}\;  \frac{1}{m}\sum_{i=1}^m\smo_i(w)+\nsm(w). 
\end{equation*}
This problem is a special case of Problem \ref{e:prob2} with $R = \sum_{i=1}^m \smo_i$, and
is especially of  interest when $m$ is very large and we know the exact gradient of each component $L_i$. The stochastic
 estimate of the gradient of $\smo$ is then defined as 
\begin{equation}
(\forall n\in\NN)\quad \mathrm{G}_n = \nabla \smo_{\operatorname{i}(n)}(w_n),
\end{equation}
where $(\operatorname{i}(n))_{n\in\NN^*}$ is a random process of independent random 
variables uniformly distributed on $\{1,\ldots,m\}$, see \cite{Bert97,Bert11}.
Clearly $(A1)$ holds. Assumption $(A2)$  specializes in this case to
\begin{equation}
(\forall n\in \NN^*)\quad \frac{1}{m}\sum_{i=1}^m\Big(\|\nabla L_i(w_n)-\frac{1}{m}\sum_{i=1}^m \nabla L_i(w_n)\|^2\Big)\leq\sigma^2(1+\alpha_n\|\nabla L(w_n)\|^2)\,.
\end{equation} 
If the latter is satisfied, then  SPG algorithm can be applied with a suitable choice of the 
stepsize.
\end{example}

Finally, in the next section,  we discuss how the above setting specializes to the context of machine learning. 
\subsection{Application to   Machine Learning}\label{ss:amm}

Consider  two measurable spaces $\X$ and $\Y$ and assume there is 
a probability measure $\rho$ on $\X\times \Y$. The measure $\rho$
is fixed but known only through a training set $\zz = (x_i,y_i)_{1\leq i\leq m} \in (\mathcal{X}\times\mathcal{Y})^m$ of 
samples i.i.d with respect to $\rho$.  Consider a loss function $\ell:\Y\times\Y\to \left[0,+\infty\right[$ and a hypothesis space $\HH$ 
of functions from $\X$ to $\Y$, e.g. a  reproducing kernel Hilbert space. A key problem in this context is   (regularized) empirical risk minimization, 
\begin{equation}
\label{e:cyc}
\underset{w\in \HH}{\text{minimize}}\;\frac{1}{m}\sum_{i=1}^m\ell(y_i,w(x_i))+
\nsm(w),
\end{equation}
The above problem can  be seen as an approximation of the  problem,
\begin{equation}
\label{e:learn1}
  \underset{w\in\HH}{\text{minimize}}\;   
  \int_{\X\times\Y} \ell(y,w(x))\,d\rho + \nsm(w).
\end{equation}
The analysis, in this paper, can be adapted to the machine learning setting in two different ways. The first, following 
Example \ref{Ex:ExpMin}, is to apply  the SPG algorithm to directly solve the regularized {\em expected}  loss 
minimization problem \eqref{e:learn1}. 
The second, following  Example \ref{Ex:SumOfFunctions}, is to apply  the SPG algorithm to  solve the regularized {\em empirical}  risk minimization problem \eqref{e:cyc}.

In either one of the above two  problems, the first term is differentiable if the loss functions is differentiable with respect to  
its second argument, examples being  the squared  or the logistic loss. For these latter  loss functions, and more generally 
for loss functions which are twice differentiable in their second argument,  it easy to see that the Lipschitz continuity of the 
gradient is satisfied if the maximum eigenvalue of the Hessian is bounded. The second term $R$ can be seen as a 
regularizer/penalty encoding some prior information about the learning problem. Examples of convex, non-differentiable penalties 
include sparsity inducing penalties such as the $\ell_1$ norm, as well as more complex structured sparsity penalties \cite{MRSVV10}. 
Stronger convexity properties can be obtained considering an {\em elastic net penalty} \cite{Zou05,DDR09}, that is adding  a small 
strongly convex term to the sparsity inducing penalty. Clearly, the latter term would not be necessary if the risk in Problem  
\ref{e:learn1} (or the empirical risk in \eqref{e:cyc}) is strongly convex. However, this latter requirement depends on the probability  
measure $\rho$ and is typically not satisfied  when considering  high  (possibly  infinite) dimensional settings. 

\section{Main results and discussion}\label{s:alconv}
In this section, we state and discuss the main results of the paper. 
We derive convergence rates of 
the proximal gradient algorithm (with relaxation) for stochastic minimization.
The section is divided in two parts. 
In the first one, Section \ref{sec:exp}, we focus on convergence in expectation.  
In the second one, Section \ref{sec:as}, we study almost sure convergence of the 
sequence of iterates. In both cases, additional convexity conditions on the objective function are  required 
to derive convergence results. The proofs are deferred to Section \ref{sec:proofs}.  
\subsection{Convergence in Expectation of SPG algorithm}\label{sec:exp}
In this section, we  denote by $\overline{w}$ a solution of Problem \ref{e:prob2}  and provide an explicit 
non-asymptotic bound on  $\E[\|w_n-\overline{w}\|^2]$. This result generalizes to the nonsmooth case
the bound obtained  in \cite[Theorem 1]{bach} for stochastic gradient descent. 
The following assumption is considered throughout  this section.

\begin{assumption}\label{A:strcon}
The function $\smo$ is $\mu$-strongly convex and $\nsm$ is $\nu$-strongly convex,
for some $\mu\in\left[0,+\infty\right[$ and $\nu\in\left[0,+\infty\right[$, with $\mu+\nu>0$. 
\end{assumption}
Note that, we do not assume  both $\smo$ and $\nsm$ to be strongly convex,
indeed the constants $\mu$ and $\nu$ can be zero, but  require  that only one of the two is.
This implies that $\overline{w}$ is the unique solution of Problem \ref{e:prob2}.

In the statement of the following theorem, we will use the
family of functions $(\varphi_c)_{c\in\RR}$ defined by setting, for every $c\in \RR$,
\begin{equation}
\label{e:bach1}
 \varphi_{c}\colon \left]0,\pinf\right[\to \RR\colon
t\mapsto 
\begin{cases}
(t^{c}-1)/c& \text{if $c \not=0$};\\
 \log t& \text{if $c =0$}.
\end{cases}
\end{equation}
This family of functions arises in Lemma \ref{l:ocs} in the Appendix and are useful to bound the sum of the
stepsizes. 
\begin{theorem}
\label{t:1}
Assume that conditions $(A1), (A2)$, $(A3)$ and Assumption \ref{A:strcon}  are satisfied. 
Suppose that there exist $\underline{\lambda}\in\left]0,+\infty\right[$ and $\overline{\alpha}\in\left]0,+\infty\right[$ 
such that 
\begin{equation}
\inf_{n\in\NN^*} \lambda_n \geq  \underline{\lambda}\quad\text{and}\quad\sup_{n\in\mathbb{N}^*}\alpha_n \leq \bar{\alpha}\,.
\end{equation}
Let $c_1\in\,]0,+\infty[$ and let $\theta \in \left]0,1\right]$. Suppose that, for every $n\in\NN$,  
$ \gamma_n =  c_1n^{-\theta}$. Set  
\begin{equation}
t = 1-2^{\theta-1}, \qquad c=\frac{2c_1\underline{\lambda}(\nu+\mu\varepsilon)}{(1+\nu)^2}, \quad and
\quad \tau=\frac{2\sigma^2c_1^2 (1+\overline{\alpha} \|\nabla \smo(\overline{w})\| )}{c^2 }.
\end{equation}
Let $n_0$ be the smallest integer such that $n_0>1$, and $\max\{c,c_1\} {n_0}^{-\theta}\leq 1.$
Then, by setting   
$$(\forall n\in\NN^{*})\quad\quad s_n = \E\left[\|w_{n}-\overline{w}\|^2\right],$$ 
we have, for every $n\geq 2n_0$,
\begin{equation}
\label{eq:Est1}
 s_{n+1} \leq 
\begin{cases}
\Big(\tau c^2 \varphi_{1-2\theta}(n)
 + s_{n_0}\exp\Big(\dfrac{cn_{0}}{1-\theta}\Big) \Big)
\exp\Big(\dfrac{-ct(n+1)^{1-\theta}}{1-\theta} \Big)
+ \dfrac{ 2^{\theta}\tau c}{(n-2)^{\theta}}
 &\text{if $\theta\in\left]0,1\right[$,}\\
 s_{n_0}\Big(\dfrac{n_0}{n+1}\Big)^{c}+ 
\dfrac{2^c\tau c^2 }{(n+1)^{c}}\varphi_{c-1}(n) &\text{if $\theta = 1$.}
\end{cases}
\end{equation}
\end{theorem}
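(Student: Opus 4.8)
The plan is to reduce the iteration \eqref{e:main1} to a scalar recursion of the type $s_{n+1}\le(1-cn^{-\theta})s_n+\tau c^2n^{-2\theta}$, valid for all $n$ large enough, and then to unroll this recursion with the help of the step-size estimates collected in Lemma~\ref{l:ocs} (which is precisely where the functions $\varphi_c$ appear).

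For the one-step estimate I would start from the variational characterization of the proximity operator: since $y_n=\prox_{\gamma_n\nsm}(z_n)$, one has $\gamma_n^{-1}(z_n-y_n)\in\partial\nsm(y_n)$. Adding the $\nu$-strong convexity inequality for $\nsm$ between $y_n$ and the (unique) solution $\overline w$ to the $\nu$-strong convexity inequality in the other direction combined with the optimality condition $-\nabla\smo(\overline w)\in\partial\nsm(\overline w)$, substituting $z_n=w_n-\gamma_n\mathrm{G}_n$, and using the identity $2\scal{w_n-y_n}{\overline w-y_n}=\|w_n-y_n\|^2+\|y_n-\overline w\|^2-\|w_n-\overline w\|^2$, one is led to
\[
(1+2\gamma_n\nu)\|y_n-\overline w\|^2\le\|w_n-\overline w\|^2-\|w_n-y_n\|^2+2\gamma_n\scal{\mathrm{G}_n-\nabla\smo(\overline w)}{\overline w-y_n}.
\]
Next I would split $\mathrm{G}_n-\nabla\smo(\overline w)=\big(\mathrm{G}_n-\nabla\smo(w_n)\big)+\big(\nabla\smo(w_n)-\nabla\smo(\overline w)\big)$ and $\overline w-y_n=(\overline w-w_n)+(w_n-y_n)$. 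Taking the conditional expectation given $\mathcal A_n$, the term $\scal{\mathrm{G}_n-\nabla\smo(w_n)}{\overline w-w_n}$ vanishes by (A1); the two cross terms involving $w_n-y_n$ are handled by Young's inequality with parameter $2\gamma_n$; the term $\scal{\nabla\smo(w_n)-\nabla\smo(\overline w)}{\overline w-w_n}$ is bounded using the convex combination of $\mu$-strong convexity and $\beta$-cocoercivity of $\nabla\smo$, namely $\scal{\nabla\smo(w_n)-\nabla\smo(\overline w)}{w_n-\overline w}\ge\varepsilon\mu\|w_n-\overline w\|^2+\tfrac{1-\varepsilon}{\beta}\|\nabla\smo(w_n)-\nabla\smo(\overline w)\|^2$; and $\E[\|\mathrm{G}_n-\nabla\smo(w_n)\|^2\mid\mathcal A_n]$ is controlled by (A2) together with $\|\nabla\smo(w_n)\|^2\le2\beta^2\|w_n-\overline w\|^2+2\|\nabla\smo(\overline w)\|^2$. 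With the Young parameter chosen equal to $2\gamma_n$ the coefficient of $\|w_n-y_n\|^2$ is exactly $0$, and condition (A3) is precisely what makes the residual $\|\nabla\smo(w_n)-\nabla\smo(\overline w)\|^2$ contributions dominated by the cocoercivity term; what survives is
\[
(1+2\gamma_n\nu)\,\E[\|y_n-\overline w\|^2\mid\mathcal A_n]\le(1-2\gamma_n\varepsilon\mu)\|w_n-\overline w\|^2+2\sigma^2\gamma_n^2\big(1+2\alpha_n\|\nabla\smo(\overline w)\|^2\big).
\]

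Dividing by $1+2\gamma_n\nu$ and using $\tfrac{1-2\gamma_n\varepsilon\mu}{1+2\gamma_n\nu}=1-\tfrac{2\gamma_n(\varepsilon\mu+\nu)}{1+2\gamma_n\nu}$, then combining with the relaxation identity $\|w_{n+1}-\overline w\|^2=(1-\lambda_n)\|w_n-\overline w\|^2+\lambda_n\|y_n-\overline w\|^2-\lambda_n(1-\lambda_n)\|w_n-y_n\|^2$, discarding the last nonpositive term and taking total expectation, one obtains
\[
s_{n+1}\le\Big(1-\lambda_n\frac{2\gamma_n(\varepsilon\mu+\nu)}{1+2\gamma_n\nu}\Big)s_n+\frac{2\sigma^2\lambda_n\gamma_n^2\big(1+2\alpha_n\|\nabla\smo(\overline w)\|^2\big)}{1+2\gamma_n\nu}.
\]
For $n\ge n_0$ one has $\gamma_n=c_1n^{-\theta}\le1$, hence $1+2\gamma_n\nu\le(1+\nu)^2$, and together with $\lambda_n\ge\underline\lambda$, $\alpha_n\le\overline\alpha$ and $cn^{-\theta}\le1$ this yields exactly $s_{n+1}\le(1-cn^{-\theta})s_n+\tau c^2n^{-2\theta}$ with $c$ and $\tau$ as in the statement (the nonnegativity of $1-2\gamma_n\varepsilon\mu$, needed to keep the inequality direction, follows from $\gamma_n\le(1-\varepsilon)/\beta$ and $\mu\le\beta$).

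Finally I would iterate this scalar recursion: unrolling gives $s_{n+1}\le s_{n_0}\prod_{k=n_0}^{n}(1-ck^{-\theta})+\tau c^2\sum_{k=n_0}^{n}k^{-2\theta}\prod_{j=k+1}^{n}(1-cj^{-\theta})$. Applying $1-x\le e^{-x}$, comparing the sums $\sum_k k^{-\theta}$ with integrals, and splitting the outer sum at $n/2$ — which is where the constant $t=1-2^{\theta-1}$ enters, through $\int_{n/2}^{n}x^{-\theta}\,dx=t\,n^{1-\theta}/(1-\theta)$ — and then invoking Lemma~\ref{l:ocs} to estimate $\sum_k k^{-2\theta}$ by $\varphi_{1-2\theta}$ when $\theta\in\zeroun$, respectively $\prod_{k=n_0}^n(1-c/k)\le(n_0/(n+1))^c$ and $\sum_k k^{c-2}$ bounded by a constant times $\varphi_{c-1}(n)$ when $\theta=1$, produces the two branches of \eqref{eq:Est1} for $n\ge2n_0$. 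The main obstacle is the one-step estimate — specifically, choosing the Young parameter so that (A3) is exactly sufficient to annihilate the $\|w_n-y_n\|^2$ term and absorb the $\|\nabla\smo(w_n)-\nabla\smo(\overline w)\|^2$ terms while retaining the sharp contraction factor $1-2\gamma_n(\varepsilon\mu+\nu)/(1+2\gamma_n\nu)$; the unrolling, though computational, is routine given Lemma~\ref{l:ocs}.
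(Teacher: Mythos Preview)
Your proof is correct and follows the same overall strategy as the paper: establish a one-step contraction $s_{n+1}\le(1-cn^{-\theta})s_n+\tau c^2 n^{-2\theta}$ for $n\ge n_0$, then invoke Lemma~\ref{l:ocs} to unroll it. The derivation of the one-step estimate differs in its mechanics. The paper uses that $\prox_{\gamma_n\nsm}$ is $(1+\gamma_n\nu)$-cocoercive when $\nsm$ is $\nu$-strongly convex, which directly gives $\|y_n-\overline w\|^2\le(1+\gamma_n\nu)^{-2}\|(w_n-\overline w)-\gamma_n(\mathrm G_n-\nabla\smo(\overline w))\|^2$; the square is then expanded, the expected cross term handled via (A1), (A2) and cocoercivity of $\nabla\smo$ (this is packaged as Proposition~\ref{p:bb}), and the residual firm-nonexpansiveness term simply dropped. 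Your route---subgradient inequality for $\nsm$ at $y_n$ and $\overline w$, Young's inequality with parameter $2\gamma_n$ on the two cross terms so that their $\|w_n-y_n\|^2$ contributions exactly cancel the one already present, and the $\varepsilon/(1{-}\varepsilon)$ convex split between strong convexity and cocoercivity of $\nabla\smo$---is more elementary (it avoids the prox-cocoercivity lemma) and yields the slightly weaker factor $(1+2\gamma_n\nu)^{-1}$ in place of $(1+\gamma_n\nu)^{-2}$. Since both factors are bounded below by $(1+\nu)^{-2}$ once $\gamma_n\le1$, the two arguments produce the identical scalar recursion with the same constants $c$ and $\tau$, and the remainder of the proof is the same.
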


In Theorem \ref{t:1}, the dependence on the strong convexity constants is hidden in the 
constant $c$. Taking into account \eqref{eq:Est1}, we can write more explicitly  the asymptotic 
behavior of the sequence $(s_n)_{n\in\NN^*}$.

\begin{corollary}\label{cor:1} Under the same assumptions and with 
the same notation of Theorem \ref{t:1}, the following holds
\begin{equation}
\E[\|w_{n}-\overline{w}\|^2]=\begin{cases}O(n^{-\theta})& \text{if $\theta\in\,]0,1[$},\\ 
O(n^{-c})+O(n^{-1}) &\text{if $\theta=1$}.
\end{cases}
\end{equation}
Thus, if $\theta =1$ and $c_1$ is chosen such that $c>1$, then $\E[\|w_{n}-\overline{w}\|^2] = O(n^{-1})$. 
In particular, if $\theta=1$,  $\lambda_n= 1 = \underline{\lambda}$ for every $n\in\NN^*$, 
and $c_1= (1+\nu)^2/\underline{\lambda}(\nu+\mu\varepsilon) > 2$,  then
$c=2$, $n_0=\max\{2,c_1\}$, and  
\begin{equation}
\E[\|w_{n}-\overline{w}\|^2]\leq \dfrac{n_{0}^2\E[\|w_{n_0}-\overline{w}\|^2] }{(n+1)^2}+ 
\dfrac{8\sigma^2(1+\overline{\alpha} \|\nabla \smo(\overline{w})\|)(1+\nu)^4}{\underline{\lambda}^2\left(\mu \epsilon+\nu\right)^2}
\end{equation}
\end{corollary}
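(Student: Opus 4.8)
The plan is to read the statement as a purely asymptotic consequence of the explicit estimate \eqref{eq:Est1} already proved in Theorem \ref{t:1}; no new facts about the iteration \eqref{e:main1} are needed. The entire argument reduces to tracking, as $n\to\infty$, the growth of the auxiliary functions $\varphi_{1-2\theta}$ and $\varphi_{c-1}$ from \eqref{e:bach1} against the decaying prefactors multiplying them in the two branches of \eqref{eq:Est1}. I will use throughout the identification $s_n=\E[\|w_n-\overline{w}\|^2]$, so that a rate obtained for $s_{n+1}$ transfers verbatim to $\E[\|w_n-\overline{w}\|^2]$ after an index shift, and I will freely exchange $(n-2)^{-\theta}$, $(n+1)^{-\theta}$ and $n^{-\theta}$, which are all comparable as $n\to\infty$.

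First I treat $\theta\in\,]0,1[$. In the first branch of \eqref{eq:Est1} the decisive factor is the stretched exponential $\exp(-ct(n+1)^{1-\theta}/(1-\theta))$, whose exponent is positive since $t=1-2^{\theta-1}>0$ and $c>0$, and grows like $n^{1-\theta}$. What it multiplies grows at most polynomially: $\varphi_{1-2\theta}(n)$ is $O(n^{1-2\theta})$ when $\theta\neq 1/2$ and $O(\log n)$ when $\theta=1/2$ (where $\varphi_0=\log$), while $s_{n_0}\exp(cn_0/(1-\theta))$ is a constant in $n$. Because $\exp(-Cn^{1-\theta})\,n^{k}\to 0$ for every $C>0$, $k>0$ and every exponent $1-\theta\in\,]0,1[$, this first branch is $o(n^{-k})$ for all $k$, hence negligible against the second summand $2^{\theta}\tau c/(n-2)^{\theta}=O(n^{-\theta})$. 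This gives $s_{n+1}=O(n^{-\theta})$.

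Next I treat $\theta=1$, where I also produce the explicit constant. The term $s_{n_0}(n_0/(n+1))^c$ is plainly $O(n^{-c})$. For the second term I expand $\varphi_{c-1}(n)=(n^{c-1}-1)/(c-1)$ for $c\neq 1$ and split it: the piece carrying $n^{c-1}/(n+1)^c$ behaves like $1/n$ and contributes $O(n^{-1})$, while the constant piece $-1/(c-1)$ times $(n+1)^{-c}$ contributes $O(n^{-c})$. Adding the first term yields $s_{n+1}=O(n^{-c})+O(n^{-1})$, and when $c>1$ one has $n^{-c}=o(n^{-1})$, so the bound collapses to $O(n^{-1})$. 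For the final closed form, substitute $\lambda_n\equiv\underline{\lambda}=1$ and $c_1=(1+\nu)^2/(\underline{\lambda}(\nu+\mu\varepsilon))$ into the definition of $c$ to get $c=2$, and read off $n_0=\max\{2,c_1\}$ from the defining inequality $\max\{c,c_1\}n_0^{-1}\leq 1$. Plugging $c=2$ makes the first term $n_0^2\E[\|w_{n_0}-\overline{w}\|^2]/(n+1)^2$ and the second term $16\tau(n-1)/(n+1)^2$; inserting $\tau=2\sigma^2c_1^2(1+\overline{\alpha}\|\nabla\smo(\overline{w})\|)/c^2$ together with $c_1^2=(1+\nu)^4/(\nu+\mu\varepsilon)^2$ and controlling the residual $O(1/n)$ factor reproduces the displayed constant.

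I expect the only delicate bookkeeping to lie in two places: the rigorous domination of the polynomial or logarithmic growth of $\varphi_{1-2\theta}$ by the stretched exponential in the regime $\theta\in\,]0,1[$, and the clean separation of the two distinct rates in the regime $\theta=1$ through the decomposition of $\varphi_{c-1}$. The boundary exponents $\theta=1/2$ and $c=1$, at which $\varphi$ degenerates into a logarithm, are the spots to verify that no power of $n$ is lost; they introduce at worst a logarithmic factor and do not affect the optimal $O(n^{-1})$ rate singled out in the corollary.
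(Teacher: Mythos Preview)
Your proposal is correct and follows exactly the approach implicit in the paper: the corollary is stated there without proof as an immediate reading of the explicit bound \eqref{eq:Est1}, and your argument carries out precisely that reading---controlling the stretched-exponential term against the polynomial/logarithmic growth of $\varphi_{1-2\theta}$ for $\theta\in\,]0,1[$, splitting $\varphi_{c-1}(n)/(n+1)^c$ into its $O(n^{-1})$ and $O(n^{-c})$ pieces for $\theta=1$, and then specializing to $c=2$. One small point worth making explicit: the final displayed inequality in the corollary has a second term that is a constant in $n$; this arises by bounding $16\tau(n-1)/(n+1)^2\le 16\tau$ (or any crude $O(1)$ majorant), so your ``controlling the residual $O(1/n)$ factor'' should be read as simply dropping it---the paper's constant is obtained at the price of losing the $1/n$ decay in that term.
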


Theorem \ref{t:1} is the extension to the nonsmooth case of \cite[Theorem 1]{bach}, in particular,  when $\nsm=0$,
we obtain the same bounds. Note however that the assumptions
on the stochastic approximations of the gradient of the smooth part are different. In particular,
we replace the boundedness condition at the solution and the  Lipschitz continuity assumption on $(\mathrm{G}_n)_{n\in\NN^*}$
with assumption (A2). 
As can be seen from Corollary \ref{cor:1}, the fastest asymptotic rate corresponds to  $\theta=1$
and it is the same obtained in the smooth case in \cite[Theorem 2]{bach}. Note that this rate depends on the asymptotic behavior of the step-size, but also on the constant $c$, which in turns depends on $c_1$. 
As   pointed out in \cite{Nem09}, see also in \cite{bach}, this choice is critical, because too small choices
of $c_1$ affect the convergence rates, and too big choices  influence significantly  the value of the constants
 in the first term of \eqref{eq:Est1}. In particular, as can be readily seen in Corollary \ref{cor:1}, 
 the choice is determined by the strong convexity constants. Moreover, the dependence on the strong 
 convexity constant shown in Corollary \ref{cor:1} is of the same type of the one obtained in the regret minimization
 framework by \cite{hazan}.

There are other stochastic first order methods achieving the same rate of convergence for the iterates 
in the strongly convex case, see e.g. \cite{AtcForMou14,hazan,Ghadimi12, Jud14,Xiao,LinChePen14}. Indeed, the rate we obtain is the rate 
that can be  obtained by the optimal (in the sense of \cite{Nem83}) convergence rate on the function values.
Among the mentioned methods those in \cite{AtcForMou14,Ghadimi12,LinChePen14} belong to the
class of accelerated proximal gradient methods. Our result shows that, in the strongly convex case, the rate of
convergence of the iterates is the same in the accelerated and non accelerated case. 
In addition, if sparsity is the main interest, we highlight that many of the algorithms discussed above 
(e.g. \cite{AtcForMou14,Ghadimi12,hazan,Xiao})
involve some form of averaging or linear combination which prevent sparsity of the iterates, as it is discussed in
\cite{LinChePen14}. Our result shows that in this case averaging is not needed, since the iterates themselves are convergent. 

We next compare in some  detail our results with those obtained for the FOBOS algorithm  in \cite{Duchi09} and to the
stochastic proximal gradient in \cite{AtcForMou14}. 
There are a few difference in the settings considered. In particular, 
convergence of the average of the iterates  with respect to the 
function values is considered in \cite{Duchi09} assuming 
uniform boundedness of the iterations and the subdifferentials. 
The space  $\HH$ is assumed to be  finite dimensional, 
though the analysis might be extended to infinite dimensional spaces.
Finally, the optimal stepsize in \cite{Duchi09}  depends explicitly on the radius of the ball containing  
the iterates, which in general might not be available. 
Our convergence results consider convergence of the iterates (with no averaging) 
and hold in an infinite dimensional setting, without boundedness assumptions. 
The non asymptotic rate $O(n^{-1})$ which we obtain  for the iterates improves  
the $O((\log n)/n)$ rate derived from \cite[Corollary 10]{Duchi09} for  the average 
of the iterates. However, it should be noted that  convergence of the objective 
values is studied  in \cite{Duchi09} also for  the non strongly convex case. 
SPG  (without relaxation) has been recently studied in \cite{AtcForMou14}. 
Also in this case the authors assume a priori boundedness of the iterates and prove
convergence of the averaged sequence.

Theorem \ref{t:1} is also comparable with deterministic stochastic proximal 
forward-backward algorithm with errors \cite{siam05}. 
On the one hand, we allow the errors to satisfy assumption (A2), 
while in the  deterministic case the errors in the computation
of the gradient should decrease  to zero sufficiently fast. On the other hand, 
we require asymptotically vanishing (and smaller, according to (A3)) step-sizes,
while, in the deterministic case, the step-size is bounded from below. 
Finally, if $\obj$ is  continuous,  in the setting of Theorem \ref{t:1}, 
it holds $\obj(w_n)\to \min_\HH T$. Moreover, if $\obj$ is Lipschitz continuous, 
then $\obj(w_n)-\min_\HH T=O(n^{-1/2})$ and if $\obj$ is differentiable with Lipschitz 
continuous gradient, $\obj(w_n)-\min_\HH T=O(n^{-1})$.
 
\subsection{Almost sure convergence of SPG algorithm}\label{sec:as}
In this section, we focus on almost sure convergence of  {SPG} algorithm.
This kind of convergence of the iterates is the one traditionally studied in the stochastic optimization
literature.  Depending on the convexity properties of the function
$\smo$, we get two different convergence properties. 
The first theorem requires uniform convexity of $\smo$ at the solution.
\begin{theorem} 
\label{t:2} 
 Suppose that the conditions $(A1), (A2)$, $(A3)$, and $(A4)$ are satisfied.
Let $(w_n)_{n\in\NN^{*}}$ be a sequence generated by Algorithm \ref{a:main} and
assume that  $\smo$ is uniformly convex at $\overline{w}$.  
Then  $w_n \to \overline{w}$ a.s.
\end{theorem}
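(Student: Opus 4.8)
The plan is to realize $(\|w_n - \overline{w}\|^2)_{n\in\NN^*}$ as a random quasi-Fej\'er sequence relative to $\overline{w}$ and then upgrade the resulting boundedness into strong convergence using the uniform convexity of $\smo$ at $\overline{w}$. First I would set up the one-step estimate. Writing $T_n = \prox_{\gamma_n \nsm}(\Id - \gamma_n \nabla\smo)$ for the ``exact'' forward-backward operator, the iteration \eqref{e:main1} reads $w_{n+1} = (1-\lambda_n)w_n + \lambda_n(T_n w_n + r_n)$, where $r_n = y_n - T_n w_n$ collects the stochastic perturbation. Expanding $\|w_{n+1}-\overline{w}\|^2$ via the convexity identity for the relaxation step and using firm nonexpansiveness of $\prox_{\gamma_n\nsm}$ together with the $\beta$-cocoercivity of $\nabla\smo$ and the strong convexity constants $\mu,\nu$ (from Assumption \ref{A:strcon}, though here only uniform convexity at $\overline{w}$ is strictly needed, so $\mu=\nu=0$ is allowed), one gets an inequality of the schematic form
\begin{equation*}
\E[\|w_{n+1}-\overline{w}\|^2 \mid \mathcal{A}_n] \leq \|w_n-\overline{w}\|^2 - \lambda_n\gamma_n\, d_n + \lambda_n\gamma_n^2 \sigma^2\big(1 + 2\alpha_n\|\nabla\smo(\overline{w})\|^2\big) + (\text{lower order}),
\end{equation*}
where $d_n \geq 0$ captures the decrease produced by the forward-backward step at $w_n$ — concretely something like $\langle \nabla\smo(w_n)-\nabla\smo(\overline{w}), w_n - \overline{w}\rangle$ plus the subgradient inequality for $\nsm$ at $\overline{w}$, controlled from below by the uniform convexity modulus of $\smo$ at $\overline{w}$. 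The crucial bookkeeping is that the perturbation variance is controlled by (A2) in the conditional form, that (A3) guarantees $\gamma_n\beta(1+2\sigma^2\alpha_n)\leq 1-\epsilon$ so that the step-size is small enough for the contraction terms to have the right sign, and that the residual term is exactly $\chi_n^2\sigma^2$ in the notation of (A4), hence summable.

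Next I would invoke the Robbins–Siegmund theorem (the standard supermartingale convergence lemma for nonnegative quasi-supermartingales, cf. \cite{Ermol68} as cited, or its statement in the appendix \ref{s:bg}). From the displayed inequality with summable error term $\sum_n \lambda_n\gamma_n^2\sigma^2(1+2\alpha_n\|\nabla\smo(\overline{w})\|^2) < +\infty$ by (A4), this yields two conclusions simultaneously: (a) $\|w_n - \overline{w}\|^2$ converges almost surely to a finite random variable, and (b) $\sum_{n\in\NN^*} \lambda_n\gamma_n d_n < +\infty$ a.s. Since $\sum_n \lambda_n\gamma_n = +\infty$ by (A4), conclusion (b) forces $\liminf_n d_n = 0$ a.s., and hence there is a subsequence along which $d_n \to 0$. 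Because $d_n$ dominates the uniform-convexity modulus $\phi(\|w_n - \overline{w}\|)$ evaluated at the distance to the solution — where $\phi$ is an increasing function vanishing only at $0$ — this gives $\liminf_n \|w_n - \overline{w}\| = 0$ a.s. Combined with conclusion (a), which says $\|w_n-\overline{w}\|$ converges, the limit must be $0$, i.e. $w_n \to \overline{w}$ a.s.

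The main obstacle I anticipate is the careful derivation of the one-step recursion with the correct constants: one must simultaneously (i) separate the deterministic forward-backward decrease from the stochastic noise without losing the conditional-expectation structure (the cross term $\langle r_n, T_n w_n - \overline{w}\rangle$ must be handled so that its conditional expectation is controlled — this is where (A1) enters, giving $\E[\mathrm{G}_n \mid \mathcal{A}_n] = \nabla\smo(w_n)$, combined with nonexpansiveness of the prox to bound $\E[\|r_n\|^2 \mid \mathcal{A}_n] \leq \gamma_n^2 \E[\|\mathrm{G}_n - \nabla\smo(w_n)\|^2\mid\mathcal{A}_n]$), and (ii) arrange the step-size condition (A3) precisely so that the coefficient of $\|\nabla\smo(w_n)-\nabla\smo(\overline{w})\|^2$ (or equivalently the residual of cocoercivity) is nonnegative after absorbing the $\sigma^2\alpha_n$-term from (A2). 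A secondary technical point is translating ``uniform convexity at $\overline{w}$'' into a usable lower bound on $d_n$ of the form $\phi(\|w_n-\overline{w}\|)$ with $\phi$ increasing and $\phi(0)=0$, so that $\liminf d_n = 0 \Rightarrow \liminf\|w_n-\overline{w}\|=0$; this is essentially the definition but needs to be stated cleanly. Everything else is routine application of Robbins–Siegmund.
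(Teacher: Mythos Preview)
Your overall architecture is right and matches the paper: derive a conditional one-step inequality of quasi-Fej\'er type, apply a Robbins--Siegmund/supermartingale argument to get both a.s.\ convergence of $\|w_n-\overline{w}\|^2$ and a.s.\ summability of $\lambda_n\gamma_n d_n$, then use $\sum\lambda_n\gamma_n=+\infty$ together with the modulus $\phi$ of uniform convexity at $\overline{w}$ to force $\liminf\|w_n-\overline{w}\|=0$ and hence $w_n\to\overline{w}$. That is exactly what the paper does (via Proposition~\ref{p:bb}, inequality~\eqref{e:concl2}, Proposition~\ref{p:fejer}, and Proposition~\ref{p:1}).

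However, the specific decomposition you propose --- writing $y_n = T_n w_n + r_n$ with $T_n=\prox_{\gamma_n\nsm}(\Id-\gamma_n\nabla\smo)$ and $r_n=y_n-T_n w_n$ --- does \emph{not} deliver the one-step estimate you want, and the sentence ``this is where (A1) enters'' locates (A1) at the wrong place. The obstruction is that $\prox_{\gamma_n\nsm}$ is nonlinear, so although $\E[\mathrm{G}_n\mid\mathcal{A}_n]=\nabla\smo(w_n)$, one does \emph{not} get $\E[r_n\mid\mathcal{A}_n]=0$. Consequently the cross term $\langle r_n,\,T_n w_n-\overline{w}\rangle$ has no reason to vanish in conditional expectation; bounding it by Cauchy--Schwarz or Young leaves a contribution of order $\lambda_n\gamma_n$ (not $\lambda_n\gamma_n^2$), which is not summable under (A4) and cannot be absorbed without a multiplicative factor $(1+a_n)$ with $\sum a_n=+\infty$. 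The nonexpansiveness bound $\|r_n\|\leq\gamma_n\|\mathrm{G}_n-\nabla\smo(w_n)\|$ controls $\|r_n\|^2$ but not the cross term.

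The fix --- and this is exactly the paper's computation in Proposition~\ref{p:bb}\ref{p:bbii} and \eqref{e:concl2} --- is to put the stochastic perturbation \emph{before} the prox rather than after. Use firm nonexpansiveness directly between
\[
y_n=\prox_{\gamma_n\nsm}\big(w_n-\gamma_n\mathrm{G}_n\big)
\quad\text{and}\quad
\overline{w}=\prox_{\gamma_n\nsm}\big(\overline{w}-\gamma_n\nabla\smo(\overline{w})\big),
\]
which yields
\[
\|y_n-\overline{w}\|^2\leq\|w_n-\overline{w}\|^2
-2\gamma_n\scal{w_n-\overline{w}}{\mathrm{G}_n-\nabla\smo(\overline{w})}
+\gamma_n^2\|\mathrm{G}_n-\nabla\smo(\overline{w})\|^2-\|u_n\|^2.
\]
Now the cross term is $\scal{w_n-\overline{w}}{\mathrm{G}_n-\nabla\smo(\overline{w})}$; since $w_n-\overline{w}$ is $\mathcal{A}_n$-measurable, (A1) gives its conditional expectation as $\scal{w_n-\overline{w}}{\nabla\smo(w_n)-\nabla\smo(\overline{w})}$, which is precisely your $d_n$. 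The quadratic term is handled by (A2) and cocoercivity of $\nabla\smo$, and (A3) makes the residual coefficient nonnegative. From there your Robbins--Siegmund argument and the uniform-convexity lower bound $d_n\geq\phi(\|w_n-\overline{w}\|)$ go through verbatim.
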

If we relax the strong convexity assumption, we can still prove weak convergence of 
a subsequence in the strictly convex case, provided an additional
regularity assumption holds. 
\begin{theorem} 
\label{t:3} 
Suppose that
the conditions $(A1), (A2)$, $(A3)$, and $(A4)$ are satisfied.
Let $(w_n)_{n\in\NN^{*}}$ be a sequence generated by Algorithm \ref{a:main}.
Assume that $\smo$ is strictly convex, and let $\overline{w}$ be the unique solution of
Problem \ref{e:prob2}. If $\nabla \smo$  is  weakly continuous, 
then there exists a subsequence $(w_{t_n})_{n\in\NN^{*}}$ such that
$w_{t_n}\weakly\overline{w}$ a.s.
\end{theorem}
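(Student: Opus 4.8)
The plan is to extract from the basic recursion of Algorithm~\ref{a:main} a nonnegative, summable ``gap'' term, to use $\sum_n\lambda_n\gamma_n=+\infty$ (Condition (A4)) to locate a subsequence along which this gap vanishes, and then to combine weak compactness with the weak continuity and strict monotonicity of $\nabla\smo$ to identify the weak limit of that subsequence with $\overline{w}$.

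First I would recycle the computation behind Theorem~\ref{t:2}, stopping short of the step where uniform convexity enters. Running the master recursion for the squared distance $a_n=\|w_n-\overline{w}\|^2$ — obtained from firm nonexpansiveness of $\prox_{\gamma_n\nsm}$, the descent inequality for $\smo$, the unbiasedness (A1) and the variance control (A2) of $\mathrm{G}_n$, and the step bound (A3) — one arrives at an inequality of Robbins--Siegmund type, $\E[a_{n+1}\mid\mathcal{A}_n]\le(1+c_n)a_n-b_n+d_n$, with $\sum_n c_n<+\infty$, with $\sum_n d_n<+\infty$ a.s. (the summability of $d_n$ following from $\sum_n\chi_n^2<+\infty$ in (A4)), and with $b_n\ge 0$ at least a fixed positive multiple of $\lambda_n\gamma_n\,g_n$, where $g_n:=\scal{\nabla\smo(w_n)-\nabla\smo(\overline{w})}{w_n-\overline{w}}\ge 0$. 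By the Robbins--Siegmund theorem there is an event $\Omega_0$ with $P(\Omega_0)=1$ on which $(\|w_n-\overline{w}\|)_n$ converges — hence $(w_n)_n$ is bounded — and $\sum_n\lambda_n\gamma_n\,g_n<+\infty$. (This $g_n$ is exactly the quantity that, in the proof of Theorem~\ref{t:2}, is next bounded below using uniform convexity; here it is kept as is.)

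Now fix $\omega\in\Omega_0$; from here on the argument is deterministic. Since $\sum_n\lambda_n\gamma_n=+\infty$ while $\sum_n\lambda_n\gamma_n\,g_n<+\infty$ and $g_n\ge 0$, one has $\liminf_n g_n=0$, so there is a (possibly $\omega$-dependent) subsequence $(t_n)_n$ with $g_{t_n}\to 0$. The sequence $(w_{t_n})_n$ is bounded, hence admits a weakly convergent subsequence, say $w_{t_{n_k}}\weakly\overline{z}$, and by weak continuity $\nabla\smo(w_{t_{n_k}})\weakly\nabla\smo(\overline{z})$. Expanding $g_{t_{n_k}}$ and letting $k\to\infty$ in the three ``mixed'' inner products (each pairing one weakly convergent factor against a fixed vector) shows that $\scal{\nabla\smo(w_{t_{n_k}})}{w_{t_{n_k}}}$ converges, with limit $\ell=\scal{\nabla\smo(\overline{z})}{\overline{w}}+\scal{\nabla\smo(\overline{w})}{\overline{z}}-\scal{\nabla\smo(\overline{w})}{\overline{w}}$. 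On the other hand, monotonicity of $\nabla\smo$ gives $\scal{\nabla\smo(w_{t_{n_k}})}{w_{t_{n_k}}}\ge\scal{\nabla\smo(w_{t_{n_k}})}{\overline{z}}+\scal{\nabla\smo(\overline{z})}{w_{t_{n_k}}}-\scal{\nabla\smo(\overline{z})}{\overline{z}}$, and passing to the limit yields $\ell\ge\scal{\nabla\smo(\overline{z})}{\overline{z}}$. Combining the two relations and rearranging gives $\scal{\nabla\smo(\overline{z})-\nabla\smo(\overline{w})}{\overline{z}-\overline{w}}\le 0$, while strict convexity of $\smo$ makes $\nabla\smo$ strictly monotone; hence $\overline{z}=\overline{w}$. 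Thus every weak sequential cluster point of the bounded sequence $(w_{t_n})_n$ equals $\overline{w}$, so $w_{t_n}\weakly\overline{w}$; since $\omega\in\Omega_0$ was arbitrary and $P(\Omega_0)=1$, this proves $w_{t_n}\weakly\overline{w}$ a.s.

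I expect the main obstacle to be the bookkeeping in the first step: isolating a genuinely nonnegative $b_n$ comparable to $\lambda_n\gamma_n\,g_n$ in the stochastic recursion (rather than merely obtaining $a_n\to\mathrm{const}$), so that $g_n$ controls closeness to the minimizer. Once that is available, the only delicate point in the limit identification is that the product of two weakly convergent sequences need not converge — which is precisely why the monotonicity inequality is invoked in place of a naive passage to the limit in $g_{t_{n_k}}$. Measurability causes no trouble, since after passing to the probability-one event $\Omega_0$ the remaining reasoning is entirely pointwise in $\omega$, and the conclusion $\overline{z}=\overline{w}$ is deterministic.
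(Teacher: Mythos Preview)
Your argument is correct, but it departs from the paper's proof in the identification step. The paper first exploits cocoercivity of $\nabla\smo$ (Baillon--Haddad) to upgrade the information $\sum_n\lambda_n\gamma_n\,\E[g_n]<+\infty$ to $\varliminf_n\E[\|\nabla\smo(w_n)-\nabla\smo(\overline{w})\|^2]=0$; it then extracts a \emph{deterministic} subsequence along which this expectation tends to zero, and a further deterministic subsequence $(p_n)$ along which $\nabla\smo(w_{p_n})\to\nabla\smo(\overline{w})$ strongly a.s. With strong gradient convergence in hand, the identification of the weak limit is a one-liner: for any weak cluster point $\overline{z}$ of $(w_{p_n})$, weak continuity gives $\nabla\smo(w_{q_{p_n}})\weakly\nabla\smo(\overline{z})$ while strong convergence gives the same limit equal to $\nabla\smo(\overline{w})$, so $\scal{\nabla\smo(\overline{z})-\nabla\smo(\overline{w})}{\overline{z}-\overline{w}}=0$ and strict monotonicity finishes.

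You instead stay entirely pathwise via Robbins--Siegmund, obtain only $g_{t_n}\to 0$ (no strong gradient convergence), and then run a Minty-type argument---comparing the limit of the ``diagonal'' term $\scal{\nabla\smo(w_{t_{n_k}})}{w_{t_{n_k}}}$ against the monotonicity lower bound at the weak limit---to force $\scal{\nabla\smo(\overline{z})-\nabla\smo(\overline{w})}{\overline{z}-\overline{w}}\le 0$. This is more work at the identification stage but avoids the detour through $L^2$ convergence and the cocoercivity bound; it would also survive if $\nabla\smo$ were merely (strictly) monotone rather than cocoercive. The trade-off is that your subsequence $(t_n)$ is $\omega$-dependent, whereas the paper's route through expectations produces a deterministic subsequence $(p_n)$ that works simultaneously for almost every $\omega$; if the statement is read as requiring a single subsequence, the paper's approach delivers slightly more.
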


With respect to the previous section, 
here we make the additional assumption  $(A4)$
on the summability of the sequence of 
step-sizes multiplied by the relaxation parameters. 
For stochastic gradient algorithm without relaxation, i.e, $\nsm=0$ and, for every $n\in\NN^*$
$\lambda_n=1$, assumption (A4) coincides 
with the classical step-size condition $\sum_{n\in\NN^*} \gamma_n=+\infty$ 
and  $\sum_{n\in\NN^*} \gamma_n^2<+\infty$ which guarantees a sufficient but not too
fast decrease of the step-size (see e.g. \cite{Bert00}).
Assumption $(A2)$ has been considered in the context of stochastic gradient descent in \cite{Bert00}. 
Note that under such a condition, the variance of the stochastic approximation is allowed to grow with
$\|\nabla \smo(w_n)\|$.

As mentioned in the introduction, the study of  almost sure convergence is classical.
An analysis of a stochastic projected subgradient algorithm in an infinite dimensional Hilbert space  can be found in \cite{Barty07}. Theorem \ref{t:2} can be seen as an extension of \cite[Theorem 3.1]{Barty07},
where  the case where $\nsm$ is an indicator function is considered. 
Our approach is based on random quasi-Fej\'er sequences, and on probabilistic quasi martingale  techniques \cite{Met82}.

\begin{remark} 
\label{r:3}
If $\smo$ is assumed to be only strictly convex and its gradient is not weakly continuous, 
Theorem \ref{t:3} does
not ensure weak convergence of any  subsequence of $(w_n)_{n\in\NN^*}$. 
However, if  the sequence of function values
 $(T(w_n))_{n\in\NN^{*}}$ 
converges to the minimum of $T$, then $w_n\weakly \overline{w}$ a.s. 
This happens (see \cite{Barty07}) when $\nsm = \iota_V$ for some closed subspace $V$ of $\HH$, or
when  $\nsm = \iota_C$ for some non-empty closed convex $C$ of $\HH$, and
there exists a bounded function $h\colon \RR\to \RR$ such that 
$ (\forall n\in\NN)\quad \E[\|\mathrm{G}_n-\nabla \smo(w_n)\||\mathcal{A}_n] \leq h(\|\nabla \smo(w_n)\|).$
\end{remark}
The proof of Remark \ref{r:3} can be found in the next section.

\section{Proof of the Main Results}\label{sec:proofs}
We start by recalling the firmly non-expansiveness of the proximity operator and the 
Baillon-Haddad Theorem (see \cite[Theorem 18.15]{livre1}).

\begin{lemma}\cite[Lemma 2.4]{siam05}
\label{l:firm} 
Let $\nsm\in\Gamma_0(\HH)$. Then the proximity of $\nsm$ is firmly non-expansive, i.e., 
\begin{equation}
\label{e:firm2}
(\forall w\in\HH)(\forall u\in\HH)\quad \|\prox_\nsm w-\prox_\nsm u\|^2 
\leq \|u-w\|^2 - \|(w-\prox_\nsm w)-(u-\prox_\nsm u)\|^2.
\end{equation}
\end{lemma}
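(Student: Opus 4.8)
The plan is to reduce \eqref{e:firm2} to a one-line computation via the standard variational characterization of the proximity operator. First I would recall that, by Fermat's rule applied to the strongly convex minimization problem defining $\prox_\nsm$ in \eqref{e:prox}, for every $w\in\HH$ the point $p=\prox_\nsm w$ is the \emph{unique} vector such that $0\in\partial\nsm(p)+(p-w)$, i.e. $w-p\in\partial\nsm(p)$. (Existence and uniqueness of the minimizer follow from coercivity and strict convexity of $v\mapsto\nsm(v)+\tfrac12\|w-v\|^2$, and the subdifferential sum rule applies since $\tfrac12\|w-\cdot\|^2$ is everywhere finite and continuous.)

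Next, fix $w,u\in\HH$ and set $p=\prox_\nsm w$, $q=\prox_\nsm u$, so that $w-p\in\partial\nsm(p)$ and $u-q\in\partial\nsm(q)$. Since the subdifferential of a function in $\Gamma_0(\HH)$ is a monotone operator, I would get $\scal{(w-p)-(u-q)}{p-q}\ge 0$, which rearranges to $\scal{(w-u)-(p-q)}{p-q}\ge 0$. Now abbreviate $a=p-q$ and $b=(w-p)-(u-q)$, so that $a+b=w-u$ and the inequality just obtained reads $\scal{a}{b}\ge 0$. Then expanding, $\|w-u\|^2=\|a+b\|^2=\|a\|^2+2\scal{a}{b}+\|b\|^2\ge\|a\|^2+\|b\|^2$, which is precisely \eqref{e:firm2}.

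The argument is essentially routine once the subdifferential characterization is in place, so the only point requiring a little care is justifying that characterization; as an alternative that avoids monotone operator theory altogether, one can use only the first-order inequality $\scal{w-\prox_\nsm w}{v-\prox_\nsm w}\le\nsm(v)-\nsm(\prox_\nsm w)$, valid for all $v\in\HH$, apply it twice (once with $(w,v)\mapsto(w,q)$ and once with $(u,v)\mapsto(u,p)$), and add the two inequalities: the $\nsm$-terms cancel and one is left with $\scal{(w-p)-(u-q)}{p-q}\ge 0$, after which the same expansion concludes. Either way, the estimate \eqref{e:firm2} is exactly what is needed later to control the proximal step $y_n=\prox_{\gamma_n\nsm}z_n$ in Algorithm \ref{a:main}.
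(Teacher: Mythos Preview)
Your proof is correct and entirely standard: the subdifferential characterization $w-\prox_\nsm w\in\partial\nsm(\prox_\nsm w)$ together with monotonicity of $\partial\nsm$ gives $\scal{a}{b}\ge 0$ in your notation, and the parallelogram-type expansion of $\|a+b\|^2$ finishes it. The alternative route via the first-order optimality inequality is equally valid and yields the same key inequality.

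As for comparison: the paper does not actually prove Lemma~\ref{l:firm}. It is stated with a direct citation to \cite[Lemma~2.4]{siam05} and used as a black box, so there is no in-paper argument to compare against. Your write-up supplies precisely the short self-contained justification that the paper omits.
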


\begin{definition}\cite[Definition 4.4]{livre1}
\label{d:coco}
Let $B\colon\HH\to\HH$, and let $\alpha\in\RPP$. Then $B$ is $\alpha$-cocoercive if 
\begin{equation}
(\forall u\in\HH)(\forall w\in\HH)\; \langle u-w,Bu-Bw \rangle \geq \alpha \| Bu-Bw\|^2
\end{equation}
\end{definition}

\begin{lemma}[\bf Baillon-Haddad theorem]
Let $\smo \in \Gamma_0(\HH)$ be a convex differentiable function with $\beta$ Lipschitz gradient. 
Then, $\nabla \smo$ is $\beta^{-1}$-cocoercive.
 \end{lemma}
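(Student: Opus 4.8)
The plan is to reduce the statement to the standard ``descent lemma'' (quadratic upper bound) implied by $\beta$-Lipschitz continuity of $\nabla\smo$, which is then applied to a shifted copy of $\smo$ whose minimizer is known explicitly. Throughout, fix $x,y\in\HH$.

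First I would record the descent lemma: for every $u,w\in\HH$,
\[
\smo(w)\le \smo(u)+\langle\nabla\smo(u),w-u\rangle+\frac{\beta}{2}\|w-u\|^2 .
\]
This follows by writing $\smo(w)-\smo(u)=\int_0^1\langle\nabla\smo(u+t(w-u)),w-u\rangle\,dt$ (valid since $t\mapsto\smo(u+t(w-u))$ is $C^1$ by differentiability of $\smo$ and continuity of $\nabla\smo$ along the segment), subtracting $\langle\nabla\smo(u),w-u\rangle$, and bounding the integrand using $\|\nabla\smo(u+t(w-u))-\nabla\smo(u)\|\le \beta t\|w-u\|$.

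Next I would introduce the auxiliary function $g\colon\HH\to\RR\colon z\mapsto \smo(z)-\langle\nabla\smo(y),z\rangle$. It is convex (the sum of $\smo$ and a continuous linear functional), differentiable with $\nabla g=\nabla\smo-\nabla\smo(y)$, which is again $\beta$-Lipschitz, and $\nabla g(y)=0$, so $y$ is a global minimizer of $g$. Applying the descent lemma to $g$ with $u=z$ and $w=z-\beta^{-1}\nabla g(z)$ yields $g(y)\le g\big(z-\beta^{-1}\nabla g(z)\big)\le g(z)-\tfrac{1}{2\beta}\|\nabla g(z)\|^2$. Unfolding the definition of $g$, this reads, for every $z\in\HH$,
\[
\frac{1}{2\beta}\|\nabla\smo(z)-\nabla\smo(y)\|^2\le \smo(z)-\smo(y)-\langle\nabla\smo(y),z-y\rangle .
\]

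Finally I would specialize $z=x$ in the last display, then exchange the roles of $x$ and $y$ to obtain the companion inequality, and add the two. The right-hand sides combine to $\langle\nabla\smo(x)-\nabla\smo(y),x-y\rangle$, while the left-hand sides sum to $\beta^{-1}\|\nabla\smo(x)-\nabla\smo(y)\|^2$, giving exactly $\beta^{-1}$-cocoercivity of $\nabla\smo$ in the sense of Definition \ref{d:coco}. The only mildly delicate point is the justification of the descent lemma in the possibly infinite-dimensional Hilbert space $\HH$, i.e.\ the absolute continuity and differentiation of $t\mapsto\smo(u+t(w-u))$; everything else is elementary algebra. Alternatively, one may simply invoke \cite[Theorem 18.15]{livre1}.
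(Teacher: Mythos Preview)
Your proof is correct; it is the standard self-contained argument for the Baillon--Haddad theorem via the descent lemma and the shifted function $g=\smo-\langle\nabla\smo(y),\cdot\rangle$. The paper, however, does not supply a proof at all: the lemma is stated and attributed to \cite[Theorem~18.15]{livre1}, with no argument given. So rather than taking a different route, you have written out a proof where the paper simply invokes a reference---which is exactly the alternative you yourself mention at the end. Your version has the advantage of being self-contained and makes explicit why the $\beta$-Lipschitz hypothesis suffices (via the quadratic upper bound), at the cost of a few lines the authors evidently preferred to outsource.
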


We next state the following lemma; see also \cite[Lemma 5, Chapter 2.2]{Pol87} and \cite{bach}. We will use the family of 
functions $(\varphi_{c})_{c\in\RR}$ defined in \eqref{e:bach1}. For completeness, the proof is given in the Appendix.
\begin{lemma}
\label{l:ocs}
 Let $\alpha\in\left]0,1\right]$, 
and let $c$ and $\tau$ be in $]0,+\infty[$, 
let  $(\eta_n)_{n\in\NN^{*}}$ be a strictly positive 
sequence defined by $(\forall n\in \NN^*)\; \eta_n = c n^{-\alpha}$. 
Let $(s_{n})_{n\in\NN^*}$ be such that 
\begin{equation}
\label{e:iter}
(\forall n\in \NN^*)\quad 0 \leq s_{n+1} \leq (1-\eta_n) s_n + \tau\eta_{n}^2.
\end{equation}
Let $n_0$ be the smallest integer such that $\eta_{n_0}\leq 1$ and 
set $t=  1-2^{\alpha-1} \geq  0$.
Then, 
for every $n\geq 2n_0$, 
\begin{equation}
 s_{n+1} \leq 
\begin{cases}
\Big(\tau c^2 \varphi_{1-2\alpha}(n)
 + s_{n_0}\exp\Big(\frac{cn_{0}^{1-\alpha}}{1-\alpha}\Big) \Big)
\exp\Big(\frac{-ct(n+1)^{1-\alpha}}{1-\alpha} \Big)
+ \frac{\tau 2^{\alpha}c}{(n-2)^{\alpha}}
 &\text{if $\alpha\in\left]0,1\right[$,}\\
 s_{n_0}\big(\frac{n_0}{n+1}\big)^{c}+ 
\frac{\tau c^2}{(n+1)^{c}}(1+ \frac{1}{n_0})^{c}\varphi_{c-1}(n) &\text{if $\alpha = 1$.}
\end{cases}
\end{equation}
\end{lemma}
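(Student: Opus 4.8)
The plan is to unroll the recursion \eqref{e:iter} from the index $n_0$ and then estimate the homogeneous and the particular parts separately; this is the classical pattern (cf. \cite[Lemma 5, Ch. 2.2]{Pol87} and \cite{bach}), the only novelty being the explicit tracking of constants via the functions $\varphi_c$. Since $(\eta_n)_{n\in\NN^*}$ is nonincreasing and $\eta_{n_0}\le 1$, we have $0\le 1-\eta_k<1$ for every $k\ge n_0$, so iterating \eqref{e:iter} gives
\begin{equation*}
(\forall n\ge n_0)\qquad s_{n+1}\le\Big(\prod_{k=n_0}^{n}(1-\eta_k)\Big)s_{n_0}+\tau\sum_{k=n_0}^{n}\eta_k^2\prod_{j=k+1}^{n}(1-\eta_j),
\end{equation*}
with the convention that an empty product equals $1$.

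For the homogeneous term I would use $1-x\le e^{-x}$ together with the comparison $\sum_{k=n_0}^n k^{-\alpha}\ge\int_{n_0}^{n+1}x^{-\alpha}\,dx$, whose value is $((n+1)^{1-\alpha}-n_0^{1-\alpha})/(1-\alpha)$ if $\alpha<1$ and $\log((n+1)/n_0)$ if $\alpha=1$. This gives $\prod_{k=n_0}^n(1-\eta_k)\le\exp(cn_0^{1-\alpha}/(1-\alpha))\exp(-c(n+1)^{1-\alpha}/(1-\alpha))$ when $\alpha<1$, and $\prod_{k=n_0}^n(1-\eta_k)\le(n_0/(n+1))^c$ when $\alpha=1$, i.e. exactly the first terms of the two claimed bounds.

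The particular term is where the real work lies, and I would treat the two cases differently. When $\alpha=1$, I would bound $\prod_{j=k+1}^n(1-c/j)\le\exp(-c\sum_{j=k+1}^n j^{-1})\le((k+1)/(n+1))^c$ and use $(k+1)^c\le(1+1/n_0)^c k^c$ for $k\ge n_0$, reducing the sum to $(\tau c^2(1+1/n_0)^c/(n+1)^c)\sum_{k=n_0}^n k^{c-2}$, which by a further integral comparison is controlled by $\varphi_{c-1}(n)$; this produces the second term of the $\alpha=1$ bound. When $\alpha\in\left]0,1\right[$, I would split the sum over $k$ at $m:=\lceil n/2\rceil$; the hypothesis $n\ge 2n_0$ guarantees $n_0\le m$, so the lower block $n_0\le k<m$ carries the $\varphi_{1-2\alpha}$ contribution and the upper block $m\le k\le n$ the $n^{-\alpha}$ contribution. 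In the lower block, $k+1\le(n+1)/2$ gives $(k+1)^{1-\alpha}\le 2^{\alpha-1}(n+1)^{1-\alpha}$, hence $\sum_{j=k+1}^n j^{-\alpha}\ge((n+1)^{1-\alpha}-(k+1)^{1-\alpha})/(1-\alpha)\ge t(n+1)^{1-\alpha}/(1-\alpha)$ with $t=1-2^{\alpha-1}$; factoring out $\exp(-ct(n+1)^{1-\alpha}/(1-\alpha))$ and bounding the remaining $\sum_k c^2k^{-2\alpha}$ by $c^2\varphi_{1-2\alpha}(n)$ (elementary integral comparison) produces the term $\tau c^2\varphi_{1-2\alpha}(n)\exp(-ct(n+1)^{1-\alpha}/(1-\alpha))$. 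In the upper block I would instead use the telescoping identity $\eta_k\prod_{j=k+1}^n(1-\eta_j)=\prod_{j=k+1}^n(1-\eta_j)-\prod_{j=k}^n(1-\eta_j)$, which gives $\sum_{k=m}^n\eta_k^2\prod_{j=k+1}^n(1-\eta_j)\le\eta_m\sum_{k=m}^n\eta_k\prod_{j=k+1}^n(1-\eta_j)=\eta_m\big(1-\prod_{j=m}^n(1-\eta_j)\big)\le\eta_m$; since $m\ge(n-1)/2$, $\eta_m=cm^{-\alpha}\le 2^\alpha c(n-2)^{-\alpha}$, and multiplying by $\tau$ gives the last term $2^\alpha\tau c/(n-2)^\alpha$.

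The main obstacle is the particular term for $\alpha\in\left]0,1\right[$: one has to choose the splitting point so that the lower block sits below the threshold $(n+1)/2$ — which is exactly where the assumption $n\ge 2n_0$ is used — and then recognize that the upper block is handled not by a crude bound on the tail product but by the telescoping identity above, which is what makes the upper-block contribution genuinely of order $n^{-\alpha}$ with the stated explicit constant. The remaining steps are routine integral and exponential estimates.
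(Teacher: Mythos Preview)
Your proposal is correct and follows essentially the same route as the paper: unroll the recursion from $n_0$, bound the homogeneous product via $1-x\le e^{-x}$ and the integral comparison $\sum_{k=m}^n k^{-\alpha}\ge \varphi_{1-\alpha}(n+1)-\varphi_{1-\alpha}(m)$, and for the particular term split at (roughly) $n/2$ when $\alpha<1$, using the telescoping identity $\eta_k\prod_{j=k+1}^n(1-\eta_j)=\prod_{j=k+1}^n(1-\eta_j)-\prod_{j=k}^n(1-\eta_j)$ on the upper block and the common exponential factor on the lower block, while for $\alpha=1$ bound each tail product by $((k+1)/(n+1))^c$ and pull out $(1+1/n_0)^c$. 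The only cosmetic difference is that the paper bounds all lower-block products by the single factor $\exp(-\sum_{i=m+1}^n\eta_i)$ rather than bounding each one separately as you do, but the resulting estimate is identical.
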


We start with a technical result, giving some bounds that will be repeatedly used.

\begin{proposition}
\label{p:bb}
Consider the setting of the SPG algorithm and let $\overline{w}$ be a solution of Problem \ref{e:prob2}.
Suppose that conditions (A1), (A2), and (A3) are satisfied. 
Then the following hold:
\begin{enumerate} 
\item 
\label{e:est1t}
$(\forall n\in\NN^{*}) \quad \|w_{n+1}-\overline{w}\|^2 \leq (1-\lambda_n)\|w_n-\overline{w}\|^2
+ \lambda_n \|y_n-\overline{w}\|^2.$
\item 
\label{p:bbii}
Set 
\begin{equation}
\label{eq:un}
(\forall n\in\NN^*)\quad  u_n = w_n-y_n -\gamma_n(\mathrm{G}_n-\nabla\smo(\overline{w})).
\end{equation}
Then, for every  $n\in\NN^{*}$
\begin{equation}
\label{e:est2}
\|y_n-\overline{w}\|^2 \leq \|w_n-\overline{w}\|^2 
-2\gamma_n\scal{w_n-\overline{w}}{\mathrm{G}_n - \nabla\smo(\overline{w})}+ \gamma^{2}_n\|\mathrm{G}_n - \nabla\smo(\overline{w}) \|^2-\|u_n\|^2.
\end{equation}
\item
\label{p:bbiii}
For every  $n\in\NN^{*}$
\begin{align}
\label{e:est5}
 \nonumber\E[\|y_n -\overline{w}\|^2] \leq &
\left(\E\left[\|w_n-\overline{w}\|^2\right]-2\gamma_n\left(1-\gamma_n\beta(1+2\sigma^2\alpha_n)\right)\cdot \right.\\
& \cdot\E[\scal{w_n-\overline{w}}{\nabla\smo(w_n)-\nabla \smo(\overline{w})}]
+2\gamma_n^2\sigma^2 (1+\alpha_n\|\nabla\smo(\overline{w})\|^2)\bigg).
\end{align}
\end{enumerate}
\end{proposition}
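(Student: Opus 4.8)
The plan is to establish the three bounds in order, each building on the previous one, using only the firm nonexpansiveness of the proximity operator (Lemma \ref{l:firm}), the Baillon--Haddad cocoercivity of $\nabla\smo$, the update rule \eqref{e:main1}, and conditions (A1)--(A3). For part \ref{e:est1t}, I would simply use $w_{n+1}-\overline{w} = (1-\lambda_n)(w_n-\overline{w}) + \lambda_n(y_n-\overline{w})$, apply convexity of $\|\cdot\|^2$ (valid since $\lambda_n\in[0,1]$), and drop the cross term: this gives $\|w_{n+1}-\overline{w}\|^2 \le (1-\lambda_n)\|w_n-\overline{w}\|^2 + \lambda_n\|y_n-\overline{w}\|^2$ immediately.

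For part \ref{p:bbii}, the key observation is that $\overline{w}$ solves Problem \ref{e:prob2} iff $\overline{w} = \prox_{\gamma_n\nsm}(\overline{w} - \gamma_n\nabla\smo(\overline{w}))$, so that $y_n = \prox_{\gamma_n\nsm}z_n$ with $z_n = w_n - \gamma_n\mathrm{G}_n$ and $\overline{w} = \prox_{\gamma_n\nsm}(\overline{w}-\gamma_n\nabla\smo(\overline{w}))$ are both images of the proximity operator. Applying Lemma \ref{l:firm} with $w = z_n$ and $u = \overline{w}-\gamma_n\nabla\smo(\overline{w})$ yields
\[
\|y_n-\overline{w}\|^2 \le \|z_n - (\overline{w}-\gamma_n\nabla\smo(\overline{w}))\|^2 - \|(z_n-y_n) - (\overline{w}-\gamma_n\nabla\smo(\overline{w})-\overline{w})\|^2.
\]
Here $z_n - (\overline{w}-\gamma_n\nabla\smo(\overline{w})) = (w_n-\overline{w}) - \gamma_n(\mathrm{G}_n - \nabla\smo(\overline{w}))$, and the second bracketed quantity is exactly $u_n$ as defined in \eqref{eq:un}. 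Expanding the first square as $\|w_n-\overline{w}\|^2 - 2\gamma_n\scal{w_n-\overline{w}}{\mathrm{G}_n-\nabla\smo(\overline{w})} + \gamma_n^2\|\mathrm{G}_n-\nabla\smo(\overline{w})\|^2$ gives \eqref{e:est2} directly.

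For part \ref{p:bbiii}, the plan is to take conditional expectation $\E[\,\cdot\mid\mathcal{A}_n]$ in \eqref{e:est2}, drop the term $-\|u_n\|^2$, and then take total expectation. Since $w_n$ is $\mathcal{A}_n$-measurable, condition (A1) turns the cross term into $-2\gamma_n\scal{w_n-\overline{w}}{\nabla\smo(w_n)-\nabla\smo(\overline{w})}$. For the quadratic term I would split $\mathrm{G}_n-\nabla\smo(\overline{w}) = (\mathrm{G}_n-\nabla\smo(w_n)) + (\nabla\smo(w_n)-\nabla\smo(\overline{w}))$, expand the square, use (A1) to kill the conditional cross term, bound $\E[\|\mathrm{G}_n-\nabla\smo(w_n)\|^2\mid\mathcal{A}_n]$ via (A2) by $\sigma^2(1+\alpha_n\|\nabla\smo(w_n)\|^2)$, and then use the Baillon--Haddad inequality $\|\nabla\smo(w_n)-\nabla\smo(\overline{w})\|^2 \le \beta\scal{w_n-\overline{w}}{\nabla\smo(w_n)-\nabla\smo(\overline{w})}$ to convert the smooth-part quadratic term into a multiple of the inner product. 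The term $\sigma^2\alpha_n\|\nabla\smo(w_n)\|^2$ needs care: writing $\nabla\smo(w_n) = \nabla\smo(w_n)-\nabla\smo(\overline{w}) + \nabla\smo(\overline{w})$ and using $\|a+b\|^2\le 2\|a\|^2+2\|b\|^2$ together with Baillon--Haddad again produces the $2\sigma^2\alpha_n$ coefficient multiplying $\beta$ in the factor $(1-\gamma_n\beta(1+2\sigma^2\alpha_n))$ and the residual $2\gamma_n^2\sigma^2(1+\alpha_n\|\nabla\smo(\overline{w})\|^2)$. The main obstacle is bookkeeping this last splitting correctly so that all cross terms recombine into the stated inner-product coefficient; condition (A3) is what guarantees that coefficient is nonnegative, though that sign is not actually needed for the inequality \eqref{e:est5} itself, only for its later use.
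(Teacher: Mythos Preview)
Your plan is correct and mirrors the paper's proof: convexity of $\|\cdot\|^2$ for \ref{e:est1t}, the fixed-point identity $\overline{w}=\prox_{\gamma_n\nsm}(\overline{w}-\gamma_n\nabla\smo(\overline{w}))$ together with firm nonexpansiveness for \ref{p:bbii}, and (A1)--(A2) combined with Baillon--Haddad cocoercivity for \ref{p:bbiii}. The only step you omit that the paper makes explicit is an inductive verification that $\E[\|w_n\|^2]<\infty$ for all $n$ (via nonexpansiveness of $\prox_{\gamma_n\nsm}$), needed so that the expectations in \ref{p:bbiii} are well-defined; note also that your treatment of the quadratic term---expanding and using (A1) to annihilate the conditional cross term---is actually sharper than the paper's crude bound $\|a+b\|^2\le 2\|a\|^2+2\|b\|^2$, yielding a tighter inequality that still implies \eqref{e:est5}.
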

\begin{proof}
\ref{e:est1t}: Follows from convexity of $\|\cdot\|^2$. 

\ref{p:bbii}: We have
\begin{equation}
(\forall n\in\NN^{*})\quad \overline{w} = 
\prox_{\gamma_n\nsm}(\overline{w}-\gamma_n\nabla\smo(\overline{w})).
\end{equation}
Moreover, since $\prox_{\gamma_n\nsm}$ is firmly non-expansive by Lemma \ref{l:firm}
\begin{equation}
\|y_n-\overline{w}\|^2
 \leq \|(w_n-\overline{w}) - \gamma_n (\mathrm{G}_n - \nabla\smo(\overline{w})) \|^2-\|u_n\|^2\notag\\
\end{equation}
and the statement follows.

\ref{p:bbiii}: Note that, for every $n\in\NN^*$, we have that $w_n$ and $\mathrm{G}_n$ 
are measurable with respect to $\mathcal{A}$ since they
are $\mathcal{A}_n$ measurable and by definition $\mathcal{A}_n\subset\boldsymbol{\mathcal{ A}}$. 
The same holds for $z_n$, for it is 
the difference of two measurable functions. We next show by induction that $(\forall n\in\NN^*)$  
$\|w_n\|^2$ is integrable. First, $\|w_1\|^2$ is integrable  by assumption. 
Then, assume by inductive hypothesis that
$\|w_n\|^2$ is integrable. 
Then so is $\|z_n\|^2$, for $\mathrm{G}_n$ is square integrable by assumption. Moreover, 
 $\|w_{n+1}\|^2\leq \lambda_n\|y_n\|^2
+2(1-\lambda_n) \|w_n\|^2\leq \|z_n\|^2+2\|\prox_{\gamma_n R}0\|^2 + \|w_n\|^2$, because $\prox_{\gamma_n \nsm}$ is nonexpansive. 
Therefore $\|w_{n+1}\|^2$ is integrable and hence so is $\|w_{n+1}\|$.
 This implies that $\E[\scal{w_n-\overline{w}}{\mathrm{G}_n - \nabla\smo(\overline{w})}]<+\infty$ and 
$\E[\|\mathrm{G}_n-\nabla\smo(\overline{w})\|]<+\infty$. 
Therefore,
using  assumption (A1), we obtain  
\begin{alignat}{2}
\label{e:est3}
 (\forall n\in\NN^{*})\quad \E[\scal{w_n-\overline{w}}{\mathrm{G}_n - \nabla\smo(\overline{w})}] 
&= \E[\E[\scal{w_n-\overline{w}}{\mathrm{G}_n - \nabla\smo(\overline{w})}|\mathcal{A}_n ]\notag\\
&= \E[\scal{w_n-\overline{w}}{\E[ \mathrm{G}_n - \nabla\smo(\overline{w})|\mathcal{A}_n]}]\notag\\
&=\E[\scal{w_n-\overline{w}}{\nabla\smo(w_n) - \nabla\smo(\overline{w})}].
\end{alignat}
Moreover, using the assumption (A2), we  have 
 \begin{alignat}{2}
\label{e:est4}
  \E[\|\mathrm{G}_n - &\nabla\smo(\overline{w}) \|^2] \leq 2 \E[\|\nabla\smo(w_n) - \nabla\smo(\overline{w}) \|^2]
+ 2 \E[\|\mathrm{G}_n - \nabla\smo(w_n)\|^2]\notag\\
&\leq  2 \E[\|\nabla\smo(w_n) - \nabla\smo(\overline{w}) \|^2] + 2\sigma^2(1+ \alpha_n\E[\|\nabla\smo(w_n)\|^2])\notag\\
&\leq (2+ 4\sigma^2\alpha_n)\E[\|\nabla\smo(w_n) - \nabla\smo(\overline{w}) \|^2]
+ 2\sigma^2(1 + 2\alpha_n\|\nabla\smo(\overline{w})\|^2)\notag\\
&\leq  
{ (2 + 4\sigma^2\alpha_n)}{\beta}\E[\scal{w_n-\overline{w}}{\nabla\smo(w_n) - \nabla\smo(\overline{w})}] 
+ 2\sigma^2(1+ 2\alpha_n)\|\nabla\smo(\overline{w})\|^2,
 \end{alignat}
where the last inequality follows from the fact that $\nabla\smo$  is cocoercive since 
it is Lipschitz-continuous (by the Baillon-Haddad Theorem).
The statement then follows from \eqref{e:est2}, \eqref{e:est3}, and \eqref{e:est4}.
\end{proof}

We are now ready to prove Theorem \ref{t:1}.

\begin{proof}
Since  $\mu+\nu>0$, then $\smo+\nsm$ is strongly convex. 
Hence, Problem \eqref{e:prob2} has a unique minimizer $\overline{w}$. 
Since $\gamma_n \nsm$ is $\gamma_n\nu$-strongly convex, 
 by \cite[Proposition 23.11]{livre1} $\prox_{\gamma_n \nsm}$ is $(1+\gamma_n\nu)$-cocoercive, 
and then
\begin{align}
\nonumber (\forall n\in\NN^{*})\quad\|y_n - \overline{w}\|^2  \leq \frac{1}{(1+\gamma_n\nu)^2}
\|(w_n-\overline{w}) - \gamma_n (\mathrm{G}_n - \nabla \smo(\overline{w})) \|^2.
\end{align}
Next, proceeding as in the proof of Proposition \ref{p:bb}, we get an inequality analogue to \eqref{e:est5},
that is
\begin{align}
 \nonumber\E[\|y_n -\overline{w}\|^2] \leq &\frac{1}{(1+\gamma_n\nu)^2}
\left(\E\left[\|w_n-\overline{w}\|^2\right]-2\gamma_n\left(1-\gamma_n\beta(1+2\sigma^2\alpha_n)\right)\cdot \right.\\
 \label{eq:ymenw}& \cdot\E[\scal{w_n-\overline{w}}{\nabla\smo(w_n)-\nabla \smo(\overline{w})}]
+2\gamma_n^2\sigma^2 (1+\alpha_n\|\nabla\smo(\overline{w})\|^2)\bigg).
\end{align}
Since $\smo$ is strongly convex of parameter $\mu$, it holds 
$\scal{\nabla\smo(w_n)-\nabla\smo(\overline{w})}{w_n-\overline{w}}\geq \mu\|w_n-\overline{w}\|^2$.
Therefore, from \eqref{eq:ymenw}, using the $\mu$-strong convexity of $\smo$ and 
(A3), we get
 \begin{align}
\label{eq:strmon}
\E[\|y_n -\overline{w}\|^2] & \leq 
 \frac{1}{(1+\gamma_n\nu)^2}\bigg((1-2\gamma_n\mu\epsilon)\E\left[\|w_n-\overline{w}\|^2\right]
+ 2\sigma^2\chi_n^2\bigg).
\end{align}
Hence, by definition of $w_{n+1}$,
\begin{align}
\label{e:consq}
 \E[\|w_{n+1}-\overline{w} \|^2]  
&\leq \bigg(1-\frac{\lambda_n\gamma_n(2\nu + 
\gamma_{n}\nu^2+2\mu\epsilon)}{(1+\gamma_n\nu)^2}\bigg)\E[\|w_n-\overline{w}\|^2]+  
\frac{ 2\sigma^2\chi^{2}_n}{(1+\gamma_n\nu)^2}.
\end{align}
Let $\gamma_n=c_1n^{-\theta}$ and fix $n\geq n_0$. Since $\gamma_n\leq \gamma_{n_0}=c_1 n_0^{-\theta}\leq 1$, we have
\begin{equation}\label{eq:l1}
\frac{\lambda_n\gamma_n(2\nu + \gamma_{n}\nu^2+2\mu\epsilon)}{(1+\gamma_n\nu)^2} \geq 
\frac{2\underline{\lambda}(\nu+\mu\varepsilon)}{(1+\nu)^2} \gamma_n=c n^{-\theta},
\end{equation}
where we set $c=c_1{2\underline{\lambda}(\nu+\mu\varepsilon)}/{(1+\nu)^2}$.
On the other hand, 
\begin{equation}\label{eq:l2}
\frac{ 2\sigma^2\chi^{2}_n}{(1+\gamma_n\nu)^2}\leq {2\sigma^2 (1+\overline{\alpha} \|\nabla \smo(\overline{w})\|)} c_1^2n^{-2\theta}\,. 
\end{equation}
Then, putting together \eqref{e:consq}, \eqref{eq:l1}, and \eqref{eq:l2}, we get
$ \E[\|w_{n+1}-\overline{w} \|^2]  
\leq (1-\eta_n)\E[\|w_n-\overline{w}\|^2]+ \tau\eta_n^2$,
with $\tau=2\sigma^2c_1^2 (1+\overline{\alpha} \|\nabla \smo(\overline{w})\| )/c^2$ and $\eta_n=c n^{-\theta}$.
Finally, \eqref{eq:Est1} follows from Lemma \ref{l:ocs}.
\end{proof}

In order to prove Theorem \ref{t:2}, we start by giving the definition of deterministic and random quasi-Fej\'er sequences.  
We denote   by $\ell_+^1(\NN)$ the set of summable sequences in  $\RP$. 

\begin{definition}\cite{Ermol68}
 Let $S$ be a non-empty subset of $\HH$ and let $(\varepsilon_n)_{n\in\NN^*}$ be a sequence in $\ell_{+}^1(\NN^*)$. 
Then,
\begin{itemize}
 \item[(i)]A sequence $(w_n)_{n\in\NN^*}$ in $\HH$ is deterministic quasi-Fej\'er monotone with respect to the target set $S$ if
\begin{equation}
 (\forall w\in S)(\forall n\in\NN^*)\quad \|w_{n+1}-w\|^2 \leq \|w_n-w\|^2 + \varepsilon_n.
\end{equation}
  \item[(ii)] 
 A sequence of random vectors $(w_n)_{n\in\NN^*}$ in $\HH$ is stochastic
 quasi-Fej\'er monotone with respect to the target set $S$ if $\E[\|w_1\|^2] < +\infty$ and
\begin{equation}
 (\forall w\in S)(\forall n\in\NN^*)\quad \E[\|w_{n+1}-w\|^2|\sigma(w_1,\ldots, w_n)] \leq \|w_n-w\|^2 + \varepsilon_n.
\end{equation}
\end{itemize}
\end{definition}
The following result has been stated in \cite{Barty07} without a proof. For the sake of completeness, 
 a proof is given in the Appendix.
\begin{proposition}{\rm \cite[Lemma 2.3]{Barty07}}
\label{p:fejer}
 Let $S$ be a non-empty closed subset of $\HH$, 
let $(\varepsilon_n)_{n\in\NN^{*}}\in\ell_{+}^{1}(\NN^{*})$.
Let  $(w_n)_{n\in\NN^{*}}$ be a sequence of random vectors in $\HH$ such that $\E[\|w_1\|^2] < +\infty$, and let
$\mathcal{A}_n=\sigma(w_1,\ldots,w_n)$. Assume that  
\begin{equation}
\label{e:fejer}
(\forall n\in\NN^{*})\quad \E[\|w_{n+1}-w\|^2|\mathcal{A}_n] 
\leq \|w_n-w\|^2 + \varepsilon_n 
\quad \text{a.s}.
\end{equation}
Then the following hold.
\begin{enumerate}
\item \label{p:fejeri}
Let $w\in S$. 
 Then,
$(\E[\|w_n -w\|^2])_{n\in\NN^{*}}$ 
converges to some $\zeta_w\in\RR$ and $(\|w_n-w\|^2)_{n\in\NN^{*}}$
converges a.s. to an integrable random vector $\xi_w$.
\item \label{p:fejerii}
 $(w_n)_{n\in\NN^{*}}$ is bounded a.s.
\item \label{p:fejeriii}
The set of weak cluster points of $(w_n)_{n\in\NN^{*}}$ is non-empty a.s.
\end{enumerate}
\end{proposition}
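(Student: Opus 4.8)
The plan is to reduce all three claims to a single application of the supermartingale convergence theorem, after some elementary integrability bookkeeping.

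First I would fix $w\in S$ and check by induction that $\E[\|w_n-w\|^2]<+\infty$ for every $n\in\NN^*$: indeed $\E[\|w_1-w\|^2]\leq 2\E[\|w_1\|^2]+2\|w\|^2<+\infty$ by hypothesis, and taking total expectation in \eqref{e:fejer} yields $\E[\|w_{n+1}-w\|^2]\leq\E[\|w_n-w\|^2]+\varepsilon_n$, so the bound propagates, the tail of $(\varepsilon_n)_{n\in\NN^*}$ being summable. Then I would introduce the remainder $t_n=\sum_{k\geq n}\varepsilon_k$, which is finite and decreases to $0$ since $(\varepsilon_n)_{n\in\NN^*}\in\ell_{+}^{1}(\NN^{*})$, and set $Z_n=\|w_n-w\|^2+t_n$. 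Each $Z_n$ is nonnegative, $\mathcal{A}_n$-measurable and integrable, and \eqref{e:fejer} gives
\[
\E[Z_{n+1}\mid\mathcal{A}_n]\leq \|w_n-w\|^2+\varepsilon_n+t_{n+1}=Z_n\quad\text{a.s.,}
\]
so that $(Z_n,\mathcal{A}_n)_{n\in\NN^*}$ is a nonnegative supermartingale.

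From here, Doob's supermartingale convergence theorem gives that $Z_n$ converges a.s. to an integrable random variable; since $t_n\to0$, this forces $\|w_n-w\|^2\to\xi_w$ a.s. for some integrable $\xi_w$. For the deterministic part of \ref{p:fejeri}, I would note that $(\E[Z_n])_{n\in\NN^*}$ is nonincreasing and bounded below by $0$, hence convergent, and since $\E[\|w_n-w\|^2]=\E[Z_n]-t_n$, the sequence $(\E[\|w_n-w\|^2])_{n\in\NN^*}$ converges to some $\zeta_w\in\RR$. Claim \ref{p:fejerii} then follows by applying \ref{p:fejeri} to a single, arbitrarily fixed point $w_0\in S$ (which exists since $S\neq\emp$): on the probability-one event where $(\|w_n-w_0\|^2)_{n\in\NN^*}$ converges, the sequence $(w_n)_{n\in\NN^*}$ is bounded. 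Finally, for \ref{p:fejeriii}, since $\HH$ is a separable Hilbert space, bounded sequences are relatively weakly sequentially compact; hence on the same probability-one event $(w_n)_{n\in\NN^*}$ admits a weakly convergent subsequence, whose limit is a weak cluster point, so the set of weak cluster points is non-empty a.s.

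The steps here are all routine; the only points requiring a little care are the a priori integrability of $\|w_n-w\|^2$ needed before invoking Doob's theorem, and the observation that for \ref{p:fejerii}--\ref{p:fejeriii} it is enough to control the distance to one fixed point of $S$, so that the argument never has to deal simultaneously with the uncountably many exceptional null sets attached to the different $w\in S$.
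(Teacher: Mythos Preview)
Your proof is correct and follows essentially the same route as the paper: both add the tail $\sum_{k\geq n}\varepsilon_k$ to $\|w_n-w\|^2$ to obtain a nonnegative supermartingale, invoke a supermartingale convergence theorem for the almost sure part of \ref{p:fejeri}, and deduce \ref{p:fejerii}--\ref{p:fejeriii} from \ref{p:fejeri}. The only cosmetic differences are that the paper appeals to a deterministic quasi-Fej\'er lemma for the convergence of $(\E[\|w_n-w\|^2])_{n\in\NN^*}$ rather than your direct monotonicity argument for $(\E[Z_n])_{n\in\NN^*}$, and is less explicit than you are about integrability and about fixing a single $w_0\in S$ for \ref{p:fejerii}--\ref{p:fejeriii}.
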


We next collect some convergence results that will be
useful in the proof of the main Theorem \ref{t:2}.  

\begin{proposition}
\label{p:1}
Suppose that (A1), (A2), (A3), and (A4) are satisfied.
Let $(w_n)_{n\in\NN^{*}}$ be a sequence generated by Algorithm \ref{a:main}.
Then, for any solution $\overline{w}$ of the problem \eqref{e:prob2}, the following hold:
\begin{enumerate}
\item \label{p:1i}
The sequence $(\E[\| w_n-\overline{w}\|^{2}])_{n\in\NN^{*}}$ converges to a finite value.
\item 
\label{t:2i}
The sequence $(\|w_n-\overline{w}\|^2)_{n\in\NN^{*}}$ 
converges a.s to some integrable random variable $\zeta_{\overline{w}}$.
\item \label{p:1iii}
$\sum_{n\in\NN^{*}}\lambda_n\gamma_n \E[\scal{w_n-\overline{w}}{\nabla\smo(w_n) - 
\nabla\smo(\overline{w})}] < \pinf$.
Consequently, 
\begin{equation*}
\varliminf_{n\to\infty}\E[\scal{w_n-\overline{w}}{\nabla\smo(w_n)-\smo(\overline{w})}] = 0
\quad \text{and} \quad
 \varliminf_{n\to\infty}\E[\|\nabla\smo(w_n)-\nabla\smo(\overline{w})\|^2] = 0.  
\end{equation*}
\item\label{p:1ii}  
$\sum_{n\in\NN^{*}}
 \lambda_n\E[\|w_n-y_n-\gamma_n(\mathrm{G}_n-\nabla\smo(\overline{w}))\|^2] < \pinf $
and 
$\sum_{n\in\NN^{*}} \lambda_n\E[\|w_n-y_n \|^2] < +\infty$.
\end{enumerate}
\end{proposition}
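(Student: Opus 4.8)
The plan is to derive a single master recursion in expectation of the form $\E[\|w_{n+1}-\overline w\|^2]\le \E[\|w_n-\overline w\|^2]-a_n+b_n$ where $(b_n)$ is summable and $a_n\ge 0$, and then read off all four assertions from it. First I would combine Proposition \ref{p:bb}\ref{e:est1t} with \ref{p:bbiii} to obtain, for every $n\in\NN^*$,
\begin{equation*}
\E[\|w_{n+1}-\overline w\|^2]\le \E[\|w_n-\overline w\|^2]-2\lambda_n\gamma_n\bigl(1-\gamma_n\beta(1+2\sigma^2\alpha_n)\bigr)\E[\scal{w_n-\overline w}{\nabla\smo(w_n)-\nabla\smo(\overline w)}]+2\sigma^2\chi_n^2,
\end{equation*}
and then use (A3), which gives $1-\gamma_n\beta(1+2\sigma^2\alpha_n)\ge\epsilon>0$, to make the middle term a genuine nonnegative quantity times $\lambda_n\gamma_n$. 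Note $\chi_n^2=\lambda_n\gamma_n^2(1+2\alpha_n\|\nabla\smo(\overline w)\|^2)$ and $(A4)$ gives $\sum_n\chi_n^2<+\infty$, so the perturbation is summable.

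From this recursion, assertion \ref{p:1i} follows by the elementary fact that a nonnegative sequence satisfying $s_{n+1}\le s_n+b_n$ with $\sum b_n<\infty$ converges; assertion \ref{p:1iii} follows by summing the recursion over $n$, which bounds $\sum_n 2\epsilon\,\lambda_n\gamma_n\E[\scal{w_n-\overline w}{\nabla\smo(w_n)-\nabla\smo(\overline w)}]$ by $\E[\|w_1-\overline w\|^2]+2\sigma^2\sum_n\chi_n^2<+\infty$, and since $\sum_n\lambda_n\gamma_n=+\infty$ by $(A4)$, the $\varliminf$ of the summand is zero; the corresponding statement for $\E[\|\nabla\smo(w_n)-\nabla\smo(\overline w)\|^2]$ then follows from cocoercivity of $\nabla\smo$ (Baillon--Haddad), which gives $\E[\|\nabla\smo(w_n)-\nabla\smo(\overline w)\|^2]\le\beta\,\E[\scal{w_n-\overline w}{\nabla\smo(w_n)-\nabla\smo(\overline w)}]$. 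For assertion \ref{p:1ii} I would go back one step and keep the $-\|u_n\|^2$ term from Proposition \ref{p:bb}\ref{p:bbii}: combining \ref{e:est1t} with the exact inequality \eqref{e:est2} (rather than the version where $\|u_n\|^2$ has been discarded), taking conditional expectation and using (A1)--(A3), yields a recursion with an extra $-\lambda_n\E[\|u_n\|^2]$ on the right-hand side, so summing gives $\sum_n\lambda_n\E[\|u_n\|^2]<+\infty$; then $\|w_n-y_n\|^2\le 2\|u_n\|^2+2\gamma_n^2\|\mathrm{G}_n-\nabla\smo(\overline w)\|^2$ together with the bound on $\E[\|\mathrm{G}_n-\nabla\smo(\overline w)\|^2]$ from \eqref{e:est4} and summability of $\chi_n^2$ gives $\sum_n\lambda_n\E[\|w_n-y_n\|^2]<+\infty$.

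Assertion \ref{t:2i} is the only one requiring a genuinely probabilistic argument rather than just manipulation of expectations: here I would invoke Proposition \ref{p:fejer}. The point is that the same computation leading to the master recursion, carried out with conditional expectation $\E[\,\cdot\,|\mathcal A_n]$ instead of full expectation and without discarding the nonnegative middle term, shows that $(w_n)_{n\in\NN^*}$ is stochastic quasi-Fej\'er monotone with respect to the singleton $\{\overline w\}$ with perturbation $\varepsilon_n=2\sigma^2\chi_n^2\in\ell_+^1(\NN^*)$; Proposition \ref{p:fejer}\ref{p:fejeri} then yields a.s.\ convergence of $(\|w_n-\overline w\|^2)_{n\in\NN^*}$ to an integrable random variable. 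The main obstacle I anticipate is bookkeeping rather than conceptual: one must be careful that the integrability conditions needed to justify the conditional-expectation identities ($\E[\|w_n\|^2]<\infty$ for all $n$, hence finiteness of the cross terms) are in place --- but this is exactly the induction already carried out in the proof of Proposition \ref{p:bb}\ref{p:bbiii}, so it can be cited. The other mild subtlety is that in \ref{p:1iii} passing from $\sum_n\lambda_n\gamma_n(\cdots)<\infty$ with $\sum_n\lambda_n\gamma_n=\infty$ to $\varliminf(\cdots)=0$ uses only that a series with divergent "weights" and finite weighted sum of nonnegative terms must have the terms' liminf equal to zero.
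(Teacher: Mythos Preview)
Your proposal is correct and follows essentially the same route as the paper: derive the master recursion by combining Proposition~\ref{p:bb}\ref{e:est1t}--\ref{p:bbiii} with (A3), read off \ref{p:1i} and \ref{p:1iii} from it (the latter via cocoercivity), retain the $-\lambda_n\E[\|u_n\|^2]$ term to get the first half of \ref{p:1ii}, and use the conditional-expectation version together with Proposition~\ref{p:fejer} for \ref{t:2i}. The only place where the paper is slightly more explicit is in the second half of \ref{p:1ii}: to show $\sum_n\lambda_n\gamma_n^2\E[\|\mathrm G_n-\nabla\smo(\overline w)\|^2]<\infty$ from \eqref{e:est4}, one needs to control the first term on the right of \eqref{e:est4}; the paper does this by invoking the already-established boundedness of $\E[\|w_n-\overline w\|^2]$ from \ref{p:1i} (via Lipschitz continuity of $\nabla\smo$), and you should make this step explicit when you write out the proof.
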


\begin{proof}
By Proposition~\ref{p:bb}\ref{e:est1t}-\ref{p:bbiii}, 
and by condition (A3), we get
\begin{alignat}{2}
\label{e:cons}
 \E[\|w_{n+1}&-\overline{w} \|^2] 
\leq (1-\lambda_n) \E[\|w_n-\overline{w}\|^2] + \lambda_n \E[\|y_n-\overline{w}\|^2] \notag\\
&\leq \E[\|w_n-\overline{w}\|^2] 
- 2\varepsilon \gamma_n\lambda_n \E[\scal{w_n-\overline{w}}{\nabla\smo(w_n)-\nabla\smo(\overline{w})}] 
 + 2\sigma^2\chi^{2}_n -\lambda_n \E[\|u_n\|^2]\notag\\
&\leq \E[\|w_n-\overline{w}\|^2]  + 2\sigma^2\chi^{2}_n,
\end{alignat}
where the last inequality follows by the monotonicity of $\nabla \smo$.

\ref{p:1i}:
Since the sequence $(\chi^{2}_n)_{n\in\NN^{*}}$ is summable by assumption (A4), we derive 
from \eqref{e:cons} that  $(\E[\|w_{n+1}-\overline{w} \|^2])_{n\in\NN^{*}}$ converges to a finite value.

\ref{t:2i}: 
We estimate the conditional expectation with respect to $\mathcal{A}_n$ 
 of each term in the right hand side of \eqref{e:est2}. 
Since $w_n$ is $\mathcal{A}_n$-measurable, we have 
\begin{equation}\label{eq:alm4}
 \E[\|w_n-\overline{w}\|^2| \mathcal{A}_n] = \|w_n-\overline{w}\|^2.
\end{equation}
Using assumption (A1),
\begin{alignat}{2}
\label{e:alm2}
 (\forall n\in\NN^{*})\quad \E[\scal{w_n-\overline{w}}{\mathrm{G}_n 
- \nabla\smo(\overline{w})}| \mathcal{A}_n] 
&= \scal{w_n-\overline{w}}{\E[ \mathrm{G}_n - \nabla\smo(\overline{w})| \mathcal{A}_n }\notag\\
&=\scal{w_n-\overline{w}}{\nabla\smo(w_n) - \nabla\smo(\overline{w})}. 
\end{alignat}
Next, note that $\nabla\smo(w_n)$ is $\mathcal{A}_n$-measurable by (A1), and therefore by $(A2)$, 
we get
 \begin{alignat}{2}
\label{e:alm3}
  \E[\|\mathrm{G}_n - &\nabla\smo(\overline{w}) \|^2| \mathcal{A}_n] 
\leq 2 \E[\|\nabla\smo(w_n) - \nabla\smo(\overline{w}) \|^2| \mathcal{A}_n]
+ 2 \E[\|\mathrm{G}_n - \nabla\smo(w_n)\|^2 |\mathcal{A}_n  ]\notag\\
&\leq  2\|\nabla\smo(w_n) - \nabla\smo(\overline{w}) \|^2
 + 2\sigma^2(1+\alpha_n\|\nabla\smo(w_n)\|^2)\notag\\
&\leq  2\|\nabla\smo(w_n) - \nabla\smo(\overline{w}) \|^2+
2\sigma^2(1+ 2\alpha_n  \|\nabla\smo(w_n) - \nabla\smo(\overline{w}) \|^2+ 2\alpha_n\|\nabla\smo(\overline{w})\|^2 )\notag\\
&\leq {(2 + 4\sigma^2\alpha_n )}{\beta} \scal{w_n-\overline{w}}{\nabla\smo(w_n) - \nabla\smo(\overline{w})} 
+ 2\sigma^2(1+ 2\alpha_n\|\nabla\smo(\overline{w})\|^2),
 \end{alignat}
where the last inequality follows from the cocoercivity
 of $\nabla\smo$. 
Taking the conditional expectation with respect to $\mathcal{A}_n$, and invoking \eqref{e:est2}, \eqref{eq:alm4}, 
\eqref{e:alm2}, and  \eqref{e:alm3}, we obtain, 
\begin{alignat}{2}
\label{e:concl2}
 \E[\|w_{n+1}-&\overline{w} \|^2|\mathcal{A}_n] 
\leq (1-\lambda_n) \|w_n-\overline{w}\|^2 
+ \lambda_n \E[\|y_n-\overline{w}\|^2|\mathcal{A}_n ] \notag\\
&\leq \|w_n-\overline{w}\|^2
- 2\gamma_n\lambda_n(1-{\beta\gamma_n(1+2\sigma^2\alpha_n)})\scal{\nabla\smo(w_n)-\nabla\smo(\overline{w})}{ w_n-\overline{w}}
\notag\\
&\quad + 2\sigma^2\chi^{2}_n-\lambda_n\E[ \|u_n\|^2|\mathcal{A}_n]\notag\\
&\leq \|w_n-\overline{w}\|^2
- 2\varepsilon \gamma_n\lambda_n \scal{\nabla\smo(w_n)-\nabla\smo(\overline{w})}{ w_n-\overline{w}} + 
2\sigma^2\chi^{2}_n-\lambda_n\E[ \|u_n\|^2|\mathcal{A}_n]\notag\\
&\leq \|w_n-\overline{w}\|^2 + 2\sigma^2\chi^{2}_n\,.
\end{alignat}
Hence, $(w_n)_{n\in\NN^{*}}$ is a random quasi-Fej\'er sequence with respect to the nonempty closed and convex set
$\Argmin \obj$. 

Taking into account that $\E[\|w_1\|^2]<+\infty$  by assumption, it follows from Proposition \ref{p:fejer}\ref{p:fejeri} that
$(\|w_n-\overline{w}\|^2)_{n\in\NN^{*}}$ converges a.s
to some integrable random variable $\zeta_{\overline{w}}$. 

\ref{p:1iii}: We derive from \eqref{e:cons} that 
\begin{equation}
\label{e:ese1}
\sum_{n\in\NN^{*}}
\gamma_n\lambda_n \E[\scal{w_n-\overline{w}}{\nabla\smo(w_n)-\nabla\smo(\overline{w})}] < +\infty.
\end{equation}
Since $\sum_{n\in\NN^{*}}\lambda_n\gamma_n = +\infty$, we obtain
\begin{equation}
 \varliminf_{n\to\infty}\E[\scal{w_n-\overline{w}}{\nabla\smo(w_n)-\nabla\smo(\overline{w})}] = 0
\quad \Rightarrow  \varliminf_{n\to\infty}\E[\|\nabla\smo(w_n)-\nabla\smo(\overline{w}) \|^2] =0,
\end{equation}
using again the cocoercivity of $\nabla \smo$.

\ref{p:1ii} We directly get from \eqref{e:cons} that $\sum_{n\in\NN^*} \lambda_n \|u_n\|^2 <+\infty$.

Since $\nabla\smo$ is Lipschitz-continuous, 
and $(\E[\|w_n-\overline{w}\|^2])_{n\in\NN^*}$ is convergent by \ref{p:1i},
there exists $M\in\,]0,+\infty[$ such that
\begin{alignat}{2}
(\forall n\in\NN^{*})\quad
 \E[\scal{w_n-\overline{w}}{ \nabla\smo(w_n)-\nabla\smo(\overline{w})}] \leq \beta 
\E[\|w_n-\overline{w}\|^2] \leq M < +\infty.
\end{alignat}
Hence, we derive from \eqref{e:est4} and \eqref{e:suma} that
\begin{equation}
\label{e:abc1}
 \sum_{n\in\NN^{*}} \lambda_n\gamma^{2}_n 
\E[\|\mathrm{G}_n-\nabla\smo(\overline{w})\|^2] < +\infty.
\end{equation}
 Now, recalling the definition of $u_n$ in \eqref{eq:un}, using \eqref{e:abc1} and  \eqref{e:cons}, we obtain
\begin{equation}
 \sum_{n\in\NN^{*}}\lambda_n\E[\|w_n-y_n\|^2] \leq 2\sum_{n\in\NN} \lambda_n
\E[\|u_n\|^2]
+ 2\sum_{n\in\NN^{*}} \lambda_n\gamma^{2}_n 
\E[\|\mathrm{G}_n-\nabla\smo(\overline{w})\|^2] < +\infty.
\end{equation}
\end{proof}

\begin{proof}[of Theorem \ref{t:2}]
 Since $\smo$ is uniformly convex at $\overline{w}$, 
 there exists  
$\phi\colon\left[0,+\infty\right[\to \left[0,+\infty\right[$ increasing and  
vanishing only at $0$ such that 
\begin{equation}
 \scal{\nabla\smo(w_n)-\nabla\smo(\overline{w})}{w_n-\overline{w}} \geq \phi(\|w_n-\overline{w}\|).
\end{equation}
Therefore, we derive from Proposition \ref{p:1} \ref{p:1iii} that $\sum_{n\in\NN^{*}} \lambda_n\gamma_n\E[\phi(\|w_n-\overline{w}\|)]  < \infty,$
 and hence 
\begin{equation}
 \sum_{n\in\NN^{*}} 
\lambda_n\gamma_n\phi(\|w_n-\overline{w}\|)  < \infty\quad \text{a.s.}
\end{equation}
Since  $(\lambda_n\gamma_n)_{n\in\NN^{*}}$ is not summable,
we have $\varliminf\phi(\|w_n-\overline{w}\|) =0$ a.s. 
Consequently, there exists a subsequence $(k_n)_{n\in\NN^{*}}$ 
such that $ \phi(\|w_{k_n}-\overline{w}\|)\to0$ a.s,
which implies that $\|w_{k_n}-\overline{w}\| \to 0$ a.s. 
In view of Proposition \ref{p:1}\ref{t:2i}, we get $w_n\to \overline{w}$ a.s.
\end{proof}
\begin{proof}[of Theorem \ref{t:3}]
By Proposition \ref{p:1}\ref{p:1i}, 
$(\|w_n-\overline{w}\|^2)_{n\in\NN^*}$ converges to an integrable random variable, hence it is uniformly
 bounded. Moreover,
$\varliminf \E[\|\nabla\smo(w_n)-\nabla\smo(\overline{w})\|^2]= 0$, and hence there exists 
a subsequence $(k_n)_{n\in\NN^{*}}$ such that $\lim_{n\to\infty}\E[\|\nabla\smo(w_{k_n})-\nabla\smo(\overline{w})\|^2]= 0.$
Thus, there
exists a subsequence $(p_{_n})_{n\in\NN^{*}}$ of $(k_n)_{n\in\NN^{*}}$ such that 
\begin{equation}
\label{e:gen1}
 \|\nabla\smo(w_{p_n})-\nabla\smo(\overline{w})\|^2 \to 0 
\quad \text{a.s.} 
\end{equation}
Let $\overline{z}$ be a weak cluster point of $(w_{p_n})_{n\in\NN^{*}}$, 
then there exists a subsequence 
$(w_{q_{p_n}})_{n\in\NN^{*}}$ 
 such that for almost all $\omega$,
$ w_{q_{p_n}}(\omega)\weakly \overline{z}(\omega) $.
Since $\nabla\smo$ is weakly continuous, for almost all $\omega$,
 $\nabla\smo (w_{q_{p_n}}(\omega)) \weakly \nabla\smo(\overline{z}(\omega))$. 
Therefore,  for almost every $\omega$, by \eqref{e:gen1}, $\nabla\smo(\overline{w}) = \nabla\smo(\overline{z}(\omega))$,
and hence
$$\scal{\nabla\smo(\overline{z}(\omega))-\nabla\smo(\overline{w})}{\overline{z}(\omega)-\overline{w}}  =0.$$ 
Since $\smo$ is strictly convex, $\nabla \smo$ is strictly monotone,
 we obtain $\overline{w} = \overline{z}(\omega)$. 
This shows that $w_{q_{p_n}} \weakly \overline{w}$ a.s.
\end{proof}

\begin{proof}[of Remark \ref{r:3}] 
Let $w$ be a weak cluster point of $(w_n)_{n\in\NN^{*}}$,
i.e., there exists a subsequence $(w_{k_n})_{n\in\NN^{*}}$
such that $w_{k_n} \weakly w$ a.s. Since $T = \smo +\nsm$ is convex and lower semicontinous, 
it is weakly lower semicontinous, hence
\begin{equation}
T(w)\leq \varliminf T(w_{k_n}) = \inf T,
\end{equation}
which shows that $w\in\Argmin T$ a.s.  
We therefore conclude that 
$(w_n)_{n\in\NN^{*}}$ converges weakly to an optimal solution a.s.
\end{proof}

\section{Numerical experiments}\label{s:exp}
In this section we first present  numerical experiments  aimed at studying the computational
performance of the SPG algorithm (see Algorithm  \ref{a:main}), with respect to the step-size,
the strong convexity constant, and the noise level. Then we compare the proposed method with 
other state-of-the-art  stochastic first order methods: an accelerated stochastic proximal gradient 
method, called SAGE \cite[Theorem 2]{Hu} and the FOBOS algorithm \cite{Duchi09}. 
\subsection{Properties of SPG}
In order to study the behavior of the SPG algorithm with respect to the relevant parameters of the 
optimization problem, we focus on a toy example, where the exact solution is known.
More specifically, we consider the following minimization problem on the real line:
\begin{equation}
\label{e:cyc1}
  \underset{w\in \RR}{\text{minimize}}\; \phi(w) :=\frac{\mu}{2}|w-10|^2 + 0.02|w-10|.
\end{equation}
It is clear that $\phi$ is $\mu$-strongly convex function with $w_{opt}=\argmin\phi = \{10\}$ and the optimal value 
$\overline{\phi} =0$. We consider a stochastic perturbation of the exact gradient of the function 
$\smo = \frac{1}{2}|\cdot-10|^2 $ of the form
\begin{equation}\label{eq:stgr}
 \mathrm{G}_n = \nabla\smo(w_n) + s_n, 
\end{equation}
where $s_n$ is a realization of a Gaussian random variable with $0$ mean and $\sigma^2$  variance. 
We apply SPG one hundred times  for 100 independent realizations of the random
process $(s_n)_{n\in\mathbb{N}^*}$ to problem \eqref{e:cyc1} with $(\forall n\in\NN^*)$ $\lambda_n=1$
and $\gamma_n=C/n$ for some constant $C>0$.  
We evaluate the average performance of SPG over the first 100 iterations for different values of the strong convexity 
parameter $\mu$, and several values of $\sigma$ and $C$, and by measuring $|w_n-10|$.
The results are displayed in Figure  \ref{fig: orbit1pcabc}. As can be seen by visual inspection,
the convergence is faster when $\mu$ is bigger and when the noise variance is smaller. Moreover, 
the constant $C$ in the step-size heavily influence the convergence 
behavior. The latter is a well-known phenomenon in the context of stochastic optimization \cite{Nem09}, 
 
\begin{figure}[!ht]
\centering
\begin{tabular}{ccc}
 \includegraphics[width=4cm]{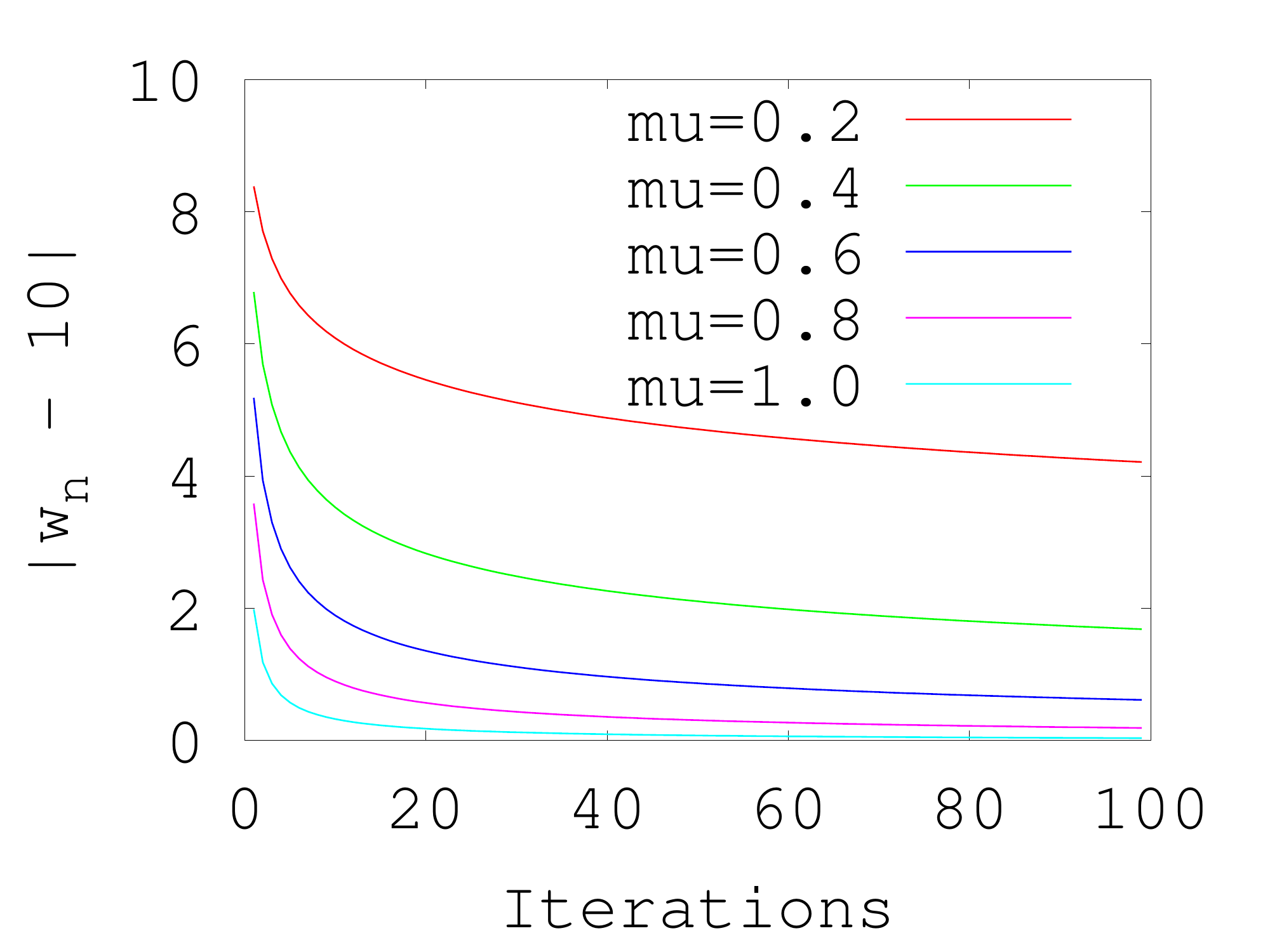}& \includegraphics[width=4cm]{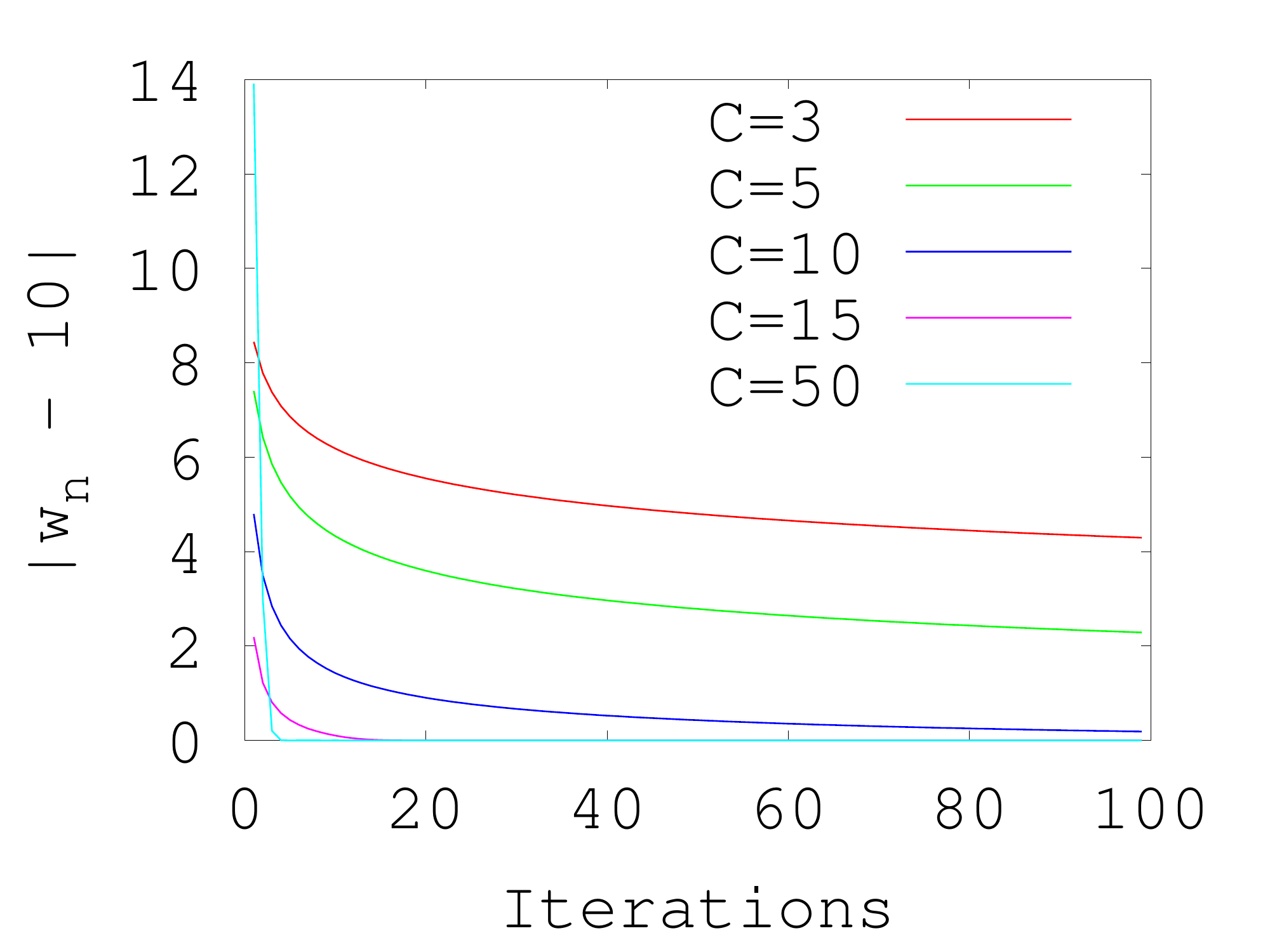}& \includegraphics[width=4cm]{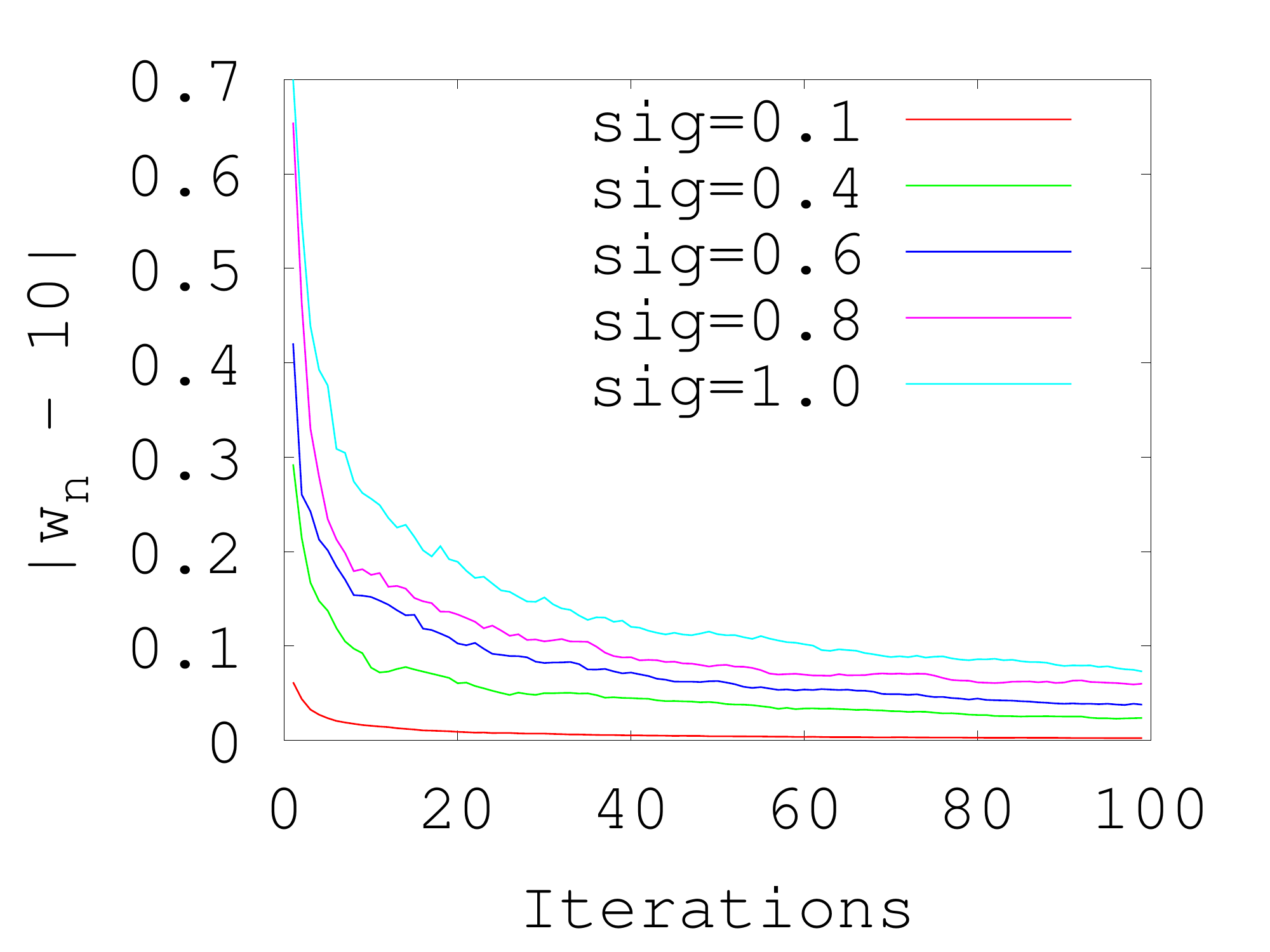}
 \end{tabular}
 \caption{Performance evaluation of  Algorithm \ref{a:main} with respect to different choices of  $\mu$, setting $\gamma_n=0.8/n$ and $\sigma=0.01$ (left), with respect to different choices of $C$, with $\gamma_n=C/n$, $\mu=0.05$ and $\sigma=0.01$ (center), and with respect to different choices of $\sigma$, for $\mu=0.05$  and $\gamma_n=1/n$(right).
 \label{fig: orbit1pcabc} }
 \end{figure}
\subsection{Comparison with other methods}
In this section we compare  SPG  with the SAGE algorithm \cite[Algorithm 1]{Hu} and the FOBOS algorithm in \cite{Duchi09}. 
We note that the main difference between SPG (with  $\lambda_n=1$ for every $n\in\NN^*$) 
and FOBOS is  that the latter takes the average
of the previous iterates. More precisely the sequence generated by the FOBOS iteration  is the following
\begin{equation}\label{eq:sssss}
w^{(av)}_n = \Big(\sum_{k=1}^n \eta_k\Big)^{-1}\sum_{k=1}^n \eta_kw_k, \quad \eta_k = C_1/k,
\end{equation}
where $(w_k)_{k\in\NN^*}$ is the sequence generated by the SPG algorithm. In \cite{Duchi09} it 
is assumed that the gradient of the smooth term is bounded on the whole space. 
In our experiments this assumption is not satisfied, but since  the sequence
 of iterates is bounded, the algorithm can be applied and its convergence is guaranteed.
One advantage of the SAGE algorithm is that it does require any parameter tuning, since it does 
not have  any  free parameter. SPG and FOBOS instead require the choice of the stepsize. 
We check the accuracy of the three algorithms on different elastic net regularized problems 
with respect to the number of iterations, since the cost  per iteration is basically the same for the three 
procedures. 

\subsection{Toy example}
 We first consider the toy example presented in the previous section (see equation \eqref{e:cyc1}),
 where  we set  $\mu=1$. Moreover, we assume that in \eqref{eq:stgr} $s_n$ is a realization of a Gaussian 
 random variable with $0$ mean and $0.1$ variance. We run SPG, SAGE, and FOBOS one hundred times for one hundred 
 independent realizations of the random process $(s_n)_{n\in\NN^*}$.
In SPG, we chose $\gamma_n=1/n$, and $\lambda_n=1$.
 Finally, after testing  the FOBOS algorithm  for different choices of the constant
$C_1$  defining the stepsize,  we got that $\eta_k=1/k$ for every $k\in\NN^*$ gave the best results.
 The behavior of  the sequences $|w_n-10|$  corresponding to the three algorithms  on the first 1000 iterations
 is presented in Figure~\ref{fig: orbit1pc}.  SPG and SAGE have a similar behavior, while FOBOS is 
 slower.
 \begin{figure}[!ht]
   \centering     
 \includegraphics[width=8cm]{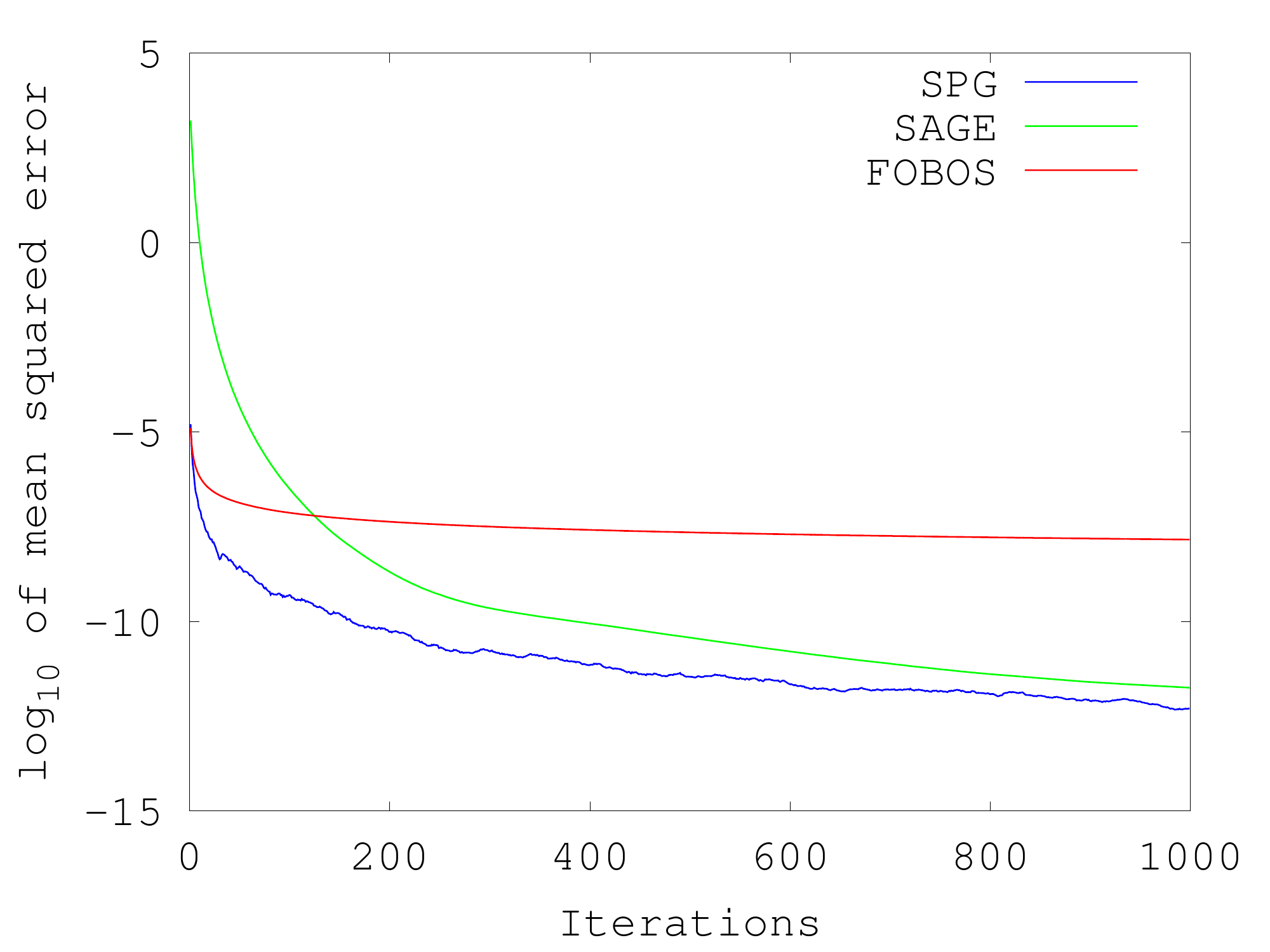}
 \caption{Convergence  of  SPG, SAGE, and FOBOS. The performance of SAGE and SPG is comparable, while
 FOBOS is slower on this example.  
 \label{fig: orbit1pc}}
 \end{figure}
\subsection{Regression problems with random design}
Let $N$ and $p$ be strictly positive integers. 
Concerning the data generation protocol, the input points  $(x_i)_{1\leq i\leq N}$
 are uniformly drawn in the interval $\left[a,b\right]$ (to be specified later in the two cases we consider). 
 For a suitably chosen finite dictionary 
 of real valued functions $(\phi_k)_{1\leq k\leq p}$ 
 defined on $\left[a,b\right]$, the labels are computed using a noise-corrupted regression
 function, namely
\begin{equation}
(\forall i \in \{1,\ldots, N\})\quad y_i =  \sum_{k=1}^{p}\overline{w}_k\phi_k(x_i)+ \epsilon_i,
\end{equation}
where $(\overline{w_k})_{1\leq k\leq p} \in\mathbb{R}^p$ and $\epsilon_i$ is an additive noise $\epsilon_i\sim \mathcal{N}(0,0.3)$.

We will consider two different choices for the  dictionary of functions:  polynominals, i.e. 
$(\forall k\in \{1,\ldots,p\})$ $\phi_k\colon \left[-1,1\right]\to \RR$, $\phi_k(x)=x^{k-1}$
and trigonometric functions, i.e. $p=2q+1$ and  
$(\forall k\in \{1,\ldots,q\})\; \phi_k \colon \left[0,2\pi\right]\to \RR$, $\phi_k(x)=\cos((k-1)x)$ and 
$(\forall k\in \{q+1,\ldots,2q+1\})\; \phi_k \colon \left[0,2\pi\right]\to \RR$, $\phi_k(x)=\sin(kx)$.
The training set  and the regression function for the two examples are presented in  Figure~\ref{fig: orbit1pc1q1d}. 
\begin{figure}[!ht]
   \centering    
   \begin{tabular}{cc} 
 \includegraphics[width=6cm]{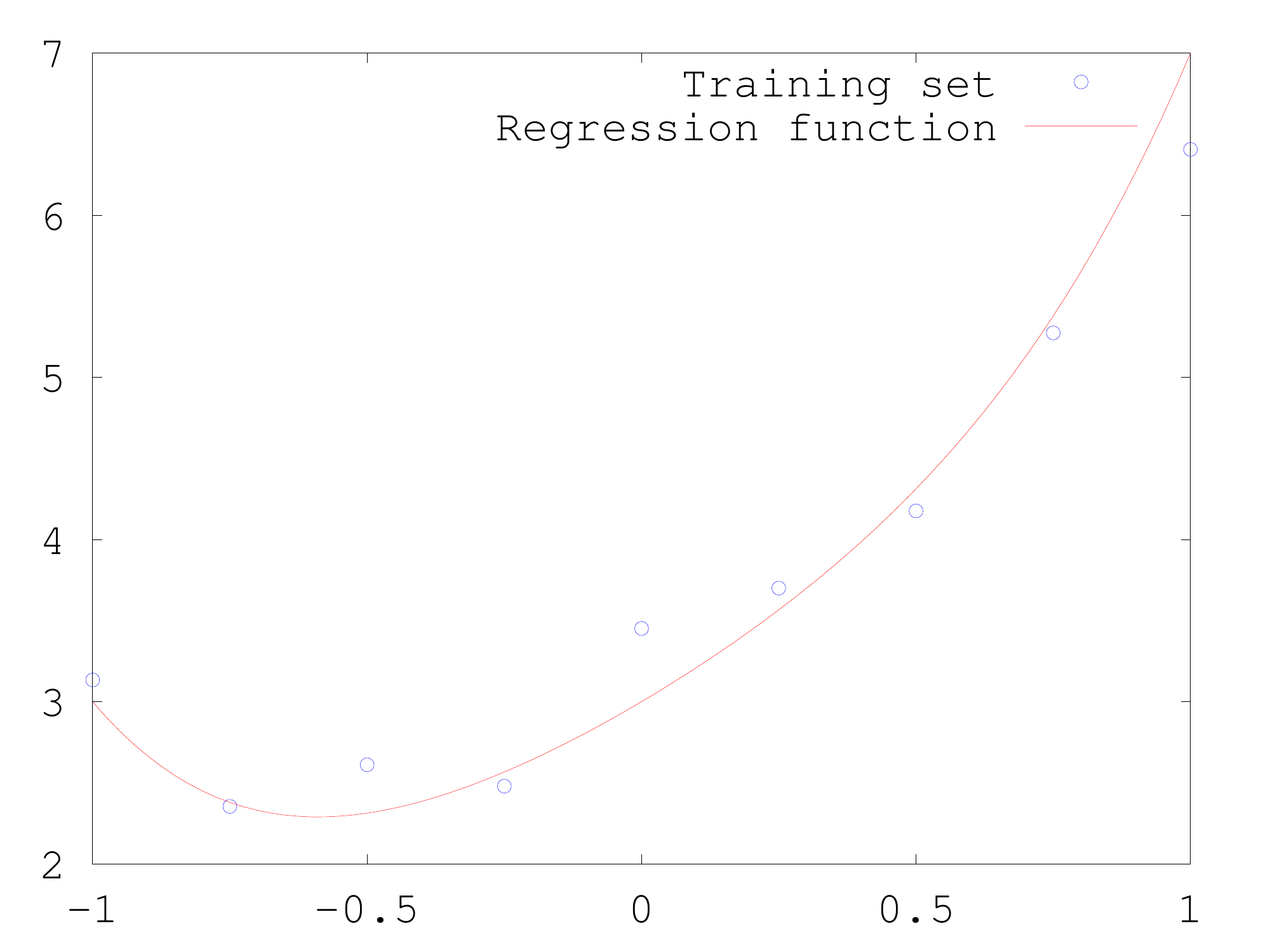}& \includegraphics[width=6cm]{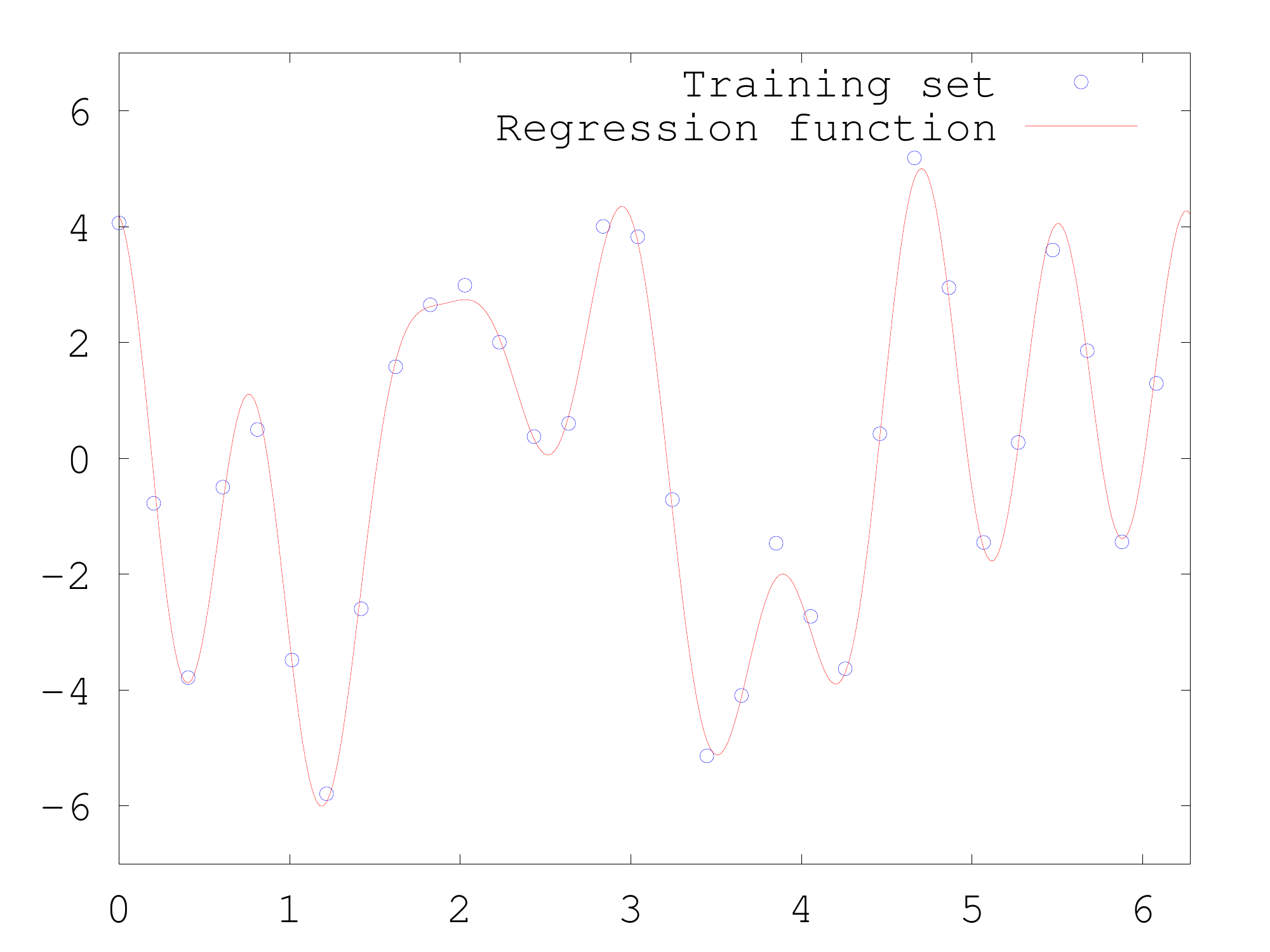}
 \end{tabular}
 \caption{Left: training set and regression function, on the interval $[-1,1]$ 
for the polynomial dictionary with $p=6$. Right: training set and regression function on $[0,2\pi]$
for the Fourier basis with $p=21$.
}
 \label{fig: orbit1pc1q1d} 
 \end{figure}
We estimate $\overline{w}$ by
 solving the following regularized minimization problem 
\begin{equation}
\label{e:ff1}
 \underset{ (w_k)_{1\leq k\leq p}\in\RR^p}{\text{minimize}} \;\frac{1}{2N}
\sum_{i=1}^N\Big(y_i -\sum_{k=1}^{p}w_k\phi_k(x_i)\Big)^2
+ \frac{1}{2}\sum_{k=1}^{p} (\mu|w_k|^2+ \omega|w_k|  \big),
\end{equation}
where $\mu$ and $\omega$ are strictly positive parameters.   
Problem  \eqref{e:ff1} is a special case of  Example \ref{Ex:SumOfFunctions}, and hence it can be 
solved by using SPG, SAGE, and FOBOS in an incremental fashion. 
For the polynomial dictionary, we set 
 \begin{alignat}{2} 
&p=6, \quad N=9,\quad \gamma_n = 15/(n+100), \quad  \eta_n = 15/(n+100),\quad 
\mu = 0.1,\quad \omega = 0.01, \notag\\
&\overline{w}=[3,2,1,0,1,0].
\end{alignat}
For the trigonometric dictionary, we set 
 \begin{alignat}{2} 
&p=21,\quad N=32,\quad \gamma_n=\eta_n = 10/(n+100),\quad 
\mu = 0.01,\quad \omega= 0.01, \notag\\
&\overline{w}=[0,0.2,0,0.5,1,-1,0,1,2,0.5,0,0,-0.1,-2.5,1,0,0,-1,0.9,-0.5,0].
\end{alignat}

The resulting regression functions using the three algorithms are shown in  Figure \ref{fig: orbit1pc1z1}.
As can be seen from visual inspection, the three methods provide almost undistinguishable solutions. 
\begin{figure}[!ht]
   \centering     
   \begin{tabular}{cc}
 \includegraphics[width=6cm]{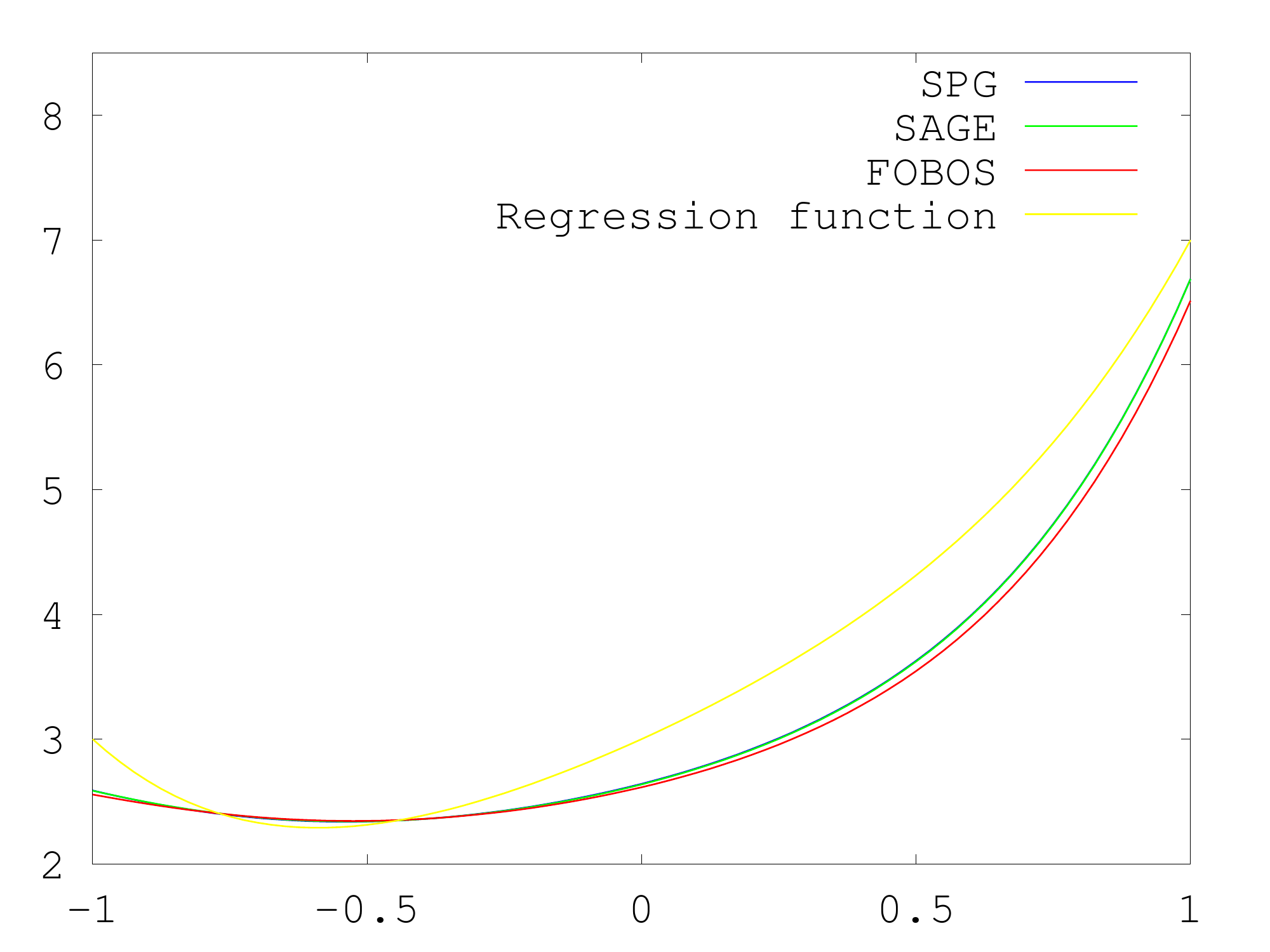}& \includegraphics[width=6cm]{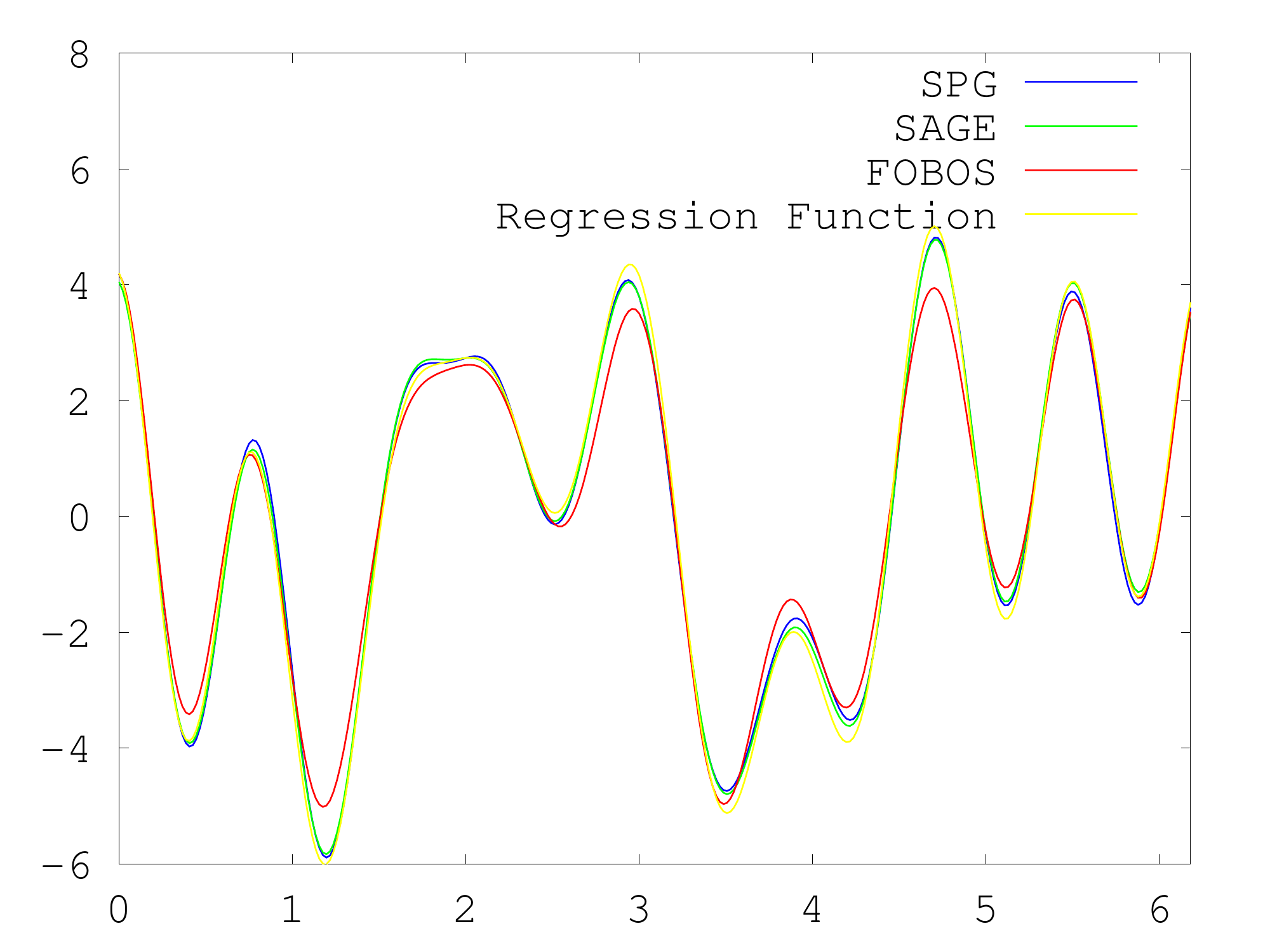}
 \end{tabular}
 \caption{Regression functions obtained using SPG, SAGE, and FOBOS with polynomial dictionary (left) and trigonometric dictionary (right).
 \label{fig: orbit1pc1z1} }
 \end{figure}

Finally, we computed an approximate solution of \eqref{e:ff1} by running the forward-backward 
splitting method in \cite{siam05}  for 50000 iterations. 
The convergence of  the iterations to the solution of \eqref{e:ff1} is displayed  in  Figure 
\ref{fig: orbit1pc1z1a}. On the regression problem with the polynomial dictionary, SAGE is performing the best, while
on the trigonometric dictionary, SPG is the fastest. The oscillating behavior is mitigated by the averaging procedure at the expanses of a slower convergence rate,
as the more regular behaviour of FOBOS clearly shows.
\begin{figure}[!ht]
   \centering     
   \begin{tabular}{cc}
 \includegraphics[width=6cm]{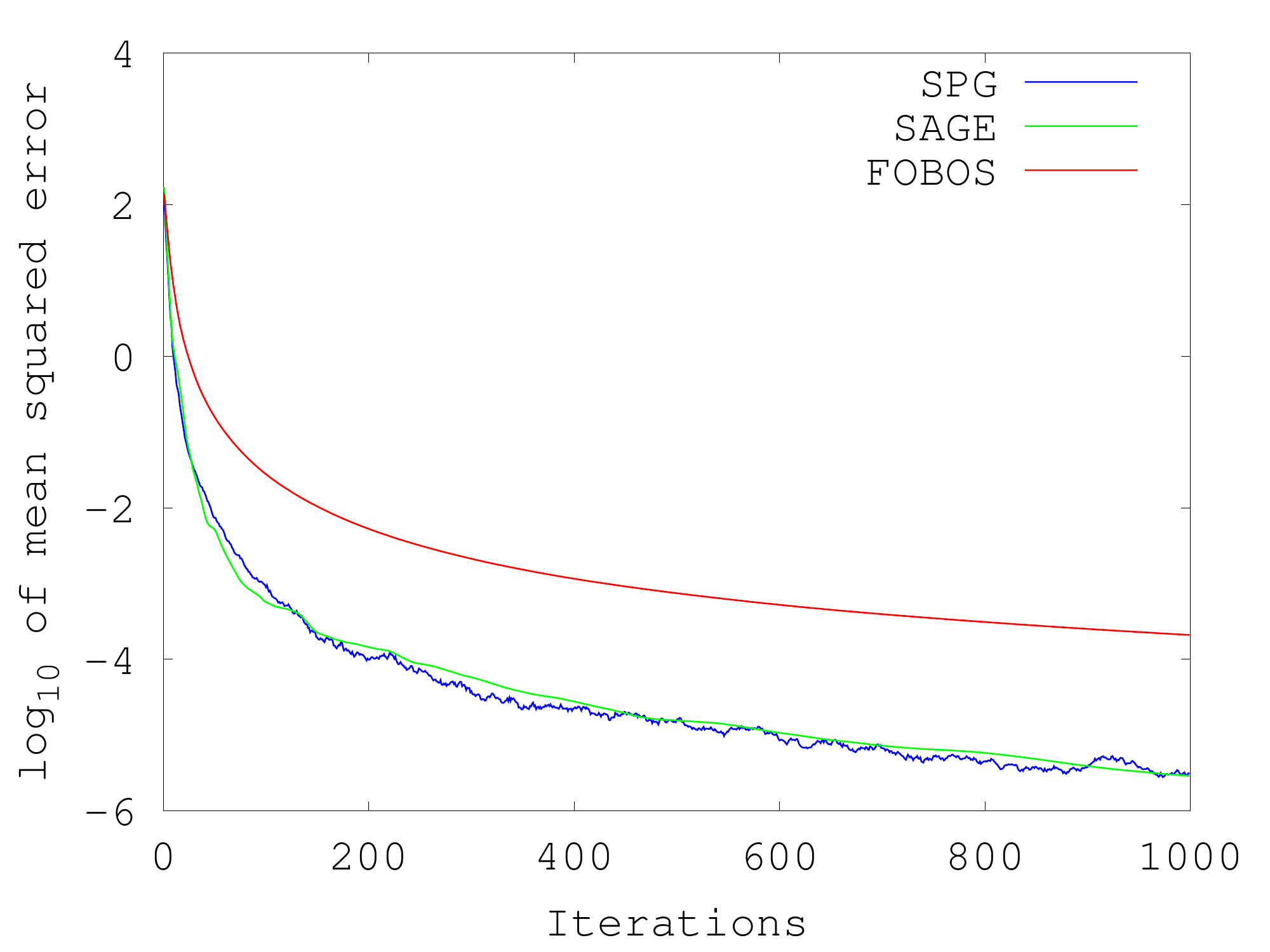}& \includegraphics[width=6cm]{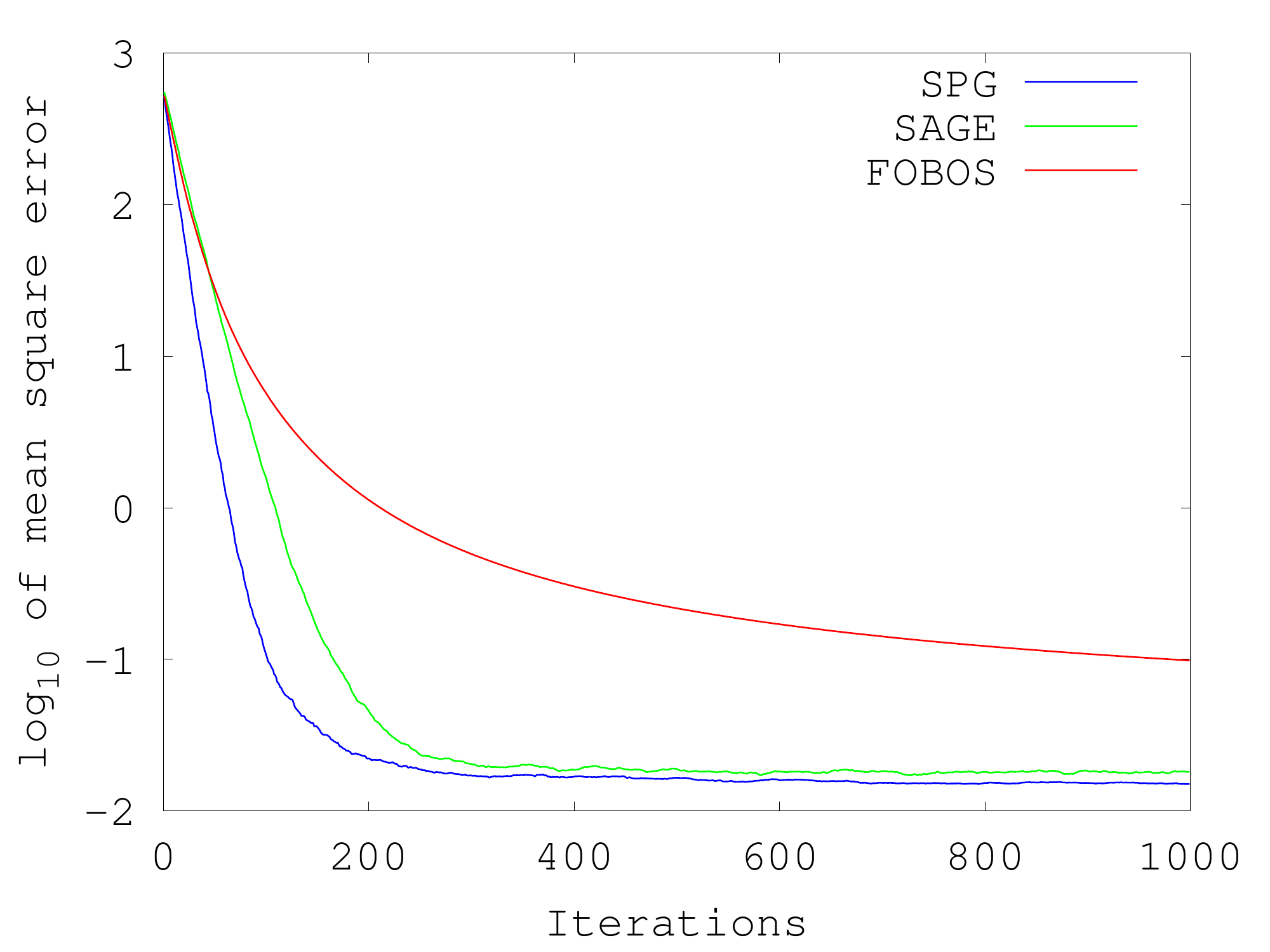}
 \end{tabular}
 \caption{The convergence of the iterations to the optimal solution of  \eqref{e:ff1} with polynomial dictionary (left)
 and with trigonometric dictionary (right).
 \label{fig: orbit1pc1z1a} }
 \end{figure}
\subsection{Deconvolution problems}
As a last experiment, we  focus on the problem of recovering an ideal signal $\overline{w}$ 
from a noisy observation of the form 
\begin{equation}\label{eq:den}
 \RR^{1024}\ni y = h*\overline{w} + s,
\end{equation}
where $s\sim \mathcal{N}(0,0.06)$ and $h$ is a Gaussian kernel.
\begin{figure}[!ht]
   \centering     
 \includegraphics[width=8cm]{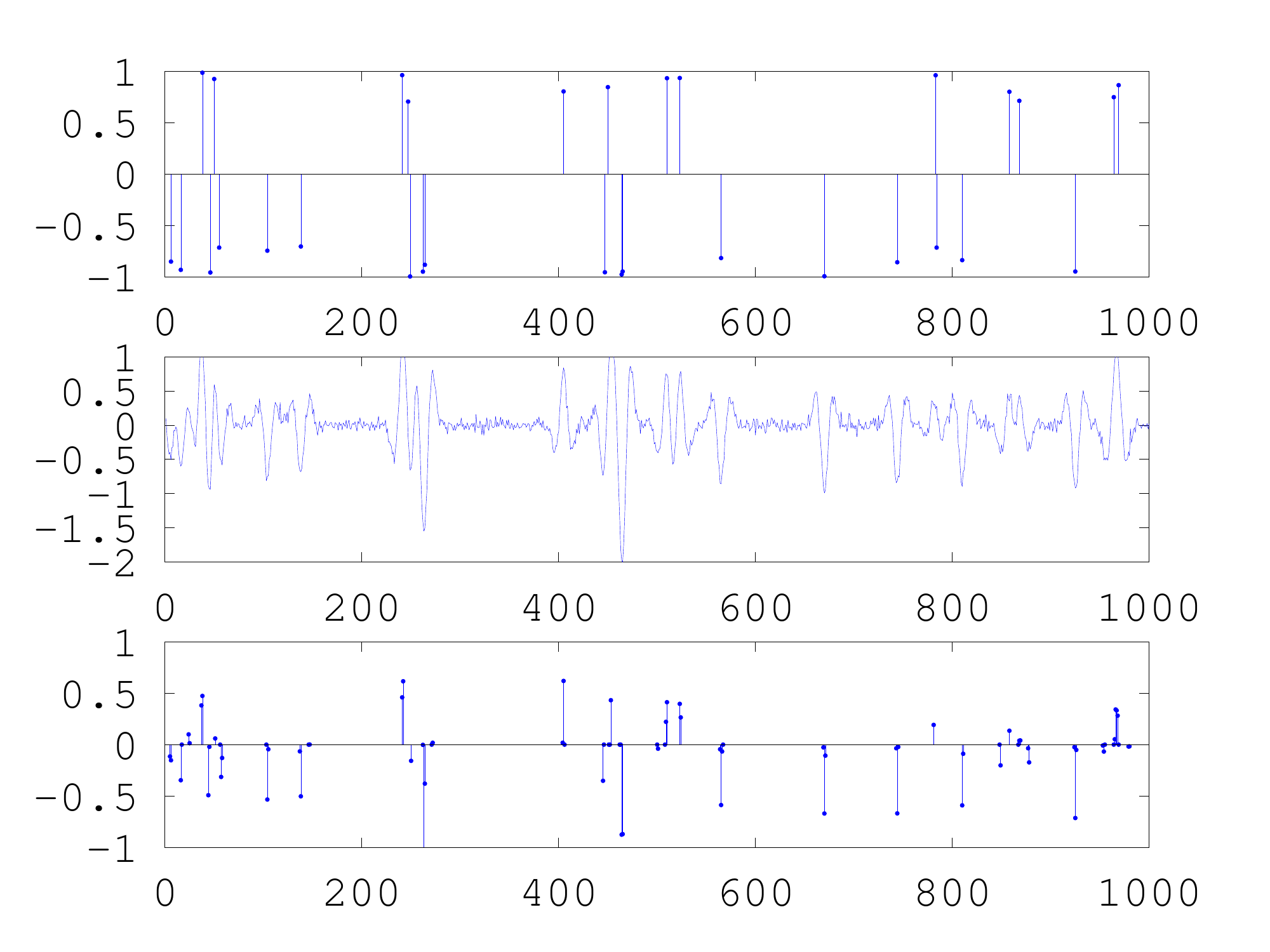}
 \caption{The ideal signal (top), the noisy signal (middle), and the restored signal (bottom) by SPG. 
 \label{fig: orbit1pc5} }
 \end{figure}
To find an approximation of the ideal signal, we solve the following variational 
problem 
\begin{equation}
\label{e:cyc2}
  \underset{w\in  \RR^{1024}}{\text{minimize}}\; T(w),\quad\quad T(w)=\frac{1}{2}\|y-h*w\|^2 
+ \|w\|_1 + \frac{0.02}{2}  \|w\|^{2}_2.
\end{equation}
An approximation $\overline{w}$ of the exact solution is found by running the forward-backward splitting method in \cite{siam05} 
for $10000$ iterations. Then,
we run SPG, SAGE, and FOBOS with the same initialization, for $5000$ iterations using at the $n$-th iteration a stochastic gradient of the form
\begin{equation}
 \mathrm{G}_n = \nabla\smo(w_n) + s_n, 
\end{equation}
where $s_n\sim \mathcal{N}(0,0.01)$ and  $\smo(w) = \frac{1}{2}\|y-h*w\|^2 + \frac{0.02}{2}  \|w\|^{2}_2 $. 
FOBOS is run with $\eta_n = 3/(n+100)$. This is not the theoretically optimal choice, but gave better
results in practice. 
In SPG we set $\lambda_n=1$ and $\gamma_n=3/(n+100)$.
Convergence of $(\|w_n-\overline{w}\|)_{n\in\NN^*}$ for the three algorithms is presented in Figure ~\ref{fig: orbit1pc1}.
In this case SAGE is the fastest, and SPG shows slightly worse convergence. FOBOS is again slower.  
\begin{figure}[!ht]
   \centering     
 \includegraphics[width=8cm]{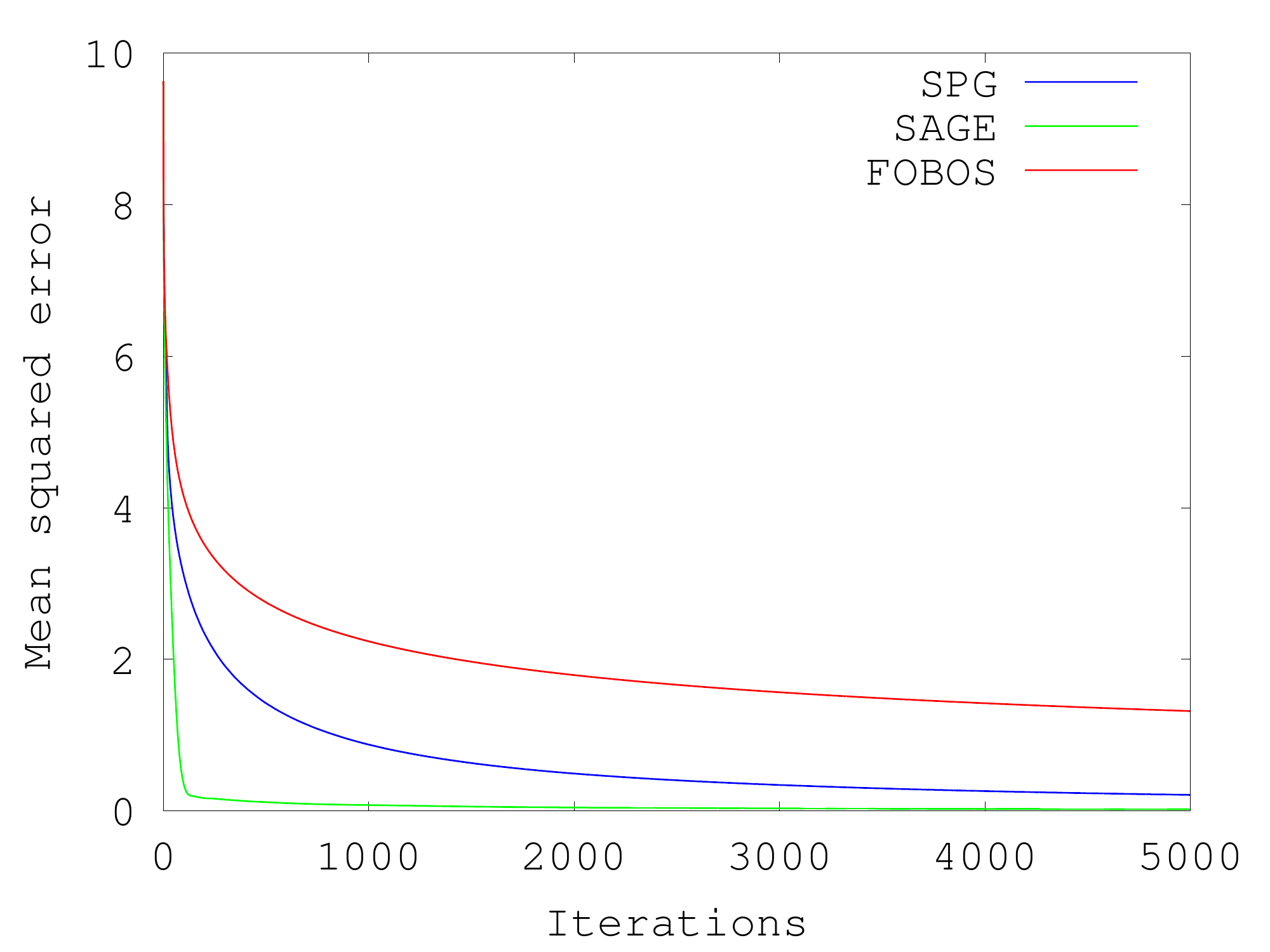}
 \caption{Convergence of the iterates for SPG, SAGE, and FOBOS with starting point 0.
 \label{fig: orbit1pc1} }
 \end{figure}

Finally, we address the problem of the iterations' sparsity. We generate the data according to the model in \eqref{eq:den},
starting from an original signal with $993$ zero components.
In Figure \ref{fig: orbit1pc1a}  we display the number of zero components
of the iterates. As it can be readily seen by visual inspection, after few iteratons both SAGE ans SPG
generate sparse iterations. On this example this does not hold for the FOBOS algorithm, for which
the sparsity of the iterates is  a decreasing function of the number of iterations.  
The number of zero components of  the last iterate of SPG, SAGE, and FOBOS is 937, 937, and 438, respectively.
\begin{figure}[!ht]
   \centering     
 \includegraphics[width=8cm]{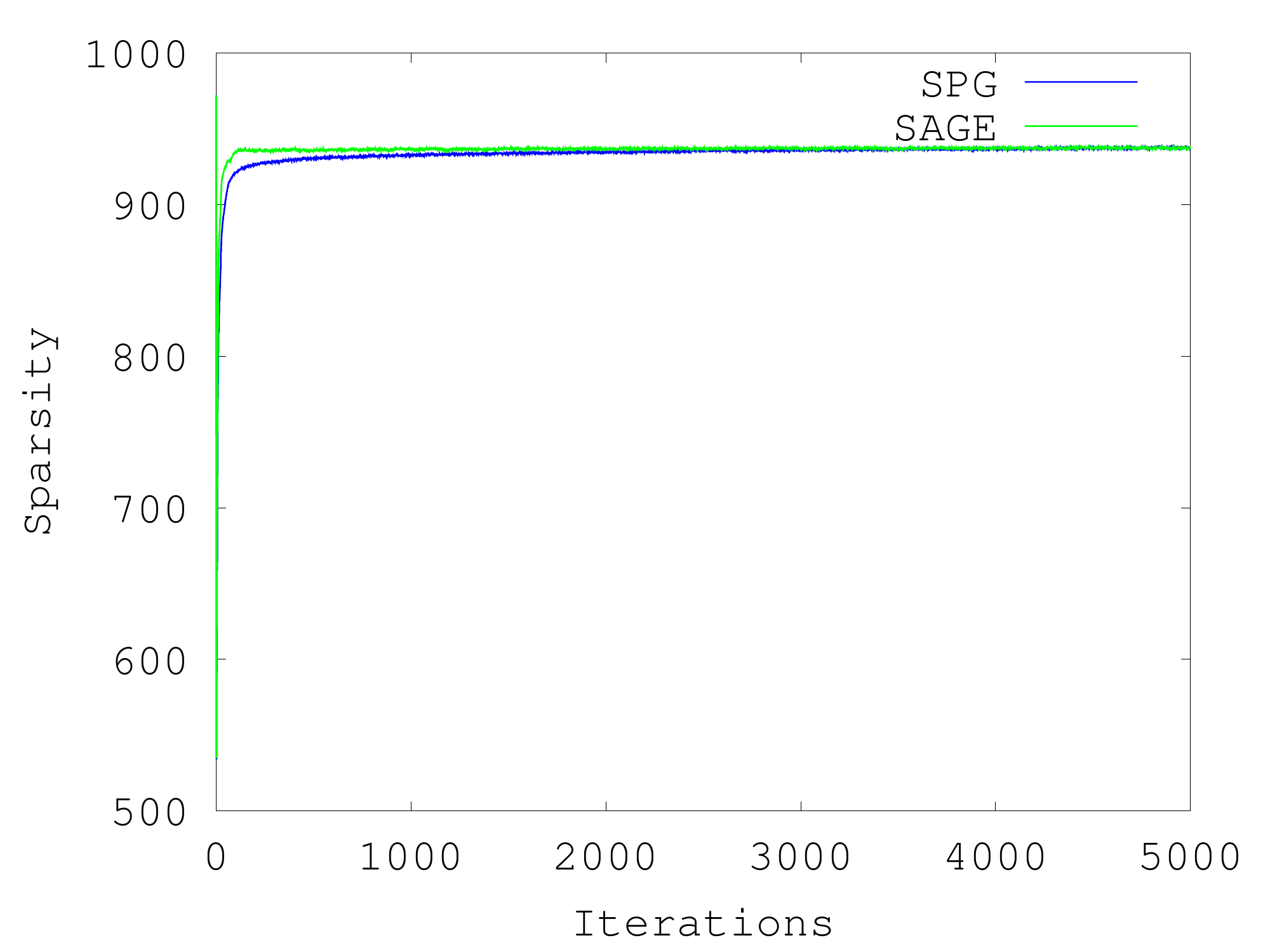}
 \caption{Number of zero components of the vector $(w_{n}-\overline{w})_{n\in\NN^*}$ with the same initial point $0$ for
 SPG and SAGE. 
 \label{fig: orbit1pc1a} }
 \end{figure}

\section{Conclusion}
In this paper we proposed and studied  a stochastic  approach to the problem of minimizing a  strongly convex non-smooth function. In particular, we have considered the case of composite minimization (the objective function is the sum of a smooth and a convex term) and proposed a stochastic extension of proximal splitting methods which have become widely popular in a deterministic setting. These latter approaches are based on recursively  computing the gradient 
of the smooth term and then applying the proximity operator defined by the convex term. 
The starting point of the paper is considering the case where only stochastic estimates of the 
gradient are available. This latter situation is relevant in online approaches to learning and more generally in stochastic and incremental optimization. 

The main contributions of this paper are to provide  convergence rates in expectation and to establish almost sure convergence of the proposed stochastic proximal gradient method.
A further contribution regards our proving techniques, which differ from many previous approaches based on regret analysis and online-to batch conversion. Indeed, our approach is based on extending convex optimization techniques from the deterministic to the stochastic setting.  Such extensions are interesting in their own right and could lead to further applications.
An outcome of our analysis is that,  unlike in the deterministic case, in the stochastic case acceleration does not yield any improvement in the rates.

The analysis in the paper suggests a few venues for future work.  A main one is the relaxation of  the strong convexity assumption, considering in particular objective functions  which are only convex; steps in this direction have been taken in \cite{ComPes14}.  Deriving high probability, rather than expectation bounds, would also be interesting.

{\small {\bf Acknowledgments.} This material is based upon work supported by the Center for Brains, Minds and Machines (CBMM), 
funded by NSF STC award CCF-1231216.
L. R. acknowledges the financial support of the Italian Ministry of Education, University and Research FIRB project RBFR12M3AC. 
S. V. is member of the Gruppo Nazionale per
l'Analisi Matematica, la Probabilit\`a e le loro Applicazioni (GNAMPA)
of the Istituto Nazionale di Alta Matematica (INdAM). }

\appendix
\section{Proofs of auxiliary results}\label{s:bg}

Here we prove the statements about random quasi-Fej\'er sequences and an upper bound
on a numerical sequence satisfying the assumptions of Lemma \ref{l:ics}.
\begin{proof}[of Proposition \ref{p:fejer}]
 It follows from \eqref{e:fejer} that 
\begin{equation}
\label{e:refejer}
(\forall n\in\NN^{*})(\forall w\in S)\quad \E[\|w_{n+1}-w\|^2] 
\leq \E[\|w_n-w\|^2] + \varepsilon_n \,.
\end{equation}

\ref{p:fejeri}: Since the sequence $(\varepsilon_n)_{n\in\NN^*}$ is summable and
$\E[\|w-w_1\|^2]$ is finite, we 
derive from \eqref{e:refejer} that $(\E[\|w_n-w\|^2])_{n\in\NN^*}$ is a real positive quasi-Fej\'er
sequence, and therefore it converges to some
$\zeta_{w}\in \RR$ by \cite[Lemma 3.1]{Com01}. Set 
\begin{equation}
(\forall n\in\NN^*)\quad r_n = \|w_n-w\|^2 + \sum_{k=n}^{\infty}\varepsilon_n.
\end{equation}
Then, it follows from \eqref{e:fejer} that
\begin{alignat}{2}
(\forall n\in\NN^*)\quad \E[r_{n+1}|\mathcal{A}_n] &=  \E[\|w_{n+1}-w\|^2  |\mathcal{A}_n] + \sum_{k=n+1}^{\infty}\varepsilon_n\notag\\
&\leq \|w_n-w\|^2 +  \sum_{k=n}^{\infty}\varepsilon_n\notag\\
&= r_n.
\end{alignat}
Therefore $(r_n)_{n\in\NN}$ is a (real) supermartingale. Since $ \sup_n \E[\min\{r_n, 0\}] < +\infty$
 by \eqref{e:refejer}, $r_n$ converges a.s to 
an integrable random variable \cite[Theorem 9.4]{Met82}, that we denote by $\xi_{w}$.

\ref{p:fejerii}\&\ref{p:fejeriii}: Follow directly by \ref{p:fejeri}.
\end{proof}

\begin{proof}[of Lemma \ref{l:ocs}]  
Note that, for every $m\in\NN^*$, $n\in\NN^*$, $m\leq n$:
\begin{equation} \label{eq:varineq}
 \sum_{k=m}^n k^{-\alpha} \geq \varphi_{1-\alpha}(n+1) -\varphi_{1-\alpha} (m),
\end{equation}
where $\varphi_{1-\alpha}$ is defined by \eqref{e:bach1}.
Since all terms in \eqref{e:iter} are positive for $n\geq n_0$, 
by applying the recursion $n-n_0$ times we have
 \begin{equation}\label{eq:rec}
 s_{n+1} \leq s_{n_0}\prod_{k=n_0}^n(1-\eta_k)  
+\tau \sum_{k=n_0}^n \prod_{i=k+1}^n(1-\eta_i)\eta_{k}^2. 
\end{equation}
Let us  estimate the first term in the right hand side of \eqref{eq:rec}. 
Since  $1-x\leq \exp(-x)$  for every $x\in\mathbb{R}$, from  \eqref{eq:varineq}, we derive
\begin{alignat}{2}
\label{e:conca}
\nonumber s_{n_0}\prod_{k=n_0}^n\left(1-\eta_k\right)  
= s_{n_0} \prod_{k=n_0}^n\Big(1-\frac{c}{k^{\alpha}}\Big)
&\leq s_{n_0} \exp\left(-c\sum_{k=n_0}^n k^{-\alpha} \right)\\ & \leq\begin{cases}
    s_{n_0}\big(\frac{n_0}{n+1}\big)^{c}& \text{if $\alpha = 1$},\\
    \\[-2ex]
s_{n_0} \exp\Big(\frac{c}{1-\alpha}(n_{0}^{1-\alpha} - (n+1)^{1-\alpha}) \Big)
& \text{if $0< \alpha < 1$.}
   \end{cases}
\end{alignat}
To estimate the second term in the right hand side of \eqref{eq:rec}, let us first consider the case $\alpha < 1$,
let $m\in\mathbb{N}\setminus\{0\}$ such that $n_0\leq n/2 \leq  m+1 \leq (n+1)/2$. We have 
\begin{alignat}{2}
 \sum_{k= n_0}^n \prod_{i=k+1}^n&(1-\eta_i)\eta_{k}^2 =\sum_{k= n_0}^m \prod_{i=k+1}^n(1-\eta_i)\eta_{k}^2 +\sum_{k=m+1}^n \prod_{i=k+1}^n(1-\eta_i)\eta_{k}^2 \notag\\
&\leq \exp\big(-\sum_{i=m+1}^n\eta_i\big)\sum_{k=n_0}^m\eta_{k}^{2} + {\eta_m} \sum_{k=m+1}^n \left(\prod_{i=k+1}^n(1-\eta_i)-\prod_{i=k}^n(1-\eta_i)\right)\notag\\ 
&=\exp\big(-\sum_{i=m+1}^n\eta_i\big)\sum_{k=n_0}^m\eta_{k}^{2} + {\eta_m}  \left(1-\prod_{i=m+1}^n(1-\eta_i)\right)\notag\\ 
&\leq \exp\big(-\sum_{i=m+1}^n\eta_i\big)\sum_{k=n_0}^m\eta_{k}^{2} +\eta_m\notag\\
&\leq 
c^2\exp\Big(\frac{c}{1-\alpha}( (m+1)^{1-\alpha} - (n+1)^{1-\alpha}) \Big)
\varphi_{1-2\alpha}(n) + \eta_m\\
\label{eq:rhss}&\leq 
c^2\exp\Big(\frac{-ct(n+1)^{1-\alpha}}{1-\alpha} \Big)
\varphi_{1-2\alpha}(n) + \frac{2^{\alpha} c }{\mu(n-2)^{\alpha}}.
\end{alignat}
Hence, combining \eqref{e:conca} and \eqref{eq:rhss}, for $\alpha\in\left]0,1\right[$ we get
\begin{alignat}{2}
 \quad 
s_{n+1} &\leq \Big(\tau c^2 \varphi_{1-2\alpha}(n)
 + s_{n_0}\exp\Big(\frac{cn_{0}^{1-\alpha}}{1-\alpha}\Big) \Big)
\exp\Big(\frac{-ct(n+1)^{1-\alpha}}{1-\alpha} \Big)
+ \frac{\tau 2^{\alpha}c}{(n-2)^{\alpha}}.
\end{alignat}
We next estimate the second term  in the right hand side of \eqref{eq:rec} in the case $\alpha =1$. We have
\begin{alignat}{2}
 \sum_{k= n_0}^n \prod_{i=k+1}^n(1-\eta_i)\eta_{k}^2 
&=  \frac{c^2}{(n+1)^{c}}\Big(1+ \frac{1}{n_0}\Big)^{c} 
\sum_{k= n_0}^{n}\frac{1}{k^{2-c}} 
\leq   \frac{ c^2}{(n+1)^{c}}\Big(1+ \frac{1}{n_0}\Big)^{c}\varphi_{c-1}(n).
 \notag\\
\end{alignat}
Therefore,  for $\alpha =1$, we obtain,
\begin{equation}
 s_{n+1} \leq s_{n_0}\Big(\frac{n_0}{n+1}\Big)^{c}+
 \frac{\tau c^2}{(n+1)^{c}}\Big(1+ \frac{1}{n_0}\Big)^{c} \varphi_{c-1}(n),
\end{equation}
which completes the proof.
\end{proof}


\begin{thebibliography}{10}

\bibitem{AtcForMou14}
Y.~F. Atchade, G.~Fort, and E.~Moulines.
\newblock On stochastic proximal gradient algorithms.
\newblock {\em arXiv:1402.2365}, February 2014.

\bibitem{bach}
F.~Bach and E.~Moulines.
\newblock Non-asymptotic analysis of stochastic approximation algorithms for
  machine learning.
\newblock {\em Proceedings NIPS}, 2011.

\bibitem{Barty07}
K.~Barty, J.-S. Roy, and C.~Strugarek.
\newblock Hilbert-valued perturbed subgradient algorithms.
\newblock {\em Math. Oper. Res.}, 32(3):551--562, 2007.

\bibitem{livre1}
H.~H. Bauschke and P.~L. Combettes.
\newblock {\em Convex analysis and monotone operator theory in {H}ilbert
  spaces}.
\newblock CMS Books in Mathematics/Ouvrages de Math{\'e}matiques de la SMC.
  Springer, New York, 2011.
\newblock With a foreword by H{\'e}dy Attouch.

\bibitem{beck09}
A.~Beck and M.~Teboulle.
\newblock A fast iterative shrinkage-thresholding algorithm for linear inverse
  problems.
\newblock {\em SIAM J. Imaging Sci.}, 2(1):183--202, 2009.

\bibitem{Bennar07}
A.~Bennar and J.-M. Monnez.
\newblock Almost sure convergence of a stochastic approximation process in a
  convex set.
\newblock {\em Int. J. Appl. Math.}, 20(5):713--722, 2007.

\bibitem{Ben90}
A.~Benveniste, M.~M{\'e}tivier, and P.~Priouret.
\newblock {\em Adaptive algorithms and stochastic approximations}, volume~22 of
  {\em Applications of Mathematics (New York)}.
\newblock Springer-Verlag, Berlin, 1990.
\newblock Translated from the French by Stephen S. Wilson.

\bibitem{Bert97}
D.~P. Bertsekas.
\newblock A new class of incremental gradient methods for least squares
  problems.
\newblock {\em SIAM Journal on Optimization}, 7(4):913--926, 1997.

\bibitem{Bert11}
D.~P. Bertsekas.
\newblock Incremental gradient, subgradient, and proximal methods for convex
  optimization: a survey.
\newblock {\em Optimization for Machine Learning}, page~85, 2011.

\bibitem{Bert00}
D.~P. Bertsekas and J.~N. Tsitsiklis.
\newblock Gradient convergence in gradient methods with errors.
\newblock {\em SIAM J. Optim.}, 10(3):627--642 (electronic), 2000.

\bibitem{bottou2005line}
L.~Bottou and Y.~{Le Cun}.
\newblock On-line learning for very large data sets.
\newblock {\em Applied Stochastic Models in Business and Industry},
  21(2):137--151, 2005.

\bibitem{BulVan11}
Peter B{\"u}hlmann and Sara van~de Geer.
\newblock {\em Statistics for high-dimensional data}.
\newblock Springer Series in Statistics. Springer, Heidelberg, 2011.
\newblock Methods, theory and applications.

\bibitem{Com01}
P.~L. Combettes.
\newblock Quasi-{F}ej{\'e}rian analysis of some optimization algorithms.
\newblock In {\em Inherently parallel algorithms in feasibility and
  optimization and their applications ({H}aifa, 2000)}, volume~8 of {\em Stud.
  Comput. Math.}, pages 115--152. North-Holland, Amsterdam, 2001.

\bibitem{ComPes14}
P.~L. Combettes and J.-C. Pesquet.
\newblock Stochastic quasi-fej\'er block-coordinate fixed point iterations with
  random sweeping, 2014.

\bibitem{siam07}
P.~L. Combettes and Jean-Christophe Pesquet.
\newblock Proximal thresholding algorithm for minimization over orthonormal
  bases.
\newblock {\em SIAM J. Optim.}, 18(4):1351--1376, 2007.

\bibitem{Com11}
P.~L. Combettes and Jean-Christophe Pesquet.
\newblock Proximal splitting methods in signal processing.
\newblock In {\em Fixed-point algorithms for inverse problems in science and
  engineering}, volume~49 of {\em Springer Optim. Appl.}, pages 185--212.
  Springer, New York, 2011.

\bibitem{siam05}
P.~L. Combettes and Val{\'e}rie~R. Wajs.
\newblock Signal recovery by proximal forward-backward splitting.
\newblock {\em Multiscale Model. Simul.}, 4(4):1168--1200 (electronic), 2005.

\bibitem{DDR09}
C.~De~Mol, E.~De~Vito, and L.~Rosasco.
\newblock Elastic-net regularization in learning theory.
\newblock {\em J. Complexity}, 25:201--230, 2009.

\bibitem{De11}
O.~Devolder.
\newblock Stochastic first order methods in smooth convex optimization.
\newblock Technical report, Center for Operations Research and econometrics,
  2011.

\bibitem{Duchi09}
J.~Duchi and Y.~Singer.
\newblock Efficient online and batch learning using forward backward splitting.
\newblock {\em J. Mach. Learn. Res.}, 10:2899--2934, 2009.

\bibitem{Duret04}
R.~Durrett.
\newblock {\em Probability: theory and examples}.
\newblock Cambridge Series in Statistical and Probabilistic Mathematics.
  Cambridge University Press, Cambridge, fourth edition, 2010.

\bibitem{Ermol68}
Yu.~M. Ermol'ev and A.~D. Tuniev.
\newblock Random fej{\'e}r and quasi-fej{\'e}r sequences.
\newblock {\em Theory of Optimal Solutions-- Akademiya Nauk Ukrainsko\u{\i} SSR
  Kiev}, 2:76--83, 1968.

\bibitem{Ghadimi12}
S.~Ghadimi and G.~Lan.
\newblock Optimal stochastic approximation algorithms for strongly convex
  stochastic composite optimization i: A generic algorithmic framework.
\newblock {\em SIAM Journal on Optimization}, 22(4):1469--1492, 2012.

\bibitem{HazAgaKal07}
E.~Hazan, A.~Agarwal, and S.~Kale.
\newblock Logarithmic regret algorithms for online convex optimization.
\newblock {\em Machine Learning}, 69(2-3):169--192, 2007.

\bibitem{hazan}
E.~Hazan and S.~Kale.
\newblock Beyond the regret minimization barrier: an optimal algorithm for
  stochastic strongly-convex optimization.
\newblock {\em Journal of Machine Learning Research}, 15:2489--2512, 2014.

\bibitem{Jud14}
A.~Juditski and Y.~Nesterov.
\newblock Primal-dual subgradient methods for minimizing uniformly convex
  functions.
\newblock {\em arXiv preprint arXiv:1401.1792}, 2014.

\bibitem{Jud11}
A.~Juditsky, A.~Nemirovski, and C.~Tauvel.
\newblock Solving variational inequalities with stochastic mirror-prox
  algorithm.
\newblock {\em Stoch. Syst.}, 1(1):17--58, 2011.

\bibitem{Hu}
J.~T. Kwok, {C. Hu}, and W.~Pan.
\newblock Accelerated gradient methods for stochastic optimization and online
  learning.
\newblock In {\em Advances in Neural Information Processing Systems},
  volume~22, pages 781--789, 2009.

\bibitem{Lan09}
G.~Lan.
\newblock An optimal method for stochastic composite optimization.
\newblock {\em Math. Program.}, 133(1-2, Ser. A):365--397, 2012.

\bibitem{LinChePen14}
Q.~Lin, X.~Chen, and J.~Pe\~{n}a.
\newblock A sparsity preserving stochastic gradient methods for sparse
  regression.
\newblock {\em Computational Optimization and Applications}, to appear, 2014.

\bibitem{Met82}
M.~M{\'e}tivier.
\newblock {\em Semimartingales}, volume~2 of {\em de Gruyter Studies in
  Mathematics}.
\newblock Walter de Gruyter \& Co., Berlin, 1982.
\newblock A course on stochastic processes.

\bibitem{Monnez06}
J.-M. Monnez.
\newblock Almost sure convergence of stochastic gradient processes with matrix
  step sizes.
\newblock {\em Statist. Probab. Lett.}, 76(5):531--536, 2006.

\bibitem{MRSVV10}
S.~Mosci, L.~Rosasco, M.~Santoro, A.~Verri, and S.~Villa.
\newblock Solving structured sparsity regularization with proximal methods.
\newblock In {\em Machine Learning and Knowledge discovery in Databases
  European Conference, ECML PKDD 2010}, pages 418--433, Barcelona, Spain, 2010.
  Springer.

\bibitem{Nem09}
A.~Nemirovski, A.~Juditsky, G.~Lan, and A.~Shapiro.
\newblock Robust stochastic approximation approach to stochastic programming.
\newblock {\em SIAM J. Optim.}, 19(4):1574--1609, 2008.

\bibitem{Nem83}
A.~Nemirovski and D.~Yudin.
\newblock {\em Problem Complexity and Method Efficiency in Optimization}.
\newblock Wiley-Intersci. Ser. Discrete Math. 15. John Wiley, New York, 1983.

\bibitem{nesterov07}
Y.~Nesterov.
\newblock Gradient methods for minimizing composite objective function.
\newblock CORE Discussion Paper 2007/76, Catholic University of Louvain,
  September 2007.

\bibitem{Pol87}
B.~Polyak.
\newblock {\em Introduction to Optimization}.
\newblock Optimization Software, New York, 1987.

\bibitem{PolJud92}
B.~T. Polyak and A.~B. Juditsky.
\newblock Acceleration of stochastic approximation by averaging.
\newblock {\em SIAM J. Control Optim.}, 30(4):838--855, 1992.

\bibitem{RakShaSri12}
A.~Rakhlin, O.~Shamir, and K.~Sridaran.
\newblock Making gradient descent optimal for strongly convex stochastic
  optimization.
\newblock In {\em Proceedings of the 29th International Conference on Machine
  Learning}, 2012.

\bibitem{RobMon51}
H.~Robbins and S.~Monro.
\newblock A stochastic approximation method.
\newblock {\em Ann. Math. Statistics}, 22:400--407, 1951.

\bibitem{Roc76}
R.~T. Rockafellar.
\newblock Monotone operators and the proximal point algorithm.
\newblock {\em SIAM J. Control Optimization}, 14(5):877--898, 1976.

\bibitem{SRB11}
M.~W. Schmidt, N.~Le Roux, and F.~Bach.
\newblock Convergence rates of inexact proximal-gradient methods for convex
  optimization.
\newblock In {\em NIPS}, pages 1458--1466, 2011.

\bibitem{Salev07}
S.~Shalev-Shwartz, Y.~Singer, and N.~Srebro.
\newblock Pegasos: Primal estimated sub-gradient solver for {SVM}.
\newblock In {\em Proceedings ICML}, 2007.

\bibitem{Shalev08}
S.~Shalev-Shwartz and N.~Srebro.
\newblock {SVM} optimization: inverse dependence on training set size.
\newblock In {\em Proceedings ICML}, 2008.

\bibitem{ShaZha12}
O.~Shamir and T.~Zhang.
\newblock Stochastic gradient descent for non-smooth optimization: Convergence
  results and optimal averaging schemes.
\newblock In {\em Proceedings of the 30th International Conference on Machine
  Learning}, 2013.

\bibitem{glopridu}
S.~Villa, L.~Rosasco, S.~Mosci, and A.~Verri.
\newblock Proximal methods for the latent group lasso penalty.
\newblock {\em Comput. Optim. Appl.}, 58(2):381--407, 2014.

\bibitem{VilSal13}
S.~Villa, S.~Salzo, L.~Baldassarre, and A.~Verri.
\newblock Accelerated and inexact forward-backward algorithms.
\newblock {\em SIAM J. Optim.}, 23(3):1607--1633, 2013.

\bibitem{Xiao}
L.~Xiao.
\newblock Dual averaging methods for regularized stochastic learning and online
  optimization.
\newblock {\em J. Mach. Learn. Res.}, 11:2543--2596, 2010.

\bibitem{zhang2008multi}
T.~Zhang.
\newblock Multi-stage convex relaxation for learning with sparse
  regularization.
\newblock In {\em Advances in Neural Information Processing Systems}, pages
  1929--1936, 2008.

\bibitem{Zin03}
M.~Zinkevich.
\newblock Online convex programming and generalized infinitesimal gradient
  ascent.
\newblock In {\em Proceedings ICML}, 2003.

\bibitem{Zou05}
Z.~Zou and T.~Hastie.
\newblock Regularization and variable selection via the elastic net.
\newblock {\em Journal of the Royal Statistical Society, Series B},
  67:301--320, 2005.

\end{thebibliography}
\end{document}